\newtheorem{thm}{Theorem}
\newtheorem{introthm}{Theorem}
\newtheorem{introcor}[introthm]{Corollary}
\newtheorem{lem}[thm]{Lemma}
\newtheorem{cor}[thm]{Corollary}
\newtheorem{prop}[thm]{Proposition}
\theoremstyle{definition}
\newtheorem{defn}[thm]{Definition}
\newtheorem{rem}[thm]{Remark}
\numberwithin{thm}{section}
\numberwithin{equation}{section}
\newfont{\cyrr}{wncyr10}
\def\Sh{\mbox{\cyrr Sh}}
\def\Z{\mathbf{Z}}
\def\Q{\mathbf{Q}}
\def\F{\mathbf{F}}
\def\R{\mathbf{R}}
\def\bA{\mathbf{A}}
\def\Fp{\F_p}
\def\cS{\mathcal{S}}
\def\A{\mathcal{A}}
\def\O{\mathcal{O}}
\def\cP{\mathcal{P}}
\def\B{\mathcal{B}}
\def\ld{\mathcal{h}}
\def\rd{\mathcal{i}}
\def\l{\mathfrak{q}}
\def\p{\mathfrak{p}}
\def\D{\mathcal{D}}
\def\d{\mathfrak{d}}
\def\Hom{\mathrm{Hom}}
\def\Gal{\mathrm{Gal}}
\def\rk{\mathrm{rk}}
\def\Aut{\mathrm{Aut}}
\def\Sel{\mathrm{Sel}}
\def\End{\mathrm{End}}
\def\Res{\mathrm{Res}}
\def\Frob{\mathrm{Frob}}
\def\ur{\mathrm{ur}}
\def\GL{\mathrm{GL}}
\def\SL{\mathrm{SL}}
\def\ST{\Sh\mathrm{T}}
\def\loc{\mathrm{loc}}
\def\image{\mathrm{image}}
\def\Pic{\mathrm{Pic}}
\def\ram{\mathrm{ram}}
\def\even{\mathrm{even}}
\def\parity{\rho}
\def\rP{\mathbf{E}}
\def\Ceb{\mathcal{L}}
\def\rE#1{#1^+}
\def\rO#1{#1^-}
\def\Podd{\rO{\rP}}
\def\Peven{\rE{\rP}}
\def\N{\mathbf{N}}
\def\Kb{\bar{K}}
\def\too{\longrightarrow}
\def\map#1{\;\xrightarrow{#1}\;}
\def\isom{\xrightarrow{\sim}}
\def\hookto{\hookrightarrow}
\def\onto{\twoheadrightarrow}
\def\dirsum#1{\underset{#1}{\textstyle\bigoplus}}
\def\Hu{H^1_{\ur}}
\def\HF{H^1_{\cS}}
\def\bmu{\boldsymbol{\mu}}
\def\bq{\mathbf{q}}
\def\Fset{\mathcal{F}}
\def\Xset{\mathcal{C}}
\def\H{\mathcal{H}}
\def\sgn{\mathrm{sign}_\Delta}
\def\sgnd{\mathrm{sign}_{\Delta}}
\def\one{\mathbf{1}}
\def\iK{\bA_K^\times}
\def\metabolic{metabolic }
\def\dm#1{\| #1 \|}
\def\ML{M_L}
\def\PS{W}
\def\disparity{\delta}
\title[A Markov model for Selmer ranks in families of twists]{A Markov model 
for Selmer ranks \\ in families of twists}
\author{Zev Klagsbrun}
\address{Department of Mathematics, 
University of Wisconsin\,-\,Madison,
Madison, WI 53706, 
USA}
\email{\href{mailto:klagsbru@math.wisc.edu}{klagsbru@math.wisc.edu}}
\author{Barry Mazur}
\address{Department of Mathematics, 
Harvard University,
Cambridge, MA 02138, 
USA}
\email{\href{mailto:mazur@math.harvard.edu}{mazur@math.harvard.edu}}
\author{Karl Rubin}
\address{Department of Mathematics, 
UC Irvine,
Irvine, CA 92697, 
USA}
\email{\href{mailto:krubin@math.uci.edu}{krubin@math.uci.edu}}
\subjclass[2010]{Primary: 11G05, Secondary: 11G40, 60J10}
\thanks{This material is based upon work supported by the 
National Science Foundation under grants DMS-0700580, DMS-0757807, DMS-0968831, and DMS-1065904.  
Much of this work was carried out while the second and third authors were in residence at 
MSRI, and they would also like to thank MSRI for support and hospitality.}
\begin{document}

\begin{abstract}
We study the distribution of $2$-Selmer ranks in the family of quadratic twists of 
an elliptic curve $E$ over an arbitrary number field $K$. 
Under the assumption that $\Gal(K(E[2])/K) \cong S_3$ we show that the density 
(counted in a non-standard way) of twists with Selmer rank $r$ exists for all 
positive integers $r$, and is given via an equilibrium distribution, depending only on 
a single parameter (the ``disparity''), of a certain Markov process that is itself 
independent of $E$ and $K$.
More generally, our results also apply to $p$-Selmer ranks of twists of $2$-dimensional 
self-dual ${\bf F}_p$-representations of the absolute Galois group of $K$ by characters 
of order $p$.
\end{abstract}

\maketitle

\setcounter{tocdepth}{1}
\tableofcontents

\section*{Introduction}

There has been much recent interest in the arithmetic statistics related to the 
class of all elliptic curves over a given number field. For example, there are 
the spectacular results due to Bhargava and Shankar \cite{bs1,bs2} over ${\Q}$. 
There are also precise and extensive statistical conjectures (cf. \cite{poonenrains,bklpr}) 
proposing that density distributions of ranks of $p$-Selmer groups 
are given by equilibrium distributions arising from certain Markov processes.

This article deals with the statistical shape of the ranks of $2$-Selmer groups 
in the family of quadratic twists of a given elliptic curve $E$ over a given number field 
$K$ (that is, twists of $E$ by all quadratic characters of $K$). 

Define the {\em disparity} $\disparity(E/K)$ 
of such a family to be the difference between $1/2$ and 
the density of the members with even $2$-Selmer rank. We showed in \cite[Theorem 7.6]{kmr} that 
when one orders the members of such a quadratic twist family in a certain natural 
way, this disparity---i.e., such a ``density"---exists, and we gave an example 
of a curve $E$ such that, as $K$ varies, the disparity 
takes on a dense set of values in its allowable range 
$[-\frac{1}{2}, {\frac{1}{2}}]$. 
(On the other hand, when $K = \Q$ the disparity is always zero.)  
Conjecturally, then, this would also imply the same 
facts for Mordell-Weil ranks of the members of these families. 

\subsection*{Our main result}
This paper is a sequel to \cite{kmr}. We prove:

\begin{introthm}
\label{thmb}
Let $E$ be an elliptic curve over a number field $K$ with
$$
\Gal(K(E[2])/K) \cong S_3.
$$ 
For every $m \ge 0$ and $X > 0$ 
let $m \mapsto\B_m(X) = \cup_k \B_{m,k,X}$ be the ``fan-structure'' of 
collections of quadratic characters of $K$ as in Corollary \ref{11.11}.  
Then for every $r \ge 0$,
\begin{multline*}
\lim_{m \to \infty}\lim_{X \to \infty}\frac{|\{\chi\in\B_m(X) : 
   \dim_{\F_2}\Sel_2(E^\chi/K) = r\}|}{|\B_m(X)|} \\
   = \begin{cases}(\frac{1}{2} + \disparity(E/K))c_r&\text{if $r$ is odd},\\
   (\frac{1}{2}-\disparity(E/K))c_r&\text{if $r$ is even,}\end{cases}
\end{multline*}
where $c_r$ is the positive real number given by Definition \ref{10.3} with $p = 2$.
\end{introthm}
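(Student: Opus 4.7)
The plan is to exploit the ``fan-structure'' $\B_m(X) = \cup_k \B_{m,k,X}$ to reduce the density computation to the long-run behavior of a Markov chain whose state is the $2$-Selmer rank. The indexing variable $m$ should be thought of as a ``depth'' along each fan: a character $\chi \in \B_m(X)$ is built from a base character by adjoining extra ramification at $m$ auxiliary primes. The natural order of operations in the theorem is dictated by the order of the limits: first, for each fixed $m$, compute the distribution of Selmer ranks in $\B_m(X)$ as $X \to \infty$; second, send $m \to \infty$ and identify the limit with the equilibrium distribution of the relevant Markov chain.

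I would organize the argument in three stages. First, use the structural results of \cite{kmr} (together with Corollary \ref{11.11}) to show that, for each fixed base character $\chi$ and each single additional ramification step, there is a well-defined transition law on the Selmer rank: the probability that the rank jumps by $\pm 1$, or stays the same, as a random auxiliary prime is appended, depends only on the current rank and on the parity of $r$, and not on the particular $\chi$ or on the earlier history. This is the crucial ``memoryless'' input, and it is here that the hypothesis $\Gal(K(E[2])/K)\cong S_3$ is used, since in that setting the comparison formulas between $\Sel_2(E^\chi/K)$ and $\Sel_2(E^{\chi\chi_\ell}/K)$ have a uniform shape controlled only by local conditions at $\ell$. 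Second, invoke the results of \cite{kmr} on the disparity $\disparity(E/K)$ to conclude that, for $X \to \infty$, the proportion of $\chi \in \B_m(X)$ with rank $r$ converges to the distribution of the Markov chain after $m$ steps, started from the distribution at level zero and evolving under these transition probabilities.

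Third, analyze the chain itself. Because the transition probabilities between two states of the same parity (even$\to$even, odd$\to$odd) are strictly positive while the chain toggles parity only via $\pm 1$ jumps, the chain decomposes into an irreducible, positive-recurrent ``ladder'' process on $\Z_{\geq 0}$ once one conditions on parity. Standard detailed-balance computations produce a stationary distribution of the form $\pi(r) = \alpha_{\even} c_r$ or $\pi(r) = \alpha_{\odd} c_r$ depending on parity, where $c_r$ is exactly the constant of Definition \ref{10.3}. The parity weights $\alpha_{\even}$ and $\alpha_{\odd}$ are pinned down by the fact that the chain preserves the parity mass, which by \cite[Theorem 7.6]{kmr} equals $\tfrac{1}{2} - \disparity(E/K)$ on even ranks and $\tfrac{1}{2} + \disparity(E/K)$ on odd ranks. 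Combining this with the convergence in $m$ yields the claimed formula.

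The main obstacle, and the step where most of the real work lives, is the first stage: justifying that the effect of adjoining a random auxiliary prime to $\chi$ really is governed by a universal, rank-dependent transition law, with quantifiable error as $X \to \infty$. This requires both (a) a uniform local analysis at the auxiliary prime to compute how the Selmer rank changes in terms of the image of a global Selmer class in local cohomology, and (b) an equidistribution/Chebotarev-type input to show that auxiliary primes with each possible local behavior contribute the expected proportion to $\B_m(X)$. Once this Markovian input is in place, the computation of the equilibrium and of the parity-weighted formula is essentially bookkeeping against Definition \ref{10.3} and the disparity theorem of \cite{kmr}.
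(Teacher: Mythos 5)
Your three-stage architecture matches the paper's proof at a high level: establish a Markov transition law for the Selmer rank under adjoining one auxiliary prime (the paper's Propositions \ref{basic}, \ref{probs}, and Theorem \ref{gov}), convert fan-averages to Markov chain distributions (Theorem \ref{ybox} and Theorem \ref{6.5}), and identify the limit with the equilibrium distribution weighted by a parity constraint (Proposition \ref{10.5}, Corollary \ref{4.6}, Lemma \ref{lastlem}). Your intuitions about where the difficulty lies (Chebotarev equidistribution, uniformity of the transition law) are correct. However, two genuine gaps remain.

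First, your proposal works directly with global characters $\chi$, appending a random prime $\ell$ and comparing $\Sel_2(E^\chi)$ with $\Sel_2(E^{\chi\chi_\ell})$. The problem is that a global character $\chi_\ell$ ramified at $\ell$ has nontrivial restrictions at the places of $\Sigma$, and these restrictions are correlated with the Frobenius of $\ell$ in the various governing extensions. So the operation ``adjoin a random $\ell$'' does not cleanly decouple the new local condition at $\ell$ from the old conditions at $\Sigma$, and the claimed transition law does not hold as stated for global characters. The paper circumvents this by first studying \emph{incoherent} Selmer structures --- arbitrary tuples of local characters, one at each relevant place, with no global compatibility imposed --- and then using global class field theory (\S\ref{lgc}, Proposition \ref{cft2}) to pass back to genuine twists, crucially exploiting the ``free'' places in $\cP_0$ (primes where twisting does not change the Selmer rank) to adjust any incoherent package into a coherent one without affecting the rank. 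This local-to-global step is nontrivial (it is where the hypotheses \eqref{h2a} and \eqref{h2b} on $\Sigma$ are used) and your proposal omits it entirely.

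Second, your parity analysis is imprecise in a way that matters. The governing operator $\ML$ is \emph{parity-reversing}, not parity-preserving, so ``the chain preserves the parity mass'' is false as stated. The eventual parity split in the answer arises from a more delicate interplay: for each width $k$, Proposition \ref{cft2} shows that the image of global characters in local tuples hits exactly $\Omega_\d^+$ (if $w(\d)$ even) or $\Omega_\d^-$ (if $w(\d)$ odd), and Lemma \ref{lastlem} computes the initial parities $\parity(E_1^\pm) = \frac{1}{2}\mp\delta(A/K)$ using Kramer's formula. Coupled with the fact that $\ML^k$ sends a distribution of parity $\rho$ to one of parity $\rho$ or $1-\rho$ depending on the parity of $k$, one gets the same final parity weights for every $k$, and only then does the claimed formula emerge. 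Your ``parity conservation'' shortcut would, taken literally, give the wrong answer or no answer at all; it needs to be replaced by this more careful bookkeeping between the parity-reversing operator and the parity-dependent choice of starting state $E_1^\pm$.
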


In other words, the only parameter needed to fully describe the distribution of $2$-Selmer ranks in 
the family of quadratic twists of $E$ (at least in the case when $\Gal(K(E[2])/K) \cong S_3$)
is the disparity $\disparity(E/K)$.  
A similar result, with the same constants $c_r$ (but where the disparity 
$\disparity(E/K)$ is necessarily $0$)  was obtained by 
Swinnerton-Dyer \cite{sw-d} in the case where the number field was 
${\Q}$ and the Galois action on $2$-torsion was trivial.

\subsection*{Fan structure}
In section \ref{avgs} below we define the set of {\it levels} $\D$ 
(eventually associated to quadratic characters) for the field $K$ and we axiomatize an assignment of subsets 
$$
(m,k,X) \mapsto  \D_{m,k,X} \;\subset\;\D
$$ 
for triples $(m,k,X)$ (for integers $m, k \ge 0$ and positive real values $X$) called a 
{\em fan structure} on $\D$. We consider subsets, $\B_{m,k,X}$,  of the group 
of quadratic characters over $K$ related---according to a certain cuisine---to 
the $\D_{m,k,X}$. We study average $2$-Selmer ranks of twists of  $E$, where 
we twist by collections of quadratic characters of the form $\B_m(X) = \cup_k\B_{m,k,X}$.  
See \S\ref{rankdensities}, especially Definition \ref{11.7} and Corollary \ref{11.11}, below.  
The reason for the adjective `fan' is that the subscript 
$m$ refers to the number of ramified prime divisors in the twisting characters and 
as $m$ increases, our method requires us to average over characters divisible by 
primes of larger and larger norms. The successive primes are allowed to 
`fan out'---so to speak---being subject to increasing upper bounds for the 
absolute value of their norms, this increase  being dictated inductively by 
effective Cebotarev estimates. 

\subsection*{On the ordering of twists}

Perhaps the most natural order of {\it all} elliptic curves over 
a given number field is via the size of the absolute value of the 
conductor of the elliptic curve. In the special context of 
Swinnerton-Dyer's theorem \cite{sw-d} it is a result of Kane \cite{kane}  
(see also \cite{heath-brown}) that one obtains the same arithmetic 
statistics if one orders twists in this manner, rather than ordering 
them the way Swinnerton-Dyer does. Specifically the disparity 
(which remains $0$ in this context) and the $c_r$'s are the 
same as in Swinnerton-Dyer's original theorem.

Something different happens in our more general context. If 
one orders quadratic twists by the 
norm of their conductor, rather than by the largest norm of  
any prime dividing the conductor, 
the disparity may very well change (see \cite[Example 7.13]{kmr}). 
It is conceivable, however, that the relative $2$-Selmer rank 
densities  still exist and are  as dictated by the (appropriately changed) 
disparity and the same numbers $c_r$ as above.

\subsection*{Average Mordell-Weil rank} 
Since the $2$-Selmer rank is an upper bound for the Mordell-Weil rank, 
Theorem \ref{thmb} has the following immediate corollary.

\begin{introcor}
Suppose that $E$ is an elliptic curve over a number field $K$, and that 
$\Gal(K(E[2])/K) \cong S_3$.  With notation as in Theorem \ref{thmb}, 
the average rank of the twists of $E$ satisfies
$$
\lim_{m \to \infty}\lim_{X \to \infty} 
   \frac{\sum_{\chi\in\B_m(X)} \rk(E^\chi(K))}{|\B_m(X)|}
   \;<\; 1.2646 + 0.1211 \cdot \disparity(E/K) \;<\; 1.3252. \\
$$
\end{introcor}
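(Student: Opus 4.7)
The plan is to combine the elementary inequality $\rk(E^\chi(K)) \le \dim_{\F_2}\Sel_2(E^\chi/K)$, which comes from the descent exact sequence (and uses that $E^\chi[2](K)=0$, a consequence of the hypothesis $\Gal(K(E[2])/K)\cong S_3$), with the density formula of Theorem~\ref{thmb}, and then to perform a numerical evaluation using the explicit form of the constants $c_r$ from Definition~\ref{10.3}.

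Concretely, I would bucket the sum $\sum_{\chi\in\B_m(X)} \rk(E^\chi(K))$ by the value of $\dim_{\F_2}\Sel_2(E^\chi/K)$ and apply Theorem~\ref{thmb} to each bucket. Passing to the double limit this gives
$$
\lim_{m\to\infty}\lim_{X\to\infty}\frac{\sum_{\chi\in\B_m(X)} \rk(E^\chi(K))}{|\B_m(X)|} \;\le\; A + \disparity(E/K)\cdot B,
$$
where
$$
A \;=\; \tfrac{1}{2}\sum_{r\ge 0} r\, c_r, \qquad B \;=\; \sum_{r\text{ odd}} r\, c_r \;-\; \sum_{r\text{ even}} r\, c_r.
$$
The final step is then a bare-hands numerical estimate: the $c_r$ of Definition~\ref{10.3} decay faster than any geometric sequence (essentially like $2^{-r(r-1)/2}$), so computing the first few terms of each series and absorbing the tails into a modest geometric bound is enough to conclude $A < 1.2646$ and $B < 0.1211$. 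Combining this with $|\disparity(E/K)|\le 1/2$ produces both displayed inequalities.

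The real technical point---and the step I would treat most carefully---is the interchange of the double limit $\lim_m\lim_X$ with the infinite sum over $r$. Theorem~\ref{thmb} only guarantees convergence of the individual frequencies $N_r(m,X)/|\B_m(X)|$ to their limits for each fixed $r$, so I would fix a cutoff $R$, apply Theorem~\ref{thmb} termwise to $r\le R$, and control the tail via the a priori bound $\dim_{\F_2}\Sel_2(E^\chi/K) = O(m)$ available for any $\chi\in\B_m(X)$ (coming from a standard bound in terms of the number of ramified primes). Together with the super-exponential decay of the tail of $\sum_r r\,c_r$, this justifies passing the limit inside the sum and finishes the argument.
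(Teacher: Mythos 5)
The conceptual outline is right—use $\rk(E^\chi(K)) \le \dim_{\F_2}\Sel_2(E^\chi/K)$ (valid since $\Gal(K(E[2])/K)\cong S_3$ forces $E^\chi(K)[2]=0$), apply Theorem~\ref{thmb} to convert the average into $A + B\cdot\disparity(E/K)$ with $A=\tfrac12\sum r\,c_r$ and $B=\sum_{r\text{ odd}}r\,c_r-\sum_{r\text{ even}}r\,c_r$, and handle the limit interchange via the uniform bound $\dim\Sel_2\le 2m+O(1)$ on $\B_m(X)$ together with the rapid decay of $c_r$. But your final numerical step contains a genuine error, and the logical reduction it rests on is also unsound.

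\textbf{The numerical claim $B<0.1211$ is false.} Working out $c_r$ with $p=2$ (using $\prod_{j\ge1}(1+2^{-j})^{-1}\approx 0.4194224$), one finds $\sum_{r\text{ odd}}r\,c_r\approx 1.325062$ and $\sum_{r\text{ even}}r\,c_r\approx 1.203938$, so $A\approx 1.264500$ and $B\approx 0.121124>0.1211$. Your bound on $B$ simply does not hold.

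\textbf{Even granting both bounds, the reduction is invalid for negative disparity.} Having $A<1.2646$ and $B<0.1211$ gives $A+B\delta<1.2646+0.1211\delta$ only when $\delta\ge 0$; for $\delta<0$ the inequality $B<0.1211$ pushes in the wrong direction. Since both sides are affine in $\delta$ and $\delta\in[-\tfrac12,\tfrac12]$, the correct move is to verify the two endpoints, which reduce to upper bounds on the separate series:
$$
A+\tfrac{B}{2}=\sum_{r\text{ odd}}r\,c_r<1.32515,\qquad A-\tfrac{B}{2}=\sum_{r\text{ even}}r\,c_r<1.20405.
$$
Both of these do hold (by the numerics above, with a tail estimate on $c_r$), and they immediately give the first displayed inequality; the second, $1.2646+0.1211\delta<1.3252$, then follows from $\delta\le\tfrac12$. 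So the target statement is true, but reached by bounding $\sum_{r\text{ odd}}r\,c_r$ and $\sum_{r\text{ even}}r\,c_r$ separately rather than by decomposing into bounds on $A$ and $B$.
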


\subsection*{How generally are these densities Markovian?}
A future project is to understand the extent to which Markov models  
suffice to explain phenomena in contexts of greater generality than we treat here.
  
For example, considering the four different possible types of images of the 
Galois group in $\Aut(E[2]) \cong S_3$, one expects that each case has its 
interesting story. For the case when the image is of order $2$ see 
forthcoming work of the first author \cite{klagsbrun}.
   
One would also want to see this project extended to deal with abelian varieties 
of general dimension. A few lucky accidents, however, happen in dimension one 
that allow us to prove our theorem. 
To explain these accidents we  briefly sketch our method. 

The $2$-Selmer 
group of an elliptic curve $E$ over a number field $K$ is given 
by imposing {``local conditions''} at every place $v$ of $K$, and 
restricting to the subgroup of $H^1(G_K,E[2])$ consisting of cohomology classes 
that satisfy those local conditions at all places.  Twisting $E$ by a 
quadratic character $\chi$ of $K$ does not change the ${\F}_2[G_K]$-module 
$E[2]$, but can (and usually does) change some of the local conditions. It is 
natural, when studying statistics of the ${\F}_2$-dimensions of the Selmer groups 
of these twisted elliptic curves $E^{\chi}$, to first consider the statistics 
of a larger collection of objects, namely of the subspaces of $H^1(G_K,E[2])$ 
subject to what we call an arbitrary {\it Selmer structure}; namely, where for 
a given finite set of places $S$ containing all places dividing $2\infty$ 
and all places of bad reduction for $E$ we impose what one might call 
{``incoherent''} local conditions on the cohomology groups $H^1(G_{K_v},E[2])$ 
by twisting by local quadratic characters $\chi_v$ for $v \in S$, 
retaining the natural local condition at all other places. 
Such a collection of local quadratic characters $\{\chi_v\}_{v\in S}$ may or 
may not be {``coherent''} in the sense that the package $\{\chi_v\}_{v\in S}$ comes 
(by restriction) from a single global quadratic character unramified outside $S$.  
Our method consists in understanding how ranks of these incoherent 
$2$-Selmer groups change as we twist by 
one local character $\chi_v$ at a time. Our Markov process is 
precisely this successive twisting.
     
The way we pass from statistics regarding this large class of incoherent 
Selmer structures to the ones that have global meaning uses 
what we might call {``free''} places $v$. 
A free place $v$ is one where twisting by $\chi_v$ doesn't change 
the local Selmer condition, and hence doesn't change the $2$-Selmer rank. 
The assumption that $E(K)$ has no points of 
order $2$ guarantees that there are enough free places so that 
every incoherent package of local quadratic characters can be augmented by an 
appropriate assortment of characters at free places to render the augmented collection coherent, 
without changing the $2$-Selmer rank. Roughly speaking, averaging over the free places 
allows us to convert rank statistics for incoherent $2$-Selmer groups to 
rank statistics for $2$-Selmer groups of quadratic twists of elliptic curves.

Suppose now that $A$ is a principally polarized abelian variety of dimension $g$, 
and $v \nmid 2\infty$ is a prime of good reduction.  Then  
the local cohomology group $H^1(G_{K_v},A[2])$ is a quadratic space of dimension $2d$, 
where $0 \le d \le 2g$.  The local Selmer condition for the twist of $A$ by $\chi_v$ 
is a Lagrangian subspace of $H^1(G_{K_v},A[2])$.  
There is a canonical Lagrangian subspace $V_{\ur}$, 
the unramified space, which is the local condition if $\chi_v$ is unramified.  
If $\chi_v$ is ramified, then the local condition is a Lagrangian subspace 
whose intersection with $V_{\ur}$ is zero.  A calculation of Poonen and Rains 
\cite[Proposition 2.6]{poonenrains} shows that there are $2^{d(d-1)/2}$ 
such spaces.

When $d=0$, all the local conditions are necessarily zero, so the $2$-Selmer group
is independent of $\chi_v$; these are exactly the free 
places discussed above.  When $d = 1$, there is only one possibility for the 
local condition when $\chi_v$ is ramified.  When $d = 2$, there are two possibilities, 
and one can show that these correspond to the $2$ ramified characters $\chi_v$.  
We don't know which ramified character corresponds to which Lagrangian, but since  
we are averaging over all the local characters, we don't need to.  
If $A$ is elliptic curve, then $d \le 2$, so this covers all cases.

However, if $g > 1$, then $d$ can be greater than $2$.  
In that case there are more than $2$ possible 
ramified Lagrangians, but only $2$ ramified local characters.  
Thus without additional information in this higher-dimensional case, 
we don't know how to average the Selmer rank over the local characters.

\subsection*{How generally are densities determined by Cebotarev conditions?}
It seems likely that the finer question 
of how the Selmer rank changes under twist by a single ramified character
is not determined by Cebotarev conditions alone!  See \cite[\S10]{spin}.

\subsection*{Is an elliptic curve determined (up to isogeny) by the 
Selmer ranks of its twists?}  Theorem \ref{thmb} shows that 
the distribution of $2$-Selmer ranks 
is independent of the elliptic curve $E$ over $\Q$, and over a general number field 
depends only on a single parameter, the disparity.  This leads one to ask 
how much the actual function $\chi \mapsto \dim_{\F_2}\Sel_2(E^\chi)$ 
determines about $E$.  For example, how often do the rank functions of 
two non-isogenous elliptic curves coincide?  The answer seems to be: sometimes, 
but not often.  For a discussion of this question,  
some sufficient conditions for non-isogenous 
elliptic curves to share the same rank function, and some examples, 
see \cite{companions}.

\subsection*{The layout of the paper}  Although our main interest is 
$2$-Selmer ranks of quadratic twists of elliptic curves, our methods also 
apply to more general Selmer groups attached to $2$-dimensional self-dual 
$\F_p[G_K]$-modules, so we work in this generality.

The first part of the paper is purely combinatorial.
In \S\ref{pd} we introduce some notation and very basic facts about probability distributions 
and Markov processes, and in \S\ref{ourM} we introduce the particular 
Markov process that will govern our Selmer rank statistics.  In \S\ref{avgs} 
we axiomatize the kind of counting structure that will arise for our families of twists, 
and in \S\ref{fanaverage} we prove our basic results (Theorem \ref{ybox} and 
Corollary \ref{4.6}) about averages in 
this general setting.

The second part of the paper contains all the arithmetic.  Section \ref{notation} 
describes the general setup of the Selmer groups we will consider, and \S\ref{eex} 
shows how twists of elliptic curves fit into this setup.  In \S\ref{csr} we 
describe how the Selmer rank changes when we change a single local condition, 
and in \S\ref{lgc} we use class field theory to show that the average over all 
local twists (incoherent Selmer structures, in the description above) 
is the same as the average over twists by global characters.  
Finally in \S\ref{rankdensities} we tie everything 
together to prove Theorem \ref{thmb} and related results.

\part{Markov processes and fan structures}
\label{part1}

\section{Probability distributions}
\label{pd}

\begin{defn}
View $\Z_{\ge 0}  = \{ 0,1,2,\dots\}$  as a $\sigma$-finite measure space, with each 
point $ x\in \Z_{\ge 0}$ having measure $1$.  Form the Banach space over $\R$
$$
\ell^1: = L^1(\Z_{\ge 0} ) = 
\{\text{set maps $f: \Z_{\ge 0} \to \R$ such that $\dm{f}:= \sum_{n\ge 0}|f(n)|$ converges}\}.
$$
Let $\PS \subset \ell^1$ denote the closed convex subspace of {\em densities}, 
or {\em probability distributions},
$$
\PS : = \{ f \in \ell^1 : \text{$f(n) \ge 0$ for all $n\in \Z_{\ge 0}$ and $\|f\|=1$}\}.
$$
A bounded linear operator $M : \ell^1 \to \ell^1$ is called a {\em Markov operator} if 
$M(\PS) \subset \PS$.
We can write $M$ as an infinite matrix $[m_{r,s}]_{r,s\in\Z_{\ge 0}}$ 
where, for $f \in \ell^1$, 
$$
(M(f))(s) = \sum_{r \ge 0} m_{r,s}f(r),
$$  
with $\{m_{r,s}\}$ bounded, and then $M$ is a  Markov operator if and only if 
$m_{r,s} \ge 0$ for all $r,s \ge 0$ and $\sum_{s \ge 0}m_{r,s} = 1$ for every $r$.  
\end{defn}

\begin{defn}
If $f \in \PS$, we define the {\em parity} $\parity(f)$ of $f$ by
$$
\parity(f) := \sum_{\text{$n$ odd}}f(n).
$$
Let $\PS^+, \PS^- \subset \PS$ be the subsets
$$
\arraycolsep=2pt
\renewcommand{\arraystretch}{1.25}
\begin{array}{rcl}
\PS^+ := & \{f \in \PS : \text{$f(n) = 0$ if $n$ is odd}\} &= \{f \in \PS : \parity(f) = 0\}, \\
\PS^- := &\{f \in \PS : \text{$f(n) = 0$ if $n$ is even}\} &= \{f \in \PS : \parity(f) = 1\}. \\
\end{array}
$$
We say that a Markov operator $M$ is {\em parity preserving} if $m_{r,s} = 0$ whenever $r \not\equiv s \pmod{2}$, 
and $M$ is {\em parity reversing} if $m_{r,s} = 0$ whenever $r \equiv s \pmod{2}$.

Define operators $\pi^+$, $\pi^-$ on $\ell^1$, $\pi^+ + \pi^- = 1$, by 
$$
\pi^{+}_{r,s} = 
\begin{cases}
1 & \text{if $i=j$ and $i$ is even,} \\
0 & \text{otherwise},
\end{cases}
\quad
\pi^{-}_{r,s} = 
\begin{cases}
1 & \text{if $i=j$ and $i$ is odd,} \\
0 & \text{otherwise}.
\end{cases}
$$
\end{defn}

\begin{lem}
\label{wehave}
Suppose $M$ is a Markov operator and $f \in \PS$.  
\begin{enumerate}
\item
If $M$ is parity preserving, then $M(\PS^\pm) \subset \PS^\pm$, 
$\parity(M(f)) = \parity(f)$, and $M \circ\pi^\pm = \pi^\pm \circ M$, 
\item
if $M$ is parity reversing, then $M(\PS^\pm) \subset \PS^\mp$, 
$\parity(M(f)) = 1-\parity(f)$, and $M \circ\pi^\pm = \pi^\mp \circ M$, 
\item
$\pi^+(f) \in (1-\parity(f))\PS^+$ and
$\pi^-(f) \in \parity(f)\PS^-$.
\end{enumerate}
\end{lem}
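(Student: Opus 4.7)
The plan is to prove (iii) first, since it does not involve $M$, and then handle (i) and (ii) in parallel: they are formally identical once one interchanges ``odd'' and ``even'' at the appropriate places. Throughout I would work at the level of matrix entries $m_{r,s}$, invoking only the defining properties of a Markov operator, namely nonnegativity of each entry and the row-sum condition $\sum_s m_{r,s} = 1$.

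For (iii), unwinding the definition gives $(\pi^+ f)(n) = f(n)$ for $n$ even and $0$ for $n$ odd, so $\pi^+ f$ is a nonnegative function supported on the even integers with $\dm{\pi^+ f} = \sum_{n \text{ even}} f(n) = 1 - \parity(f)$. If this quantity is positive, dividing by it produces an element of $\PS^+$; if it is zero then $\pi^+ f = 0$ and the inclusion holds under the convention $0 \cdot \PS^+ = \{0\}$. The $\pi^-$ statement is symmetric.

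For (i), suppose $M$ is parity preserving. If $f \in \PS^+$ and $s$ is odd, then $(Mf)(s) = \sum_r m_{r,s} f(r)$ vanishes term by term: even-$r$ terms are killed by the hypothesis $m_{r,s} = 0$, odd-$r$ terms by $f(r) = 0$. Hence $M(\PS^+) \subset \PS^+$, and the $\PS^-$ statement is analogous. For $\parity(Mf) = \parity(f)$, swap the order of summation to obtain $\parity(Mf) = \sum_r f(r) \sum_{s \text{ odd}} m_{r,s}$; the inner sum vanishes when $r$ is even (all odd-$s$ entries are zero by hypothesis) and equals the full row sum $1$ when $r$ is odd (all even-$s$ entries are zero), giving $\sum_{r \text{ odd}} f(r) = \parity(f)$. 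Finally, a direct matrix computation yields $(M\pi^+)_{r,s} = m_{r,s} \mathbf{1}_{r \text{ even}}$ and $(\pi^+ M)_{r,s} = m_{r,s} \mathbf{1}_{s \text{ even}}$, which agree exactly because parity preservation forces $m_{r,s} = 0$ whenever $r \not\equiv s \pmod 2$; the $\pi^-$ case is identical.

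For (ii), essentially the same arguments apply with ``parity reversing'' substituted for ``parity preserving''. The computational changes are that $\sum_{s \text{ odd}} m_{r,s}$ is now $0$ for $r$ odd and $1$ for $r$ even, yielding $\parity(Mf) = 1 - \parity(f)$ and $M(\PS^\pm) \subset \PS^\mp$; and the matrix entry comparison becomes $(M\pi^+)_{r,s} = m_{r,s} \mathbf{1}_{r \text{ even}}$ versus $(\pi^- M)_{r,s} = m_{r,s} \mathbf{1}_{s \text{ odd}}$, which agree because parity reversal forces $m_{r,s} = 0$ whenever $r \equiv s \pmod 2$. I do not foresee any substantive obstacle here; the entire proof is bookkeeping, and the one thing to watch is to stay consistent about which of the two indices $r, s$ is being constrained to be even or odd at each step.
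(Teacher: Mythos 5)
Your proof is correct, and since the paper's own ``proof'' is just the single word ``Exercise,'' there is nothing to compare against beyond observing that your argument is precisely the routine bookkeeping the authors had in mind: verify everything at the level of the matrix entries $m_{r,s}$, using only nonnegativity, the row-sum condition $\sum_s m_{r,s}=1$, and the parity hypothesis. You also handled the edge case $\parity(f)\in\{0,1\}$ in part (iii) cleanly via the convention $0\cdot\PS^\pm=\{0\}$, which is the reading the paper intends.
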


\begin{proof}
Exercise.
\end{proof}

\section{Example: the mod $p$ Lagrangian operator $M_L$}
\label{ourM}

Fix a prime $p$.

\begin{defn}
\label{MLdef}
Define a bounded operator $\ML = [m_{r,s}]$ on $\ell^1$ by 
$$
m_{r,s} = 
\begin{cases}
1-p^{-r} & \text{if $s = r-1 \ge 0$}, \\
p^{-r} & \text{if $s = r+1 \ge 1$}, \\
0 & \text{otherwise}.
\end{cases}
$$
\end{defn}

Then $\ML$ is a parity reversing Markov operator, and 
$\ML^2$ is a parity preserving Markov operator.  
We call $\ML$ the mod $p$ Lagrangian operator.

\begin{defn}
\label{10.3}
For $n \ge 0$ define
$$
c_n := \prod_{j=1}^\infty (1+p^{-j})^{-1} \prod_{j=1}^n \frac{p}{p^j-1}.
$$
Define $\Peven, \Podd \in \ell^1$ by
$$
\Peven(n) := \begin{cases} c_n & \text{if $n$ is even} \\ 0 & \text{if $n$ is odd},\end{cases} \quad
\Podd(n) := \begin{cases} 0 & \text{if $n$ is even} \\ c_n & \text{if $n$ is odd}.\end{cases}
$$
\end{defn}

\begin{lem}
\label{oddeven}
\begin{enumerate}
\item
$\Peven \in \PS^+$ and $\Podd \in \PS^-$.
\item
$\ML(\Peven) = \Podd$ and $\ML(\Podd) = \Peven$.
\item
$\ML^2(W^+) \subset W^+$ and $\ML^2(W^-) \subset W^-$.
\end{enumerate}
\end{lem}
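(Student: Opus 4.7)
The plan is to handle the three parts in the order (ii), (iii), (i), since the identity in (ii) is the real content and will feed the normalization in (i). For (ii) I would compute $(\ML\Peven)(s)$ directly from Definition \ref{MLdef}. Since $\ML$ has nonzero entries only on the sub- and super-diagonal, for odd $s \ge 1$ only $r = s\pm 1$ contribute, yielding
\[
(\ML\Peven)(s) \;=\; (1-p^{-(s+1)})\,c_{s+1} \;+\; p^{-(s-1)}\,c_{s-1}.
\]
The ratio $c_{n+1}/c_n = p/(p^{n+1}-1)$ read off from Definition \ref{10.3} collapses the first term to $p^{-s}c_s$ and the second to $(1-p^{-s})c_s$, whose sum is $c_s = \Podd(s)$. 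The boundary $s=0$ of the companion identity reads $(\ML\Podd)(0) = (1-p^{-1})c_1 = A = c_0$, and the rest of $\ML(\Podd)=\Peven$ on positive even $s$ is the same computation with parities swapped.

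Part (iii) follows immediately from Lemma \ref{wehave}: the matrix entries $m_{r,s}$ vanish unless $r$ and $s$ have opposite parity, so $\ML$ is parity reversing; therefore $\ML^2$ is parity preserving, and stabilizes both $\PS^+$ and $\PS^-$ by Lemma \ref{wehave}(i).

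For (i), positivity $c_n > 0$ and the parity support of $\Peven, \Podd$ are immediate from Definition \ref{10.3}, so the only substantive point is $\|\Peven\|_1 = \|\Podd\|_1 = 1$. I would set $A := \prod_{j \ge 1}(1+p^{-j})^{-1}$ and $b_n := \prod_{j=1}^n p/(p^j-1)$, so $c_n = A\, b_n$. The calculation in (ii) is scale-invariant, so $\ML$ still sends the sequence $n \mapsto b_n \cdot \one_{n \text{ even}}$ to $n \mapsto b_n \cdot \one_{n \text{ odd}}$. Because a Markov operator preserves the $\ell^1$-norm of any nonnegative sequence (from $\sum_s m_{r,s}=1$), the two parity partial sums of $\{b_n\}$ agree; call the common value $T$. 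To evaluate $T$, I would invoke Euler's classical identity $\prod_{j=0}^\infty (1+q^j x) = \sum_{n \ge 0} q^{n(n-1)/2} x^n / \prod_{k=1}^n (1-q^k)$ at $q = 1/p$, $x = 1$: the right-hand side is $\sum_n b_n = 2T$ (since $b_n = q^{n(n-1)/2}/\prod_{k=1}^n (1-q^k)$), while the left-hand side factors as $2\prod_{j \ge 1}(1+p^{-j}) = 2/A$. Hence $T = 1/A$ and $\|\Peven\|_1 = A \cdot T = 1$.

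The only genuine obstacle is this last normalization: all the arithmetic manipulations involving $\ML$ are mechanical once the recursion for $c_n$ is written down, but evaluating $\sum b_n$ honestly requires a classical $q$-series identity (equivalently, decomposing $\prod_{j \ge 0}(1+q^j)$ into its even and odd coefficient parts and using the vanishing of $\prod_{j \ge 0}(1-q^j)$ at $q^0 = 1$ to equate them).
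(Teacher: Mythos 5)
Your computation for (ii) is identical to the paper's (the recursion $c_{n+1}/c_n = p/(p^{n+1}-1)$ collapsing the two tridiagonal contributions to $c_s$), and your treatment of (iii) via the parity-preservation lemma is exactly what the paper dismisses as ``clear.'' The genuine divergence is in (i): the paper reduces $\|\Peven\|_1 = \|\Podd\|_1 = 1$ to a citation of Poonen--Rains Proposition 2.6 (or Heath-Brown for $p=2$), whereas you supply a self-contained proof. Your argument has two steps that each buy something: first, you observe that a Markov operator preserves the $\ell^1$-mass of any nonnegative sequence (by Tonelli, with values in $[0,\infty]$, so no prior convergence is needed), and since the unnormalized identity $\ML(b_\bullet\cdot\one_\even) = b_\bullet\cdot\one_\odd$ has already been checked, the even and odd partial sums of $\{b_n\}$ must coincide --- call it $T$. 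Second, you evaluate $2T = \sum_n b_n$ via Euler's $q$-binomial theorem at $x=1$, $q = 1/p$, using $b_n = q^{n(n-1)/2}/\prod_{k\le n}(1-q^k)$ and $\prod_{j\ge 0}(1+q^j) = 2\prod_{j\ge 1}(1+q^j) = 2/A$. This gives $T = 1/A$ and hence the normalization. The net effect is that you trade a black-box citation for a classical $q$-series identity plus the already-established Markov structure; the paper's route is shorter, but yours makes visible why the even and odd totals agree (it is forced by (ii) alone) and isolates the one genuinely analytic input. Both are correct; your reordering (ii) $\to$ (iii) $\to$ (i) is also sound, since the scale-invariance of the (ii) computation means it does not secretly presuppose membership in $\PS$.
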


\begin{proof}
For (i), we only need to show that $\sum_n \Peven(n) = \sum_n \Podd(n) = 1$.  
See \cite[Proposition 2.6]{poonenrains}, or \cite{heath-brown} for the case $p=2$.

It follows directly from the definitions that $\ML(\Peven)(n) = 0$ 
if $n$ is even.  If $n$ is odd, then using that $c_{n+1}/c_n = p/(p^{n+1}-1)$ we have
\begin{multline*}
\ML(\Peven)(n) = c_n \biggl((1-p^{-1-n})\frac{p}{p^{n+1}-1} + p^{1-n}\frac{p^n-1}{p}\biggr) \\
   = c_n(p^{-n} + (1-p^{-n})) = c_n.
\end{multline*}
Thus $\ML(\Peven) = \Podd$, and in exactly the same way $\ML(\Podd) = \Peven$.

The third assertion is clear.
\end{proof}

\begin{prop}
\label{10.5}
For every $f \in \PS$, 
\begin{align*}
\lim_{k \to \infty} \ML^{2k}(f) &= (1-\parity(f)) \Peven + \parity(f) \Podd, \\
\lim_{k \to \infty} \ML^{2k+1}(f) &= \parity(f) \Peven + (1-\parity(f)) \Podd.
\end{align*}
In particular if $\parity(f) = \frac{1}{2}$, then 
$\lim_{k \to \infty} \ML^k(f) = \frac{1}{2}\Podd + \frac{1}{2}\Peven$.
\end{prop}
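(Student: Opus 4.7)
The plan is to decompose $f$ by parity, reduce to convergence of the parity-preserving operator $\ML^2$ on each of $\PS^+$ and $\PS^-$ separately, and then invoke the classical convergence-to-stationarity theorem for Markov chains on a countable state space. Concretely, write $f = \pi^+(f) + \pi^-(f)$ and apply Lemma \ref{wehave}(iii) to obtain $\pi^+(f) = (1 - \parity(f))\,g^+$ and $\pi^-(f) = \parity(f)\,g^-$ for some $g^+ \in \PS^+$ and $g^- \in \PS^-$ (with the convention that $g^\pm$ is arbitrary when the coefficient vanishes). By linearity of $\ML^{2k}$ it suffices to prove
\[
\ML^{2k}(g^+) \too \Peven \quad\text{for every } g^+ \in \PS^+,
\]
and the analogous statement in $\PS^-$; these two cases are completely symmetric by Lemma \ref{oddeven}(ii).

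With that reduction in hand, I would view $\ML^2$ restricted to $\PS^+$ (which is legitimate by Lemma \ref{oddeven}(iii)) as the transition operator of a discrete-time Markov chain on the state space $2\Z_{\ge 0}$. Lemma \ref{oddeven}(ii) gives $\ML^2(\Peven) = \Peven$, so $\Peven$ is a stationary probability measure, and since its coefficients $c_n$ are strictly positive it puts positive mass on every even state. Direct inspection of the two-step transition probabilities shows the chain is irreducible and aperiodic: from $0$, one computes $(\ML^2)_{0,0} = 1 - p^{-1} > 0$ (the self-loop, giving aperiodicity) and $(\ML^2)_{0,2} = p^{-1} > 0$; for $i \ge 1$ one checks similarly that $\ML^2$ sends $2i$ to each of $2i-2$, $2i$, $2i+2$ with positive probability. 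The existence of the summable stationary distribution $\Peven$ is equivalent to positive recurrence for this irreducible chain.

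The classical ergodic theorem for irreducible, aperiodic, positive recurrent Markov chains on a countable state space (see e.g.\ Norris, \emph{Markov Chains}, Thm.~1.8.3) then gives $\|\ML^{2k}(g^+) - \Peven\| \to 0$ for every initial distribution $g^+ \in \PS^+$. Combining with the parity decomposition yields the first displayed limit of the proposition. The second limit follows from the first by continuity of $\ML$ on $\ell^1$ (it is a contraction, having operator norm $\le 1$) together with Lemma \ref{oddeven}(ii):
\[
\ML^{2k+1}(f) = \ML\bigl(\ML^{2k}(f)\bigr) \too \ML\bigl((1-\parity(f))\Peven + \parity(f)\Podd\bigr) = \parity(f)\Peven + (1-\parity(f))\Podd.
\]
The final assertion for $\parity(f) = 1/2$ is then immediate since both limits coincide.

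The only non-formal step is the invocation of the ergodic theorem in the final paragraph; everything else is a direct manipulation via Lemmas \ref{wehave} and \ref{oddeven}. If one wished to avoid citing the general theorem, the strong downward drift of the chain (from state $r$ the probability of descent is $1 - p^{-r}$, exponentially close to $1$) makes a coupling argument between two copies of the chain quite clean, but this adds length without substance. Verifying irreducibility, aperiodicity, and positive recurrence for this particular chain is where a bit of care is required, but each is established by a short direct calculation as indicated above.
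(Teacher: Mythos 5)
Your proposal is correct and follows essentially the same route as the paper: decompose by parity via Lemma \ref{wehave}(iii), treat $\ML^2$ as an irreducible, aperiodic Markov chain on the even (resp.\ odd) nonnegative integers with stationary distribution $\Peven$ (resp.\ $\Podd$) by Lemma \ref{oddeven}, and invoke Norris's convergence theorem. The only cosmetic differences are that you spell out the irreducibility/aperiodicity calculation that the paper merely asserts, and you obtain the odd-power limit by applying $\ML$ and invoking its continuity rather than citing Lemma \ref{wehave}(ii) directly.
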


\begin{proof}
By Lemma \ref{oddeven}(iii), we can view $\ML^2$ as a Markov process on $\Z_{\ge 0}^\even$, 
and by Lemma \ref{oddeven}(i), $\Peven \in \PS^+$ is an equilibrium state for this 
Markov process (i.e., $\ML^2(\Peven) = \Peven$).  
This Markov process is irreducible and aperiodic on $\Z_{\ge 0}^\even$ in the sense of 
\cite[Chapter 1]{norris}.  By \cite[Theorem 1.8.3]{norris}, it follows that the 
equilibrium distribution is unique, and that for every $f \in \PS^+$ we have
$$
\lim_{k \to \infty} \ML^{2k}(f) = \Peven.
$$ 
In exactly the same way, $\Podd \in \PS^-$ is the unique equilibrium state for $\ML^2$ in $\PS^-$ 
and for every $f \in \PS^-$ we have
$
\lim_{k \to \infty} \ML^{2k}(f) = \Podd.
$
Now the proposition follows from Lemma \ref{wehave}(ii,iii).
\end{proof}

\begin{rem}
Our description of Markov processes is limited to Markov 
operators that act on the set of probability distributions. 
One can more generally define Markov operators as infinite 
matrices satisfying the conditions appearing immediately 
prior to Definition 1.2, that act on arbitrary sequences of 
non-negative real numbers.

Some of the techniques we develop here can also be applied 
to such Markov operators, assuming that the operator under 
consideration has a unique (up to scalar multiple) 
equilibrium state. See the 
forthcoming work of the first author and Valko \cite{KlagsbrunValko} 
for an arithmetic application of such a case.
\end{rem}

\section{Axiomatizing the Markovian counting setup}
\label{avgs}

In this section we axiomatize the kind of general argument that we will use to 
find the distribution of Selmer ranks corresponding to (``incoherent'', as discussed in 
the Introduction) twists of an elliptic curve.  

Fix an elliptic curve $A$ defined over a number field $K$, and a rational prime $p$. 
To motivate the definitions below, we illustrate each one by 
giving its interpretation in the elliptic curve case, i.e, 
the case of Selmer ranks attached to twists of $A[p]$.

\subsection{Normed set with linear growth}

\begin{defn}
\label{nsg}
A {\em normed set} is a set $S$ together with a real-valued norm function $\N : S \to \R_{> 0}$.  
If $S$ is a normed set, we define $S(X) := \{s \in S : \N(s) < X\}$, and we say that $S$ has 
{\em linear growth} if for every $\epsilon > 0$, 
\begin{equation}
\label{growth}
X^{1-\epsilon} < |S(X)| < X^{1+\epsilon} \qquad \text{for $X \gg_\epsilon 1$}.
\end{equation}
\end{defn}

The norm provides the fundamental ordering that will allow us to take averages. 

Fix a normed set $\cP$ with linear growth.

\begin{rem}
In the elliptic curve case, let $\Sigma$ be a finite set of places of $K$ including all 
nonarchimedean places, all primes where $A$ has bad reduction, and all primes above $p$. 
Then $\cP$ will be the set of all primes of $K$ not in $\Sigma$, with the usual 
(absolute) norm function.  These primes correspond to ``minimal'' twists.
\end{rem}

\subsection{Width}

\begin{defn}
By a {\em width function} $w : \cP \to \Z_{\ge 0}$ we mean a function with finite image $I$, 
and such that for each $i \in I$, the inverse image $\cP_i := w^{-1}(i)$ with the induced 
norm function $\N$ is a normed set with linear growth.  
\end{defn}

Fix a width function $w$ on $\cP$.

\begin{rem}
In the elliptic curve case, if $\l$ is a prime in $\cP$ we define
$$
w(\l) := 
\begin{cases}
0 & \text{if $\bmu_p \notin K_\l^\times$}, \\
\dim_{\Fp}A(K_\l)[p] & \text{if $\bmu_p \in K_\l^\times$}.  
\end{cases}
$$
Then $\{2\} \subset I \subset \{0,1,2\}$, and if $i \in I$ 
then $\cP_i$ has linear growth by the Cebotarev theorem.
The width $w(\l)$ is the largest possible change in Selmer rank when 
we twist by a local character at $\l$.
\end{rem}

\subsection{Levels}
\label{1.C}

\begin{defn}
A finite subset of $\cup_{i > 0}\cP_i =
 \{q\in\cP : w(q) > 0\}$ will be called a {\em level}.  Denote by $\D$ the 
{\em set of levels}, i.e., the set of all finite subsets of $\cup_{i > 0}\cP_i$.
We extend $w$ and $\N$ from $\cP$ to $\D$ by 
$w(\delta) = \sum_{q\in\delta}w(q)$ and $\N(\delta) = \prod_{q\in\delta}\N(q)$.
\end{defn} 

\begin{rem}
In the elliptic curve case, the levels correspond to square-free ideals supported on 
$\cP_1 \cup \cP_2$.  
If $\chi$ is a quadratic character of $K$, then the level of $\chi$ is the part of the 
conductor of $\chi$ supported on $\cP_1 \cup \cP_2$.  

We exclude primes of width zero from the level because twisting by a 
prime of width zero has no effect on the Selmer group, either because 
all such characters are unramified (if $\bmu_p \notin K_\l^\times$) 
or because $H^1(K_\l,A[p]) = 0$ (if $A(K_v)[p] = 0$).
\end{rem}

\subsection{Rank data}

\begin{defn}
\label{rsdef}
By {\em rank data} on $\D$ we mean a rule that assigns to every level $\delta\in\D$ a finite 
set $\Omega_\delta$, together with the following extra structure:
\begin{itemize}
\item
a map (called the {\em rank map}) $\rk : \Omega_\delta \to \Z_{\ge 0}$ for every $\delta$,
\item
a map $\eta_{\delta,q} : \Omega_{\delta \cup \{q\}} \to \Omega_\delta$ for every $\delta\in\D$ 
and $q\in \cP-\delta$, such that all fibers $\eta_{\delta,q}^{-1}(\omega)$ 
have cardinality independent of $\delta$, $q$ and $\omega$.
\end{itemize}
\end{defn}

Note that it follows from the second property of Definition \ref{rsdef} that 
if $|\delta| = |\delta'|$ then $|\Omega_\delta| = |\Omega_{\delta'}|$.

Fix rank data on $\D$.

\begin{rem}
In the elliptic curve case, for $\delta \in \D$ we set
$$
\Omega_\delta = \{\omega = (\omega_v) \in \prod_{v \in \Sigma \cup \delta} \Hom(K_v^\times,\bmu_p) 
   : \text{$\omega_\l$ is ramified if $\l \in \delta$}\}
$$
(we say that $\omega_\l$ is ramified if it is nontrivial on $\O_\l^\times$, 
the local units in $K_\l^\times$).
The rank map is given by $\rk(\omega) := \dim_{\Fp}\Sel(A[p],\omega)$, 
where $\Sel(A[p],\omega)$ is the twisted Selmer group given by Definition \ref{sstwist} below,  
and the 
map $\eta_{\delta,\l} : \Omega_{\delta \cup \{\l\}} \to \Omega_\delta$ is the forgetful map
that simply drops $\omega_\l$.  Since $w(\l) > 0$, there are exactly $p^2-p$ ramified
characters of $K_\l^\times$, so all fibers $\eta_{\delta,\l}^{-1}(\omega)$ have size $p^2-p$.
\end{rem}

\subsection{Rank distribution function}

\begin{defn}
Given rank data on $\D$, the corresponding 
{\em rank distribution function} is the function $E : \D \to \PS$ defined by
$$
E_\delta(r) = \frac{|\{\omega\in\Omega_\delta : \rk(\omega) = r\}|}{|\Omega_\delta|}
$$
for every $r \ge 0$.
If $B$ is a nonempty finite subset of $\D$, the 
{\em rank distribution over $B$} is the average of the $E_\delta$ over $\delta \in B$, 
weighted according to the size of $\Omega_\delta$:
$$
E_B := \frac{\sum_{\delta\in B}|\Omega_\delta|E_\delta}{\sum_{\delta\in B}|\Omega_\delta|} \in \PS.
$$
\end{defn}

Thus $E_B(r)$ is the probability, as $\delta$ ranges through $B$, that $\rk(\delta) = r$.  
If all $\delta \in B$ have the same cardinality, then all $\Omega_\delta$ have the 
same cardinality, so 
$
E_B = \frac{\sum_{\delta\in B}E_\delta}{|B|}.
$

\subsection{Governing Markov operators}

\begin{defn}
\label{govdef}
Suppose $M$ is a Markov operator. 
We say that $M$ {\em governs} the rank data $\Omega$ if for every $\delta \in \D$, every 
$\omega\in\Omega_\delta$, every $i \in I$, and every $s \in \Z_{\ge 0}$,
\begin{equation}
\label{conv}
\lim_{X \to \infty} 
   \frac{\sum_{q\in\cP_i(X)-\delta}|\{\chi\in\eta_{\delta,q}^{-1}(\omega) : \rk(\chi) = s\}|}
      {\sum_{q\in\cP_i(X)-\delta}|\eta_{\delta,q}^{-1}(\omega)|}
   = m_{\rk(\omega),s}^{(i)}
\end{equation} 
where $M^i = [m^{(i)}_{r,s}]$.
\end{defn}

To say that $M$ governs the rank data means essentially that adding a random 
$q$ affects the rank statistics in the same way as applying the operator $M^{w(q)}$. 
 
Fix a Markov operator $M$ that governs the rank data $\Omega$.  

\begin{rem}
In the elliptic curve case, under suitable hypotheses (see \eqref{bighyp0}, 
\eqref{bighyp1}, and \eqref{bighyp2} below) we will show that 
the rank data described above are governed by the mod $p$ Lagrangian Markov operator 
of Definition \ref{MLdef}.
\end{rem}

\subsection{Convergence rates}

\begin{defn}
\label{cebdef}
A {\em convergence rate} for $(\Omega,M)$ is a nondecreasing 
function $\Ceb$ from the infinite real interval $[1,\infty)$ to itself such that
for every real number $Y \ge 1$, every $\delta\in\D$ with $\N(\delta) < Y$, 
every $\omega\in\Omega_\delta$, every $i \in I$, every $s \in \Z_{\ge 0}$, 
and every $X \ge \Ceb(Y)$,
\begin{equation}
\label{cdefi}
\left|\frac{\sum_{q\in\cP_i(X)-\delta}|\{\chi\in\eta_{\delta,q}^{-1}(\omega) : \rk(\chi) = s\}|}
      {\sum_{q\in\cP_i(X)-\delta}|\eta_{\delta,q}^{-1}(\omega)|}
   - m_{\rk(\omega),s}^{(i)}\right| \le \frac{1}{Y}.
\end{equation}
\end{defn}

In other words, $\Ceb$ makes effective the rate of convergence in \eqref{conv}. 

Fix a convergence rate $\Ceb$ for $(\Omega,\ML)$.

\begin{rem}
In the elliptic curve case, we will show (see Theorem \ref{gov} below) 
that $\ML$ governs the rank data  
with a convergence rate that comes from an effective version of the Cebotarev theorem.
\end{rem}

\subsection{Stratification of levels}

\begin{defn}
\label{boxdef}
Define a sequence of real valued functions $\{L_n(Y)\}_{n \ge 1}$ by
\begin{align*}
L_1(Y) &:= \Ceb(Y), 
   \\
L_{n+1}(Y) &:= \max\{\Ceb(\textstyle\prod_{j \le n}L_j(Y)),Y L_n(Y)\}, 
   \quad n \ge 1. 
\end{align*}
If $m, k \in \Z_{\ge 0}$ and $X \in \R_{>0}$, define the ``fan''
$$
\D_{m,k,X} := \{\delta\in\D : \text{$w(\delta) = k$ and $\delta = \{q_1,\ldots,q_m\}$ 
   with $\N(q_j) < L_j(X)$ for all $j$}\}.
$$
\end{defn}

Although we suppress it from the notation, $\D_{m,k,X}$ depends on the 
(fixed) convergence rate $\Ceb$.

\section{Averages over fan structures}
\label{fanaverage}

Keep the notation of the previous section, along with the fixed prime $p$, 
normed set $\cP$, width function $w$ with image $I$, rank data $\Omega$, Markov operator 
$M$ governing $\Omega$, and convergence rate $\Ceb$ for $(\Omega,M)$.
In this section we will show how to use all of this information to compute the 
rank statistics as we average over our ``fan structures'' $\D_{m,k,X}$.

If $B \subset \D$ and $C \subset \cP$, define 
$$
B*C := \{\delta \cup \{q\} : \delta \in B, q \in C - \delta\}.
$$

\begin{rem}
For our application we would like to compute
$$
\lim_{X \to \infty} E_{\D(X)},
$$
where $\D(X) = \{\delta \in \D : \prod_{q\in\delta}\N(q) < X\}$.
Unfortunately we have not yet been able to do this.  Instead, for every 
level $\delta\in\D$ and $i\in I$ we will show (Proposition \ref{-1}) that
\begin{equation}
\lim_{X \to \infty} E_{\{\delta\} * \cP_i(X)} = M^i(E_{\{\delta\}})
\end{equation}
Using this, we will show (Theorem \ref{ybox}) that for every $m$ and $k$,
$$
\lim_{X \to \infty} E_{\D_{m,k,X}} = M^k(E_{\delta_0})
$$
where $\delta_0 = \emptyset \in \D$.
If $M = \ML$, then taking the limit as $m$ and $k$ go to infinity we can use 
Proposition \ref{10.5} to describe the limiting statistics in terms of the 
equilibrium states of $\ML$ (Corollary \ref{4.6}).
\end{rem}

\begin{prop}
\label{-1}
Suppose that 
$$
b := \sup\{\rk(\omega) : \omega\in\Omega_{\delta\cup\{q\}},q\in\cP_i\} < \infty.
$$
Then for every $Y \ge 1$, every $\delta\in\D$ with $\N(\delta) < Y$, every $i \in I$, 
and every $X \ge \Ceb(Y)$, we have the following upper bound on the $\ell^1$ norm
$$
\left\| E_{\{\delta\} * \cP_i(X)} - M^i(E_\delta)\right\| \le \frac{b+1}{Y}.
$$
\end{prop}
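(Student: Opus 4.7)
The plan is to unpack $E_{\{\delta\}*\cP_i(X)}$ into a double average that already looks like the left-hand side of \eqref{cdefi}, apply the convergence rate pointwise in $s$, and then use the hypothesis $b<\infty$ to truncate the resulting $\ell^1$ sum to $b+1$ terms.

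First I would observe that every element of $\{\delta\}*\cP_i(X)$ has cardinality $|\delta|+1$, so all the $|\Omega_{\delta\cup\{q\}}|$ coincide and equal $F\cdot|\Omega_\delta|$, where $F$ is the common fiber size of $\eta_{\delta,q}$ built into Definition \ref{rsdef}. Partitioning each $\Omega_{\delta\cup\{q\}}$ via $\eta_{\delta,q}$ into fibers above $\Omega_\delta$ rewrites
\[
E_{\{\delta\}*\cP_i(X)}(s)=\frac{1}{|\Omega_\delta|}\sum_{\omega\in\Omega_\delta}R_\omega(s),\qquad
R_\omega(s):=\frac{\sum_{q\in\cP_i(X)-\delta}|\{\chi\in\eta_{\delta,q}^{-1}(\omega):\rk(\chi)=s\}|}{\sum_{q\in\cP_i(X)-\delta}|\eta_{\delta,q}^{-1}(\omega)|}.
\]
Because $\N(\delta)<Y$ and $X\ge\Ceb(Y)$, the convergence-rate inequality \eqref{cdefi} gives $|R_\omega(s)-m^{(i)}_{\rk(\omega),s}|\le 1/Y$ for every $\omega$ and every $s$. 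Averaging over $\omega\in\Omega_\delta$ and recognizing
\[
\frac{1}{|\Omega_\delta|}\sum_{\omega\in\Omega_\delta} m^{(i)}_{\rk(\omega),s}=\sum_r E_\delta(r)\,m^{(i)}_{r,s}=M^i(E_\delta)(s)
\]
produces the pointwise estimate
\[
\bigl|E_{\{\delta\}*\cP_i(X)}(s)-M^i(E_\delta)(s)\bigr|\le\tfrac{1}{Y}\qquad\text{for every }s\ge 0.
\]

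To convert this into the claimed $\ell^1$ bound I would show that both distributions are supported in $\{0,1,\ldots,b\}$. For $E_{\{\delta\}*\cP_i(X)}$ this is immediate from the definition of $b$. For $M^i(E_\delta)$ it is a consequence of the governing relation \eqref{conv}: when $s>b$ the numerator of the left-hand side is identically zero (since $\rk(\chi)\le b$ for every $\chi\in\Omega_{\delta\cup\{q\}}$), so the limit $m^{(i)}_{\rk(\omega),s}$ must be zero for every $\omega\in\Omega_\delta$, whence $M^i(E_\delta)(s)=0$ for all $s>b$. Therefore the pointwise error vanishes outside $\{0,\ldots,b\}$, and the $\ell^1$ norm is a sum of at most $b+1$ terms each bounded by $1/Y$, giving $(b+1)/Y$ as required.

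The only step that demands any real care is the vanishing of $M^i(E_\delta)(s)$ for $s>b$; this is not built into the Markov-operator axioms but is forced by the governing hypothesis together with the uniform rank bound on the $\Omega_{\delta\cup\{q\}}$. Once this observation is isolated, the proof is essentially bookkeeping: a fiber-wise rewriting, a pointwise application of \eqref{cdefi}, and a finite truncation.
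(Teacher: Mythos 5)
Your argument is correct and mirrors the paper's own proof step for step: the same fiber-wise rewriting of $E_{\{\delta\}*\cP_i(X)}(s)$ into an average of $R_\omega(s)$ over $\Omega_\delta$, the same pointwise invocation of \eqref{cdefi}, the same identification $\frac{1}{|\Omega_\delta|}\sum_\omega m^{(i)}_{\rk(\omega),s} = M^i(E_\delta)(s)$, and the same use of \eqref{conv} to kill $m^{(i)}_{\rk(\omega),s}$ for $s>b$ before truncating the $\ell^1$ sum. Your closing remark correctly isolates the one non-obvious point --- that vanishing of $M^i(E_\delta)$ above $b$ is not an axiom of Markov operators but is forced by the governing relation --- which is exactly the subtlety the paper's proof also handles.
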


\begin{proof}
Fix $s \ge 0$, and let $d$ be the common value $|\eta_{\delta,q}^{-1}(\omega)|$ (independent of 
$\omega \in \Omega_\delta$ and $q \in \cP_i$).  Then
\begin{align*}
E_{\{\delta\} * \cP_i(X)}(s) &= \frac{1}{|\cP_i(X)-\delta|}\sum_{q\in\cP_i(X)-\delta} E_{\delta\cup\{q\}}(s) \\
   &= \frac{1}{|\cP_i(X)-\delta|}\sum_{q\in\cP_i(X)-\delta} \frac{|\{\omega\in\Omega_{\delta\cup\{q\}} : \rk(\omega) = s\}|}
      {|\Omega_{\delta\cup\{q\}}|} \\
   &= \frac{1}{|\cP_i(X)-\delta|}\sum_{q\in\cP_i(X)-\delta}
      \frac{\sum_{\omega\in\Omega_\delta}|\{\chi\in\eta_{\delta,q}^{-1}(\omega) : \rk(\chi) = s\}|}
         {d\;|\Omega_\delta|} \\
   &= \frac{1}{|\Omega_\delta|}\sum_{\omega\in\Omega_\delta}
      \frac{\sum_{q\in\cP_i(X)-\delta}|\{\chi\in\eta_{\delta,q}^{-1}(\omega) : \rk(\chi) = s\}|}
      {d\;|\cP_i(X)-\delta|}.
\end{align*}
On the other hand, 
\begin{equation}
\label{excdefi}
M^i(E_\delta)(s) 
   = \sum_{r \ge 0}m_{r,s}^{(i)}\frac{|\{\omega\in\Omega_\delta : \rk(\omega) = r\}|}{|\Omega_\delta|}
   = \frac{1}{|\Omega_\delta|}\sum_{\omega\in\Omega_\delta} m_{\rk(\omega),s}^{(i)}.
\end{equation}
Using the inequality \eqref{cdefi} we conclude that
$$
\left|E_{\{\delta\} * \cP_i(X)}(s) - M^i(E_\delta)(s)\right| \le 1/Y.
$$
If $s > b$, then $E_{\{\delta\} * \cP_i(X)}(s) = 0$, and by \eqref{conv} we have 
$m_{\rk(\omega),s}^{(i)} = 0$ for every $\omega\in\Omega_\delta$.  Therefore by \eqref{excdefi} 
$M^i(E_\delta)(s) = 0$ as well.  The proposition follows.
\end{proof}

\begin{thm}
\label{ybox}
Suppose that there are constants $b_0$, $b_1$ such that for every $\delta\in\D$ 
and every $\omega\in\Omega_\delta$,
$$
\rk(\omega) \le b_1 w(\delta) + b_0.
$$
Let $\delta_0 = \emptyset \in \D$.  
Then for every $m, k \ge 0$ such that $\cup_X \D_{m,k,X}$ is nonempty, 
$$
\lim_{X \to \infty} E_{\D_{m,k,X}} = M^k(E_{\delta_0}).
$$
\end{thm}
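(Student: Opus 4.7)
The plan is to induct on $m$. The base case $m=0$ forces $k=0$, since $\D_{0,k,X}$ is nonempty only for $k=0$; then $\D_{0,0,X} = \{\delta_0\}$ and $E_{\D_{0,0,X}} = E_{\delta_0} = M^0(E_{\delta_0})$.

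For the inductive step, I would first use the uniform-fiber hypothesis in Definition \ref{rsdef} to observe that $|\Omega_\delta|$ depends only on $|\delta|$, so $E_{\D_{m,k,X}}$ is the unweighted average of the $E_\delta$ over $\delta \in \D_{m,k,X}$. Next I would decompose each such $\delta$ as $\delta = \delta' \cup \{q\}$, where $q$ is the prime of largest norm in $\delta$; monotonicity of $L_j$ gives $\delta' \in \D_{m-1, k-w(q), X}$, and $q \in \cP_{w(q)}(L_m(X))$. The only extra condition is that $\N(q)$ exceed every $\N(q')$ for $q' \in \delta'$, but since $\N(q') < L_{m-1}(X)$ while $L_m(X) \ge X \cdot L_{m-1}(X)$ and $|\cP_i|$ has linear growth, the ``forbidden'' primes form an asymptotically negligible fraction of $\cP_i(L_m(X))$. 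Therefore, up to error vanishing in $\ell^1$ as $X \to \infty$,
\[
E_{\D_{m,k,X}} = \sum_{i > 0} \lambda_i(X) \cdot E_{\D_{m-1, k-i, X} * \cP_i(L_m(X))} + o(1),
\]
where $\lambda_i(X)$ is the convex weight proportional to $|\D_{m-1, k-i, X}| \cdot |\cP_i(L_m(X))|$.

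The heart of the argument is to apply Proposition \ref{-1} with $Y := \prod_{j < m} L_j(X)$ and with the $X$ of that proposition replaced by $L_m(X)$. Every $\delta' \in \D_{m-1, k-i, X}$ satisfies $\N(\delta') \le Y$, and the recursion defining $L_m$ forces $L_m(X) \ge \Ceb(Y)$---this is exactly why that recursion was set up. The hypothesis $\rk(\omega) \le b_1 w(\delta) + b_0$ supplies the uniform bound $b := b_1 k + b_0$ required by Proposition \ref{-1}. Averaging the resulting $\ell^1$ estimate $(b+1)/Y$ over $\delta' \in \D_{m-1, k-i, X}$ yields
\[
E_{\D_{m-1, k-i, X} * \cP_i(L_m(X))} = M^i(E_{\D_{m-1, k-i, X}}) + o(1).
\]
By the inductive hypothesis $E_{\D_{m-1, k-i, X}} \to M^{k-i}(E_{\delta_0})$, and since $M^i$ is a bounded operator on $\ell^1$ this gives $M^i(E_{\D_{m-1, k-i, X}}) \to M^k(E_{\delta_0})$. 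All summands in the convex combination have the same limit, so the whole expression converges to $M^k(E_{\delta_0})$.

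The main obstacle I anticipate is the bookkeeping: keeping track of three sources of error simultaneously---the ``forbidden prime'' correction from the ordering condition on $q$, the Proposition \ref{-1} approximation, and the minor correction from $|\cP_i(L_m(X)) - \delta'|$ versus $|\cP_i(L_m(X))|$---and checking that each is uniform in $\delta'$ so that the bounds survive the average over $\D_{m-1, k-i, X}$ and the sum over $i$. A minor additional point is that whenever $\D_{m,k,X}$ is nonempty for some $X$, at least one $\D_{m-1, k-i, X}$ is also nonempty for some $X$, so the inductive hypothesis actually applies wherever it is needed.
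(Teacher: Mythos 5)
Your proof is correct and follows essentially the same inductive strategy as the paper: decompose $\D_{m,k,X}$ by peeling off a top prime, apply Proposition \ref{-1} with $Y = \prod_{j<m}L_j(X)$ (which is exactly what the recursion $L_m(X)\ge\max\{\Ceb(\prod_{j<m}L_j(X)),XL_{m-1}(X)\}$ is designed to allow), average, and invoke the inductive hypothesis. The one small structural difference is cosmetic: where you partition by the maximal-norm prime and then absorb the ordering/multiplicity constraints as negligible corrections, the paper instead splits $\cP_i$ at the threshold $L_{m-1}(X)$, taking $B_{i,X}=\D_{m-1,k-i,X}*\cP_i(L_{m-1}(X),L_m(X))$ plus a negligible remainder $\D_{m,k,X}'$, which makes the decomposition of $\D_{m,k,X}$ exact and so sidesteps the ``forbidden prime'' bookkeeping you correctly flag as the main technical nuisance.
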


Before proving Theorem \ref{ybox}, we have the following elementary lemma.

\begin{lem}
\label{Eavg}
If $B \subset B'$ are nonempty finite subsets of $\D$ and all $\delta \in B'$ 
have the same cardinality, then 
$$
\| E_{B} - E_{B'}\| \le 2\frac{|B'-B|}{|B|}.
$$
\end{lem}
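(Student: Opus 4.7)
The plan is to observe first that the hypothesis ``all $\delta \in B'$ have the same cardinality'' is what makes the statement essentially trivial: by the cardinality invariance noted just after Definition \ref{rsdef}, all the sets $\Omega_\delta$ for $\delta\in B'$ have the same size, so the weighted averages defining $E_B$ and $E_{B'}$ reduce to unweighted averages, namely
$$
E_B = \frac{1}{|B|}\sum_{\delta\in B} E_\delta, \qquad E_{B'} = \frac{1}{|B'|}\sum_{\delta\in B'} E_\delta.
$$
This reduction removes the only nontrivial wrinkle; the rest is linear algebra in $\ell^1$.

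Next I would split $B' = B \sqcup (B'-B)$ and write
$$
E_B - E_{B'} = \left(\frac{1}{|B|}-\frac{1}{|B'|}\right)\sum_{\delta\in B} E_\delta \;-\; \frac{1}{|B'|}\sum_{\delta\in B'-B} E_\delta.
$$
Now apply the triangle inequality in $\ell^1$, using the fact that each $E_\delta$ lies in $\PS$ and hence has norm $1$. The first sum contributes at most $(1/|B|-1/|B'|)\cdot|B| = 1 - |B|/|B'| = |B'-B|/|B'|$, and the second contributes at most $|B'-B|/|B'|$. Adding gives $\|E_B - E_{B'}\| \le 2|B'-B|/|B'|$, which is bounded above by $2|B'-B|/|B|$ since $|B'|\ge |B|$.

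There is no real obstacle here; the only step requiring a moment's thought is verifying that the weighted definition of $E_B$ collapses to the unweighted one under the common-cardinality hypothesis, and then being careful to pair up the $1/|B|$ and $1/|B'|$ contributions so as to apply the triangle inequality cleanly. The factor of $2$ in the bound reflects exactly the two pieces of this split.
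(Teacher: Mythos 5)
Your proof is correct and is essentially the same as the paper's: both split off $B'-B$ from $B'$, pair the $\tfrac{1}{|B|}$ and $\tfrac{1}{|B'|}$ coefficients on the common sum over $B$, and apply the triangle inequality using $\|E_\delta\|=1$. The only cosmetic difference is that you record the slightly sharper intermediate bound $2|B'-B|/|B'|$ before relaxing to $2|B'-B|/|B|$.
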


\begin{proof}
Let $F = \sum_{\delta\in B}E_\delta \in \ell^1$ and $G =\sum_{\delta\in B'-B}E_\delta \in \ell^1$.  Then
$$
E_{B} - E_{B'} = \frac{F}{|B|} - \frac{F+G}{|B'|} = \frac{(|B'|-|B|)F - |B|G}{|B||B'|}
$$
so 
$$
\|E_{B} - E_{B'}\| \le \frac{|B'-B|}{|B|} \frac{\| F \|}{|B|} + \frac{\| G \|}{|B'|} 
   \le \frac{|B'-B|}{|B|} + \frac{|B'-B|}{|B|}.
$$
\end{proof}

\begin{proof}[Proof of Theorem \ref{ybox}]
We will prove this by induction on $m$.  If $m=0$, then $k=0$, $\D_{m,k,X} = \{\delta_0\}$ for every $X$, 
and there is nothing to prove.

Now suppose $m \ge 1$.  Define 
$$
\D_{m,k,X}' := \{\delta\in\D_{m,k,X} : \text{$\N(q) \le L_{m-1}(X)$ for every $q \in \delta$}\}.
$$
and for every $i \in I$, let $\cP_i(X,Y) := \{q \in \cP_i : X \le \N(q) < Y\}$ and 
$$
B_{i,X} := \D_{m-1,k-i,X} * \cP_i(L_{m-1}(X),L_m(X)).
$$  
Then 
\begin{equation}
\label{prelim}
\D_{m,k,X} = \coprod_{i \in I} B_{i,X} \coprod \D_{m,k,X}'
\end{equation}

If $\delta \in \D_{m-1,k-i}$ then Lemma \ref{Eavg} and \eqref{growth} show that for large $X$,
\begin{equation}
\label{2/X}
\| E_{\{\delta\}*\cP_i(L_m(X))} - E_{\{\delta\}*\cP_i(L_{m-1}(X),L_{m}(X))} \| 
   \le \frac{2 \,|\cP_i(L_{m-1}(X))|}{|\cP_i(L_{m-1}(X),L_{m}(X))|}.
\end{equation}
Suppose $\D_{m-1,k-i,X}$ is nonempty, and abbreviate $D_X := \D_{m-1,k-i,X}$.  
We will apply Proposition \ref{-1} with $Y = \prod_{j < m} L_j(X)$.
For every $\delta \in $ we have $\N(\delta) \le Y$, and $L_m(X) \ge \Ceb(Y)$. 
Thus by \eqref{2/X} and Proposition \ref{-1}
\begin{align*}
\| E_{B_{i,X}} - M^i&(E_{D_X}) \|
   = \left\| \frac{\sum_{\delta\in D_X}E_{\{\delta\}*\cP_i(L_{m-1}(X),L_m(X))}}{|D_X|} 
      - \frac{\sum_{\delta\in D_X}M^i(E_\delta)}{|D_X|} \right\| \\ 
   &\le \frac{\sum_{\delta\in D_X}\| E_{\{\delta\}*\cP_i(L_m(X))} - M^i(E_\delta)\|}{|D_X|} 
      + \frac{2 \,|\cP_i(L_{m-1}(X))|}{|\cP_i(L_{m-1}(X),L_{m}(X))|} \\
   &\le \frac{b_1 k + b_0 + 1}{\prod_{j < m} L_j(X)} 
      + \frac{2 \,|\cP_i(L_{m-1}(X))|}{|\cP_i(L_{m-1}(X),L_{m}(X))|}.
\end{align*}
Both terms go to zero as $X$ grows (using \eqref{growth} 
for the second term), and by our induction hypothesis
$\lim_{X\to\infty}E_{D_X} = M^{k-i}(E_{\delta_0})$, so for every $i \in I$
\begin{equation}
\label{lim}
\lim_{X \to \infty} E_{B_{i,X}} = M^k(E_{\delta_0}).
\end{equation}

By \eqref{growth} we see that for every $\epsilon > 0$, as $X$ grows we have
$$
|\D_{m,k,X}'| \ll \bigl(L_{m-1}(X)\prod_{j < m} L_j(X) \bigr)^{1+\epsilon}
$$
and either $B_{i,X}$ is empty or
$$
|B_{i,X}| \gg \bigl(\prod_{j \le m} L_j(X)\bigr)^{1-\epsilon}. 
$$
In particular 
$
\lim_{X\to\infty} |\D_{m,k,X}'|/\sum_i |B_{i,X}| = 0,
$  
so by Lemma \ref{Eavg} and equations \eqref{prelim} and \eqref{lim},
$$
\lim_{X \to \infty} E_{\D_{m,k,X}} 
   = \lim_{X \to \infty} E_{\coprod B_{i,X}}
   = M^k(E_{\delta_0}).
$$
\end{proof}

\begin{defn}
\label{3.10}
Let $\D^{(k)}_X = \cup_m \D_{m,k,X}$.
\end{defn}

Note that $\cup_X\D_{m,k,X}$ is nonempty if and only if $k$ can be written as 
a sum of $m$ (not necessarily distinct) elements of $I$.  In particular, if $\cup_X\D_{m,k,X}$ 
is nonempty then $m \le k$, so $\D^{(k)}_X$ is finite for 
every $k$.

\begin{cor}
\label{4.6}
Suppose that the hypotheses of Theorem \ref{ybox} hold, and $M = \ML$, the mod $p$ 
Lagrangian operator of Definition \ref{MLdef}.  Then 
\begin{align*}
\lim_{k \to \infty}\lim_{X \to \infty} E_{\D^{(2k)}_X} 
   &= (1-\parity(E_{\delta_0}))\Peven + \parity(E_{\delta_0})\Podd, \\
\lim_{k \to \infty}\lim_{X \to \infty} E_{\D^{(2k+1)}_X} 
   &= \parity(E_{\delta_0})\Peven + (1-\parity(E_{\delta_0}))\Podd.
\end{align*}
where $\Peven$ and $\Podd$ are given by Definition \ref{10.3}.
In particular these limits depend only on the parity $\parity(E_{\delta_0})$ of the initial state $E_{\delta_0}$.
If $\parity(E_{\delta_0}) = 1/2$, then 
$$
\textstyle
\lim_{k \to \infty}\lim_{X \to \infty} E_{\D^{(k)}_X} 
   = \frac{1}{2}\Peven + \frac{1}{2}\Podd.
$$
\end{cor}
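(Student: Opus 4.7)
The plan is to reduce Corollary \ref{4.6} to Theorem \ref{ybox} and Proposition \ref{10.5} by a single combinatorial manoeuvre: average over the parameter $m$ while $X \to \infty$, then let $k \to \infty$.

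First I would establish, for every fixed $k \ge 0$, that
$$
\lim_{X \to \infty} E_{\D^{(k)}_X} \;=\; \ML^k(E_{\delta_0}).
$$
By the remark immediately preceding the corollary, $\D^{(k)}_X$ is the disjoint union of the $\D_{m,k,X}$ over the finitely many $m \le k$ for which $\cup_X \D_{m,k,X}$ is nonempty. All levels in a given $\D_{m,k,X}$ have cardinality $m$, so the $|\Omega_\delta|$ are constant within $\D_{m,k,X}$, and $E_{\D^{(k)}_X}$ is therefore a convex combination (with weights $|\D_{m,k,X}|/|\D^{(k)}_X|$ that depend on $X$) of the finitely many $E_{\D_{m,k,X}}$. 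Theorem \ref{ybox} says each $E_{\D_{m,k,X}}$ converges to the same limit $\ML^k(E_{\delta_0})$ as $X \to \infty$, so the $\ell^1$-distance from the convex combination to this common limit is bounded by $\max_m \| E_{\D_{m,k,X}} - \ML^k(E_{\delta_0}) \|$, which goes to zero.

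Second, I would pass $k \to \infty$ by feeding $f = E_{\delta_0}$ into Proposition \ref{10.5}: the subsequences $\ML^{2k}(E_{\delta_0})$ and $\ML^{2k+1}(E_{\delta_0})$ converge respectively to
$(1-\parity(E_{\delta_0}))\Peven + \parity(E_{\delta_0})\Podd$ and $\parity(E_{\delta_0})\Peven + (1-\parity(E_{\delta_0}))\Podd$, which is exactly the two-line conclusion of the corollary. The final clause, for $\parity(E_{\delta_0}) = 1/2$, is the corresponding remark in Proposition \ref{10.5}: both parity subsequences then have the common limit $\tfrac{1}{2}\Peven + \tfrac{1}{2}\Podd$, so the unrestricted sequence in $k$ converges to that value as well.

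There is no real obstacle here; the only subtlety is that the weights in the convex combination over $m$ depend on $X$, which is handled by the uniform triangle-inequality bound above (using that only finitely many $m \le k$ contribute and that all partial limits coincide). All the substantive analysis has already been carried out in Theorem \ref{ybox} and Proposition \ref{10.5}.
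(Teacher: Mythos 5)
Your proof is correct and takes the same route the paper intends: the paper's own proof is the one-line citation of Theorem \ref{ybox} and Proposition \ref{10.5}, and you simply spell out the convex-combination step that assembles $E_{\D^{(k)}_X}$ from the finitely many $E_{\D_{m,k,X}}$ before passing $k\to\infty$. One small inaccuracy that does not affect the argument: the convex weights on the $E_{\D_{m,k,X}}$ are not $|\D_{m,k,X}|/|\D^{(k)}_X|$ but $\bigl(\sum_{\delta\in\D_{m,k,X}}|\Omega_\delta|\bigr)\big/\bigl(\sum_{\delta\in\D^{(k)}_X}|\Omega_\delta|\bigr)$, since $|\Omega_\delta|$ grows with $|\delta|=m$; your triangle-inequality bound only uses that the combination is convex, so the conclusion stands.
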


\begin{proof}
This follows directly from Theorem \ref{ybox} and Proposition \ref{10.5}.
\end{proof}

\part{Application to the distribution of Selmer ranks}

\section{Setup}
\label{notation}

For the rest of this paper we will apply the results of Part \ref{part1} 
to study the distribution of Selmer ranks in families of twists.

Fix a number field $K$ and a rational prime $p$.  Let $\Kb$ denote a fixed algebraic 
closure of $K$, and $G_K := \Gal(\Kb/K)$.  Let $\bmu_p$ denote the group of $p$-th roots 
of unity in $\Kb$.  
We will use $v$ (resp., $\l$) for a place (resp., nonarchimedean place, or prime ideal) of $K$.
If $v$ is a place of $K$, we let $K_v$ denote the completion of $K$ at $v$, and 
$K_v^\ur$ its maximal unramified extension.  

Fix also a two-dimensional 
$\Fp$-vector space $T$ with a continuous action of $G_K$, and with a nondegenerate 
$G_K$-equivariant alternating pairing corresponding to an isomorphism
\begin{equation}
\label{weilpair}
\wedge^2T \isom \bmu_p.
\end{equation}
We say that $T$ is unramified at $v$ if the inertia subgroup of $G_{K_v}$ 
acts trivially on $T$, and in that case we define the unramified subgroup 
$\Hu(K_v,T) \subset H^1(K_v,T)$ by
$$
\Hu(K_v,T) := H^1(K_v^\ur/K_v,T) = \ker [H^1(K_v,T) \to H^1(K_v^\ur,T)].
$$
If $c \in H^1(K,T)$ and $v$ is a place of $K$, we will often abbreviate 
$c_v := \loc_v(c)$ for the localization of $c$ in $H^1(K_v,T)$.

We also fix a finite set $\Sigma$ of places of $K$, containing all places where $T$ is ramified, all 
primes above $p$, and all archimedean places.

\begin{defn}
\label{qfdef}
If $V$ is a vector space over $\Fp$, a {\em quadratic form} on $V$ is a function $q : V \to \Fp$ such that 
\begin{itemize}
\item
$q(av) = a^2 q(v)$ for every $a \in \Fp$ and $v \in V$,
\item
the map $(v,w)_q := q(v+w)-q(v)-q(w)$ is a bilinear form.
\end{itemize}
If $X \subset V$, we denote by $X^\perp$ the orthogonal complement of $X$ in $V$ under 
the pairing $(\;\;,\,\;)_q$.  We say that $(V,q)$ is a {\em \metabolic space} if 
$(\;\;,\,\;)_q$ is nondegenerate and $V$ has a subspace 
$X$ such that $X = X^\perp$ and $q(X) = 0$.  Such a subspace $X$ is called 
a {\em Lagrangian subspace} of $V$.
\end{defn}

For every place $v$ of $K$, the cup product and the pairing \eqref{weilpair} 
induce a pairing
$$
H^1(K_v,T) \times H^1(K_v,T) \map{\;\cup\;} H^2(K_v,T \otimes T) \too H^2(K_v,\bmu_p).
$$
For every $v$ there is a canonical inclusion $H^2(K_v,\bmu_p) \hookto \Fp$
that is an isomorphism if $v$ is nonarchimedean.
The local Tate pairing is the composition
\begin{equation}
\label{tatepair}
\ld \;\;,\;\rd_v : H^1(K_v,T) \times H^1(K_v,T) \too \Fp.
\end{equation}

\begin{defn}
Suppose $v$ is a place of $K$.  
We say that $q$ is a {\em Tate quadratic form} on $H^1(K_v,T)$ if the bilinear form 
induced by $q$ (Definition \ref{qfdef}) is $\ld \;\;,\;\, \rd_v$.  
If $v \notin \Sigma$, then 
we say that $q$ is {\em unramified} if $q(x) = 0$ for all $x \in \Hu(K_v,T)$.
\end{defn}

\begin{defn}
\label{ahsdef}
Suppose $T$ is as above.  A {\em global \metabolic structure} $\bq$ on $T$ consists 
of a Tate quadratic form $q_v$ on $H^1(K_v,T)$ for every place $v$, such that
\begin{enumerate}
\item
$(H^1(K_v,T),q_v)$ is a \metabolic space for every $v$,
\item
if $v \notin \Sigma$ then $q_v$ is unramified,
\item
if $c \in H^1(K,T)$ then $\sum_v q_v(c_v) = 0$.
\end{enumerate} 
\end{defn}

Note that if $c \in H^1(K,T)$ then $c_v \in \Hu(K_v,T)$ for almost all $v$, 
so the sum in Definition \ref{ahsdef}(iii) is finite.

\begin{defn}
Suppose $v$ is a place of $K$ and $q_v$ is a quadratic form on $H^1(K_v,T)$.  
Let 
$$
\H(q_v) := \{\text{Lagrangian subspaces of $(H^1(K_v,T),q_v)$}\},
$$
and if $v \notin \Sigma$
$$
\H_\ram(q_v) := \{X \in \H(q_v) : X \cap \Hu(K_v,T) = 0\}.
$$
\end{defn}

\begin{lem}
\label{countlem}
Suppose $v \notin \Sigma$ and $q_v$ is a Tate quadratic form on $H^1(K_v,T)$.  
Let $d_v := \dim_{\Fp}T^{G_{K_v}}$.  
Then:
\begin{enumerate}
\item
$\dim_{\Fp}H^1(K_v,T) = 2d_v$, 
\item
every $X \in \H(q_v)$ has dimension $d_v$, 
\item
if $d_v > 0$ and $q_v$ is unramified, then 
$|\H_\ram(q_v)| = p^{d_v-1}$.
\end{enumerate}
\end{lem}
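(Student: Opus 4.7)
The plan for (i) is to combine the local Euler characteristic formula with local Tate duality. Since $v \notin \Sigma$, the place $v$ is nonarchimedean and $v \nmid p$, so $\chi(K_v,T) = 1$, giving $\dim H^0(K_v,T) - \dim H^1(K_v,T) + \dim H^2(K_v,T) = 0$. The pairing $\wedge^2 T \isom \bmu_p$ identifies $T$ with $\Hom(T,\bmu_p)$, so local Tate duality yields $H^2(K_v,T) \cong H^0(K_v,T)^\vee$, and thus $\dim H^2(K_v,T) = d_v$. Substituting gives $\dim H^1(K_v,T) = 2d_v$.

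For (ii) I would argue by pure linear algebra. The metabolic assumption on $(H^1(K_v,T),q_v)$ is that its associated bilinear form is nondegenerate, so every subspace $X$ satisfies $\dim X + \dim X^\perp = \dim H^1(K_v,T) = 2d_v$ by (i). The defining condition $X = X^\perp$ of a Lagrangian then forces $\dim X = d_v$.

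For (iii) the strategy is first to identify $\Hu(K_v,T)$ as a distinguished Lagrangian, and then to count Lagrangians transverse to it. Three things need to be checked for $\Hu(K_v,T) \in \H(q_v)$: inflation--restriction for the unramified module $T$ gives $\Hu(K_v,T) \cong T/(\Frob_v - 1)T$, which has $\Fp$-dimension $d_v$; the local Tate pairing annihilates pairs of unramified classes since $H^2(K_v^{\ur}/K_v,\bmu_p) = 0$, so $\Hu(K_v,T) \subseteq \Hu(K_v,T)^\perp$, and by (i) and (ii) this inclusion is forced to be an equality; finally $q_v(\Hu(K_v,T)) = 0$ by the assumed unramifiedness of $q_v$. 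Hence $\Hu(K_v,T)$ is a Lagrangian.

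It then remains to count Lagrangians $X$ with $X \cap \Hu(K_v,T) = 0$. Since $\Hu(K_v,T)$ and such an $X$ both have dimension $d_v$ inside the $2d_v$-dimensional metabolic space $H^1(K_v,T)$, this is the question of counting Lagrangian complements to a fixed Lagrangian in the Lagrangian Grassmannian, whose answer is the standard $p^{d_v(d_v-1)/2}$ (Poonen--Rains \cite[Prop.~2.6]{poonenrains}). Because $T$ is two-dimensional over $\Fp$, one has $d_v \in \{1,2\}$ whenever $d_v > 0$, and in both of these cases $d_v(d_v-1)/2 = d_v-1$, giving the asserted $p^{d_v-1}$. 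I expect the main obstacle to be this Grassmannian count, especially in the $p=2$ case, where the quadratic form and its associated bilinear form are no longer interchangeable, and one must keep careful track of which notion of isotropy is in play when parametrizing Lagrangian complements as graphs.
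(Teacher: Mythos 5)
Your argument is correct and matches the route the paper implicitly takes: (i) is local Euler characteristic plus Tate duality with $T$ self-dual via $\wedge^2 T\cong\bmu_p$, (ii) is immediate linear algebra once one knows the Tate pairing is nondegenerate (which is what makes $q_v$ a Tate quadratic form — a cleaner justification than invoking a ``metabolic assumption,'' which is not among the lemma's hypotheses), and (iii) reduces to the Poonen--Rains count $p^{d_v(d_v-1)/2}$ of Lagrangians transverse to $\Hu(K_v,T)$, which coincides with $p^{d_v-1}$ precisely because $\dim_{\Fp}T=2$ forces $d_v\le 2$. The paper cites \cite[Lemma 3.7]{kmr} and explicitly attributes (iii) to \cite[Proposition 2.6]{poonenrains}, so your proof is essentially the intended one.
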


\begin{proof}
\cite[Lemma 3.7]{kmr} (Assertion (iii) follows from \cite[Proposition 2.6]{poonenrains}.)
\end{proof}

\begin{defn}
\label{ssdef}
Suppose $T$ is as above and $\bq$ is a global \metabolic structure on $T$.
A {\em Selmer structure} $\cS$ for $(T,\bq)$ (or simply for $T$, if $\bq$ is understood) 
consists of 
\begin{itemize}
\item
a finite set $\Sigma_\cS$ of places of $K$, containing $\Sigma$,
\item
for every $v \in \Sigma_\cS$, a Lagrangian subspace $\HF(K_v,T) \subset H^1(K_v,T)$.
\end{itemize}
If $\cS$ is a Selmer structure, we set $\HF(K_v,T) := \Hu(K_v,T)$ if $v \notin \Sigma_\cS$, 
and we define the {\em Selmer group} $\HF(K,T) \subset H^1(K,T)$ 
by 
$$
\HF(K,T) := \ker (H^1(K,T) \too \dirsum{v}H^1(K_v,T)/\HF(K_v,T)),
$$
i.e., the subgroup of $c \in H^1(K,T)$ such that $c_v \in \HF(K_v,T)$ for every $v$.
\end{defn}

\begin{defn}
\label{setdef}
If $L$ is a field, define
$$
\Xset(L) := \Hom(G_L,\bmu_p)
$$
(throughout this paper, ``$\Hom$'' will always mean continuous homomorphisms).
If $L$ is a local field, we let $\Xset_\ram(L) \subset \Xset(L)$ denote the 
subset of ramified characters.  In this case local class field theory identifies 
$\Xset(L)$ with $\Hom(L^\times,\bmu_p)$, and $\Xset_\ram(L)$ is then the subset of 
characters nontrivial on the local units $\O_L^\times$.
Let $\one_L \in \Xset(L)$ denote the trivial character.

There is a natural action of $\Aut(\bmu_p) = \Fp^\times$ on $\Xset(L)$, and we let 
$\Fset(L) := \Xset(L)/\Aut(\bmu_p)$.  Then $\Fset(L)$ is naturally identified with 
the set of cyclic extensions of $L$ of degree dividing $p$, via the correspondence 
that sends $\chi \in \Xset(L)$ to the fixed field $\bar{L}^{\ker(\chi)}$ of $\ker(\chi)$ 
in $\bar{L}$.  If $L$ is a local field, then $\Fset_\ram(L)$ denotes the set of ramified 
extensions in $\Fset(L)$.
\end{defn}
  
\begin{defn}
Define
\begin{align*}
\cP_{i\phantom{0}} &:=\; \{\l : \text{$\l \notin \Sigma$, $\bmu_p \subset K_\l$, 
   and $\dim_{\Fp}T^{G_{K_\l}} = i$}\}\quad \text{if $1 \le i \le 2$}, \\
\cP_{0\phantom{i}} &:=\; \{\l : \l \notin \Sigma \cup \cP_1 \cup \cP_2\}, \\
\cP_{\phantom{i0}} &:=\; \cP_0 \textstyle\coprod \cP_1 \coprod \cP_2  = \{\l : \l \notin \Sigma\}.
\end{align*}
Define the {\em width} function $w : \cP \to \{0,1,2\}$ by
$w(\l) := i$ if $\l\in\cP_i$.
\end{defn}

Let $K(T)$ denote the field of definition of the elements of $T$, i.e., 
the fixed field in $\Kb$ of $\ker (G_K \to \Aut(T))$.

\begin{lem}
\label{4.2}
Suppose $\l$ is a prime of $K$, $\l \notin \Sigma$, and let $\Frob_\l \in \Gal(K(T)/K)$ 
be a Frobenius element for some choice of prime above $\l$.  Then
\begin{enumerate}
\item
$\l\in\cP_2$ if and only if $\Frob_\l = 1$,
\item
$\l\in\cP_1$ if and only if $\Frob_\l$ has order exactly $p$,
\item
$\l\in\cP_0$ if and only if $\Frob_\l^p \ne 1$.
\end{enumerate}
\end{lem}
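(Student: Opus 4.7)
The plan is to reduce everything to linear algebra over $\Fp$. Since $\l \notin \Sigma$, the representation $T$ is unramified at $\l$, so the action of $G_{K_\l}$ on $T$ factors through the quotient by inertia, which is topologically generated by $\Frob_\l$. Thus $T^{G_{K_\l}}$ coincides with the fixed subspace of $\Frob_\l$ acting on $T$, and we may view $\Frob_\l$ as an element of $\Aut(T) = \GL_2(\Fp)$.

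The key observation is that the pairing \eqref{weilpair} is $G_K$-equivariant, so $\bmu_p \subset K_\l$ if and only if $\Frob_\l$ acts trivially on $\wedge^2 T$, equivalently $\det(\Frob_\l) = 1$. From here I would run through the possible conjugacy classes of $\Frob_\l$ in $\GL_2(\Fp)$. For (i), $\dim T^{\Frob_\l} = 2$ clearly forces $\Frob_\l = 1$, and conversely $\Frob_\l = 1$ gives both $\dim T^{\Frob_\l} = 2$ and $\det(\Frob_\l) = 1$, hence $\l \in \cP_2$.

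For (ii), the elements of order exactly $p$ in $\GL_2(\Fp)$ are precisely the nontrivial unipotent ones; these have both eigenvalues equal to $1$, a one-dimensional fixed space, and determinant $1$, so $\l \in \cP_1$. Conversely, if $\l \in \cP_1$ then $\det(\Frob_\l) = 1$ and $\Frob_\l$ has $1$ as an eigenvalue; since the two eigenvalues multiply to $1$, the second is also $1$, and the hypothesis $\dim T^{\Frob_\l} = 1$ forces $\Frob_\l$ to be a nontrivial unipotent, hence of order $p$.

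Finally (iii) follows by elimination: an element of $\GL_2(\Fp)$ has order dividing $p$ if and only if its $p$-th power is trivial, and by (i) and (ii) these are exactly the elements corresponding to $\l \in \cP_1 \cup \cP_2$. Hence $\l \in \cP_0$ iff $\Frob_\l^p \ne 1$. The proof is essentially bookkeeping; the only conceptual step is converting $\bmu_p \subset K_\l$ into $\det(\Frob_\l) = 1$ via \eqref{weilpair}, and I anticipate no serious obstacle.
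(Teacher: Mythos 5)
Your proof is correct. The paper simply cites \cite[Lemma 4.3]{kmr} for this lemma, so there is no in-text proof to compare against, but the argument you give---translating $\bmu_p \subset K_\l$ into $\det(\Frob_\l)=1$ via \eqref{weilpair}, identifying $T^{G_{K_\l}}$ with $T^{\Frob_\l}$ since $T$ is unramified at $\l$, and then doing the $\GL_2(\Fp)$ bookkeeping (nontrivial unipotents are exactly the elements of order $p$, have determinant $1$ and a $1$-dimensional fixed space)---is the natural one and is surely what \cite{kmr} does. One small point worth making explicit in (iii): after (i) and (ii), the set $\{\Frob_\l = 1\} \sqcup \{\Frob_\l \text{ has order } p\}$ is exactly $\{\Frob_\l^p = 1\}$, and since $\cP = \cP_0 \sqcup \cP_1 \sqcup \cP_2$, the complement gives (iii); you say this, and it is fine.
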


\begin{proof}
\cite[Lemma 4.3]{kmr}
\end{proof}

\begin{defn}
\label{twistdata}
Suppose $T$, $\Sigma$ are as above, and $\bq$ is a global \metabolic structure on $T$.
By {\em twisting data} we mean
\begin{enumerate}
\item
for every $v \in \Sigma$, a (set) map 
$$
\alpha_v : \Xset(K_v)/\Aut(\bmu_p) = \Fset(K_v) \too \H(q_v),
$$
\item
for every $v \in \cP_2$, a bijection 
$$
\alpha_v : \Xset_\ram(K_v)/\Aut(\bmu_p) = \Fset_\ram(K_v) \too \H_\ram(q_v).
$$
\end{enumerate}
\end{defn}

\begin{defn}
\label{ddef}
Let
$$
\D := \{\text{squarefree products of primes $\l \in \cP_1 \cup \cP_2$}\},
$$
and if $\d\in\D$ let $\d_1$ (resp., $\d_2$) be the product of all primes 
dividing $\d$ that lie in $\cP_1$ (resp.,  $\cP_2$), so $\d = \d_1\d_2$.
For every $\d \in \D$, define also 
\begin{itemize}
\itemsep=5pt
\item
$
w(\d) := \sum_{\l\mid\d} w(\l) = |\{\l : \l\mid\d_1\}| + 2\cdot|\{\l : \l\mid\d_2\}|,
$
the {\em width} of $\d$,
\item
$\Sigma(\d) := \Sigma \cup \{\l : \l \mid \d\} \subset \Sigma \cup \cP_1 \cup \cP_2$,
\item
$
\Omega_\d := \prod_{v \in \Sigma}\Xset(K_v) \;\times 
   \prod_{\l \mid \d}\Xset_\ram(K_\l),
$
\item
$\Omega_\d^S := S \;\times \prod_{\l \mid \d}\Xset_\ram(K_\l)$ for every subset $S \subset \Omega_1 = \prod_{v \in \Sigma}\Xset(K_v)$,
\item
$\eta_{\d,\l} : \Omega^S_{\d\l} \to \Omega^S_{\d}$ the projection map, if $\d\l\in\D$.
\end{itemize}
\end{defn}

Note that $\D$ can be identified with the set of finite subsets of $\cP_1 \cup \cP_2$, 
as in \S\ref{avgs}.\ref{1.C}.
  
\begin{defn}
\label{sstwist}
Given $T$, $\bq$, and twisting data as in Definition \ref{twistdata}, we define 
a Selmer structure $\cS(\omega)$ for every $\d\in\D$ and 
$\omega = (\omega_v)_v \in\Omega_\d$ as follows.
\begin{itemize}
\item
Let $\Sigma_{\cS(\omega)} := \Sigma(\d)$.
\item
If $v \in \Sigma$ then let $H^1_{\cS(\omega)}(K_v,T) := \alpha_v(\omega_v)$,
\item
If $v \mid \d_1$, let $H^1_{\cS(\omega)}(K_v,T)$ be the unique element of $\H_\ram(q_v)$.
\item
If $v \mid \d_2$, let $H^1_{\cS(\omega)}(K_v,T) := \alpha_v(\omega_v) \in \H_\ram(q_v)$.
\end{itemize}
If $\omega\in\Omega_\d$ we will also write $\Sel(T,\omega) := H^1_{\cS(\omega)}(K,T)$.
\end{defn}

\begin{thm}
\label{kramer}
Suppose $\d \in \D$, $\omega \in \Omega_1$, and $\omega' \in \Omega_\d$.  Then
\begin{multline*}
\dim_{\F_p}\Sel(T,\omega) - \dim_{\F_p}\Sel(T,\omega')  \\\equiv 
   w(\d) + \sum_{v \in \Sigma}\dim_{\Fp}\alpha_v(\omega_v)/(\alpha_v(\omega_v)\cap\alpha_v(\omega_v')) \pmod{2}.
\end{multline*}
\end{thm}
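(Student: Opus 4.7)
The plan is to reduce Theorem \ref{kramer} to a general Poitou--Tate-style parity formula for comparing two Selmer structures attached to $(T,\bq)$, and then compute the resulting local terms case by case. The key general statement I would establish (or cite from \cite{kmr}) is that, for any two Selmer structures $\cS, \cS'$ for $(T,\bq)$ whose local conditions agree at all but finitely many places,
\begin{equation*}
\dim_{\F_p} H^1_\cS(K,T) - \dim_{\F_p} H^1_{\cS'}(K,T) \equiv \sum_v \dim_{\F_p} \frac{H^1_\cS(K_v,T)}{H^1_\cS(K_v,T) \cap H^1_{\cS'}(K_v,T)} \pmod{2}.
\end{equation*}
This congruence is symmetric in $\cS, \cS'$: for two Lagrangian subspaces $X, Y$ of a metabolic space $V$, both $\dim X/(X\cap Y)$ and $\dim Y/(X\cap Y)$ equal $(\dim V)/2 - \dim(X\cap Y)$. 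The formula follows from Poitou--Tate global duality applied to the self-dual module $T$ (via \eqref{weilpair}), with condition (iii) of Definition \ref{ahsdef} ensuring that the image of $H^1(K,T)$ in $\bigoplus_v H^1(K_v,T)$ is isotropic for $\bigoplus_v q_v$; this isotropy is what forces the Poitou--Tate comparison to depend only on the Lagrangians modulo their intersections.

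Next, I would apply the formula with $\cS = \cS(\omega)$ and $\cS' = \cS(\omega')$ and read off each local contribution. At places $v \notin \Sigma \cup \{\l : \l\mid\d\}$ both local conditions equal $\Hu(K_v,T)$, so the contribution is zero. At $v \in \Sigma$, the two local conditions are $\alpha_v(\omega_v)$ and $\alpha_v(\omega'_v)$, giving a contribution of exactly $\dim_{\F_p}\alpha_v(\omega_v)/(\alpha_v(\omega_v) \cap \alpha_v(\omega'_v))$. At a prime $\l\mid\d$, $H^1_{\cS(\omega)}(K_\l,T) = \Hu(K_\l,T)$ while $H^1_{\cS(\omega')}(K_\l,T)$ lies in $\H_\ram(q_\l)$; by the definition of $\H_\ram$ the two Lagrangians intersect trivially, so the contribution is $\dim_{\F_p}\Hu(K_\l,T) = d_\l = w(\l)$, using Lemma \ref{countlem}(ii) together with the fact that $\Hu(K_\l,T)$ is itself a Lagrangian (since $q_\l$ is unramified by Definition \ref{ahsdef}(ii)). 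Summing these contributions across $\l\mid\d$ gives exactly $w(\d)$, producing the stated formula $w(\d) + \sum_{v\in\Sigma}\dim_{\F_p}\alpha_v(\omega_v)/(\alpha_v(\omega_v)\cap\alpha_v(\omega'_v))$ modulo $2$.

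The main obstacle is the general parity formula in the first paragraph; everything else is routine local bookkeeping driven by the definitions. That formula is essentially the content of Poitou--Tate duality for a self-dual module with Lagrangian local conditions, and one expects it to have been set up (at least implicitly) in the companion paper \cite{kmr}. The self-duality provided by \eqref{weilpair} and the global isotropy in Definition \ref{ahsdef}(iii) are precisely what make this framework work: without them the Poitou--Tate comparison would involve dual Selmer groups as well, and only the combined parity could be controlled by intersection-based local terms. Granted the parity formula, Theorem \ref{kramer} follows by direct summation of the local contributions computed above.
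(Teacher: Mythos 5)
Your proposal is correct, and it matches the structure of the argument that the paper relies on: the paper's own proof is simply the citation \cite[Theorem 4.11]{kmr}, and the proof of that result in the companion paper proceeds exactly as you outline — a Poitou--Tate parity formula for comparing two Selmer structures with Lagrangian local conditions, followed by the case-by-case local computation. Your local bookkeeping is accurate (in particular, at $\l\mid\d$ the unramified Lagrangian $\Hu(K_\l,T)$ and the ramified Lagrangian meet trivially, so the contribution is $d_\l = w(\l)$ by Lemma \ref{countlem}), and you correctly flag the general parity congruence as the one substantive input, which is precisely what the cited theorem supplies.
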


\begin{proof}
\cite[Theorem 4.11]{kmr}
\end{proof}

\begin{rem}
By Lemma \ref{4.2} and the Cebotarev theorem, $\cP_2$ is a normed set with linear growth in the 
sense of Definition \ref{nsg}, and the same holds for $\cP_1$ if $p \mid [K(T):K]$.
(If $p \nmid [K(T):K]$ then Lemma \ref{4.2} shows that $\cP_1$ is empty.)

If $\d\in\D$ and $\omega\in\Omega_\d$, define $\rk(\omega) := \dim_{\Fp}\Sel(T,\omega)$.
For every choice of subset $S \subset \Omega_1$, 
the sets $\{\Omega^S_\d : \d\in\D\}$, together with the functions $\rk : \Omega^S_\d \to \Z_{\ge 0}$ 
and $\eta_{\d,\l}$, give rank data on $\D$ as in Definition \ref{rsdef} 
(using Proposition \ref{basic}(i) below).

We will show in \S\ref{csr} below that the rank data $\Omega^S$ is governed 
(in the sense of Definition \ref{govdef}) by 
the mod $p$ Lagrangian Markov operator $\ML$ of Definition \ref{MLdef}.  
We will then be able to apply Theorem \ref{ybox}.
\end{rem}

\section{Example: twists of elliptic curves}
\label{eex}

Fix for this section an elliptic curve $A$ defined over $K$, a prime $p$, 
and let $T := A[p]$.  We will show that this $T$ 
comes equipped with the extra structure that we require, and that with an appropriate 
choice of twisting data, the Selmer groups $\Sel(A[p],\chi)$ are classical $p$-Selmer groups 
of twists of $A$.

The module $T = A[p]$ satisfies the hypotheses of \S\ref{notation}, with the pairing 
\eqref{weilpair} given by the Weil pairing. Let $\Sigma$ be a finite set of places 
of $K$ containing all archimedean places, all places above $p$, and all primes where 
$A$ has bad reduction.
Let $\O$ denote the ring of integers of the cyclotomic field of $p$-th roots of unity, 
and $\p$ the (unique) prime of $\O$ above $p$.

If $p > 2$, there is a unique global \metabolic structure $\bq_A = (q_{A,v})$ on $A[p]$.  
For general $p$, there is a canonical 
global \metabolic structure $\bq_A$ on $A[p]$ constructed from the Heisenberg group, 
see \cite[\S4]{poonenrains} or the proof of \cite[Lemma 5.2]{kmr}.

We next define twisting data for $(A[p],\Sigma,\bq_A)$ in the sense of Definition \ref{twistdata}.

\begin{defn}
\label{etwist}
Suppose $\chi\in\Xset(K)$ (or $\chi \in \Xset(K_v)$) is nontrivial.  
If $p=2$ we let $A^\chi$ denote the quadratic twist of $A$ by $\chi$ over $K$ (resp., $K_v$).
For general $p$, let $F$ denote the cyclic
extension of $K$ (resp., $K_v$) of degree $p$ corresponding to $\chi$, 
and let $A^\chi$ denote the abelian variety 
denoted $A_F$ in \cite[Definition 5.1]{mrs}.  

Concretely, if $\chi \in \Xset(K)$ and $\chi\ne \one_K$ then $A^\chi$ is an abelian variety of 
dimension $p-1$ over $K$, defined to be the kernel of the canonical map
$$
\Res^F_K(A) \too A
$$
where $\Res^F_K(A)$ denotes the Weil restriction of scalars of $A$ from $F$ to $K$.
The character $\chi$ induces an inclusion $\O \subset \End_K(A^\chi)$ (see \cite[Theorem 5.5(iv)]{mrs}).  
If $\pi$ is a generator of the ideal $\p$ of $\O$, then we denote by 
$\Sel_\pi(A^\chi/K)$ the usual $\pi$-Selmer group of $A^\chi/K$.
In particular when $p=2$, $\Sel(A[2],\chi) = \Sel_2(A^\chi/K)$ is 
the classical $2$-Selmer group of $A^\chi/K$.

For $\chi \in \Xset(K)$, let $\bq_{A^\chi} = (q_{A^\chi,v})$ be the unique global \metabolic structure 
on $A^\chi[\p]$ if $p>2$, and if $p=2$ 
we let $\bq_{A^\chi}$ be the canonical global \metabolic structure on the elliptic curve $A^\chi$.
\end{defn}

If $p=2$, then the two definitions above of $A^\chi$ agree, with $\O = \Z$, and $\p = 2$.

\begin{lem}
\label{sameT}
There is a canonical $G_K$-isomorphism $A^\chi[\p] \cong A[p]$, which 
identifies $q_{A^\chi,v}$ with $q_{A,v}$ for every $v$ 
and every $\chi\in\Xset(K_v)$.
\end{lem}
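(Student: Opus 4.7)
The plan is to build a canonical $G_K$-equivariant isomorphism $\iota_\chi : A^\chi[\p] \isom A[p]$ first, and then verify that $\iota_\chi$ carries $q_{A^\chi,v}$ to $q_{A,v}$ at every place $v$, treating $p > 2$ and $p = 2$ separately. For the isomorphism I would unpack the description of $A^\chi$ as the kernel of the trace-like map $\Res^F_K(A) \to A$, following \cite[Theorem 5.5]{mrs}. Fixing the generator $\pi = 1-\zeta_p$ of $\p$, the $\O$-action on $A^\chi$ is induced from the permutation action of $\Gal(F/K)$ on $\Res^F_K(A)$; a direct computation on $\Kb$-points identifies $A^\chi[\p]$ with the diagonal copy $\{(x,\ldots,x) : x \in A[p]\} \subset \Res^F_K(A)[p]$, with $G_K$ acting coordinate-wise via the standard action on $A[p]$. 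The resulting diagonal embedding gives the canonical isomorphism $A[p] \isom A^\chi[\p]$ of $\F_p[G_K]$-modules, well-defined up to the $\Aut(\bmu_p)$-action already implicit in Definition \ref{etwist}. When $p = 2$ this reduces to the classical fact that a quadratic twist preserves $A[2]$ as a $G_K$-module.

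For the compatibility of quadratic forms when $p$ is odd there is essentially nothing to check: a Tate quadratic form on $H^1(K_v,T)$ is uniquely determined by its associated bilinear form via $q(x) = 2^{-1}(x,x)_q$, and both $q_{A,v}$ and $q_{A^\chi,v}$ refine the same local Tate pairing attached to the Weil pairing on $T$, which is preserved by $\iota_\chi$ (cf.\ \cite[Theorem 5.5(vi)]{mrs}).

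The main obstacle is the case $p = 2$, where the quadratic refinement is not determined by the bilinear pairing and the Heisenberg-group construction of \cite[\S4]{poonenrains} must be analyzed explicitly. My plan is to follow the strategy of \cite[Lemma 5.2]{kmr}: the local forms $q_{A,v}$ are built by a construction that is manifestly functorial under $K_v$-isomorphisms of elliptic curves and behaves well under Galois descent along the quadratic extension $F_v/K_v$ cut out by $\chi_v$. Since $A$ and $A^\chi$ become canonically isomorphic over $F_v$, functoriality gives equality of the two Heisenberg extensions over $F_v$; the difference $q_{A^\chi,v} - \iota_\chi^* q_{A,v}$ on $H^1(K_v,T)$ is then a linear form that must vanish by a short $\Gal(F_v/K_v)$-descent argument, using inflation-restriction to control the kernel of $H^1(K_v,T) \to H^1(F_v,T)$ in terms of $H^1(\Gal(F_v/K_v),T^{G_{F_v}})$, on which the linear form can be checked to be trivial. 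Pinning down this last descent computation cleanly is what I expect to absorb most of the work.
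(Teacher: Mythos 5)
The paper's own proof of this lemma is a bare citation to \cite[Lemma~5.2]{kmr}, so there is no in-paper argument to compare against; I can only assess your sketch on its merits.

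Your construction of the isomorphism $\iota_\chi$ is correct: unwinding the Weil-restriction description of $A^\chi$ and the $\O$-action via $\zeta_p \mapsto \sigma$, the $\pi$-torsion is exactly the $\sigma$-fixed diagonal subject to the trace-zero constraint, which is $\{(x,\dots,x):x\in A[p]\}$ with the standard coordinate-wise $G_K$-action. Your $p>2$ argument is also complete, and indeed is immediate given the paper's remark that for $p>2$ the global metabolic structure is unique (so one only needs $\iota_\chi$ to respect the pairing \eqref{weilpair}).

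The $p=2$ case, which you correctly identify as the crux, is not actually proved as written. The issue is the step ``the linear form can be checked to be trivial.'' The naive version of the descent you sketch --- compare $q_{A,v}$ and $q_{A^\chi,v}$ by restricting to $F_v$, where $A\cong A^\chi$, and control the kernel of $H^1(K_v,T)\to H^1(F_v,T)$ by inflation-restriction --- has a concrete obstruction: the quadratic forms take values in $H^2(K_v,\bmu_2)$ resp.\ $H^2(F_v,\bmu_2)$, and since $[F_v:K_v]=2$, the restriction map $H^2(K_v,\bmu_2)\to H^2(F_v,\bmu_2)$ is zero. So ``agreement of the quadratic forms over $F_v$'' gives you no direct information about their values on $H^1(K_v,T)$, and the linear form $\ell := q_{A^\chi,v}-\iota_\chi^* q_{A,v}$ cannot be shown to vanish by this route. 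What you actually need is a statement at the level of the Heisenberg extensions over $K_v$ (not just after base change to $F_v$): namely that $\iota_\chi$ extends to an isomorphism of the central extensions $1\to\bmu_2\to H_{A}\to A[2]\to 1$ and $1\to\bmu_2\to H_{A^\chi}\to A^\chi[2]\to 1$ as $G_{K_v}$-groups. That comparison requires an argument about the Galois descent data for the theta groups (using that the symmetric line bundle $\mathcal L$ of \cite[\S4]{poonenrains} and its twist pull back to each other under $\iota_\chi$, together with $[-1]^*\mathcal L\cong\mathcal L$ so the $\chi\otimes[-1]$-twist does not change the descent datum for $\mathcal L$ on $2$-torsion), which your sketch gestures at but does not carry out. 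Without that, the $p=2$ case is a gap.
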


\begin{proof}
\cite[Lemma 5.2]{kmr}
\end{proof}

\begin{defn}
\label{etwda}
Let $\pi$ denote any generator of the ideal $\p$ of $\O$.
If $v$ is a place of $K$ and $\chi \in \Xset(K_v)$, define $\alpha_v(\chi)$ to be the image of the 
composition of the Kummer ``division by $\pi$'' map with the isomorphism of Lemma \ref{sameT}(i)
$$
\alpha_v(\chi) := \image\biggl(A^\chi(K_v)/\p A^\chi(K_v) \hookto H^1(K_v,A^\chi[\p]) 
   \isom H^1(K_v,A[p])\biggr).
$$
\end{defn}

Note that $\alpha_v(\chi)$ is independent of the choice of generator $\pi$.
For every place $v$ and $\chi\in\Xset(K_v)$, \cite[Lemma 5.4]{kmr} shows that $\alpha_v(\chi) \in \H(q_{A,v})$.

\begin{prop}
\label{6.4}
\begin{enumerate}
\item
The maps $\alpha_v$ of Definition \ref{etwda}, for $v \in \Sigma$ and $v \in \cP_2$, 
give twisting data as in Definition \ref{twistdata}.
\item
Suppose $\chi \in \Xset(K)$, and let $\d$ be the part of the conductor of $\chi$ 
supported on $\cP_1 \cup \cP_2$.
With the twisting data of (i), and any generator $\pi$ of $\p$, we have 
$$
\Sel_\pi(A^\chi/K) \cong \Sel(A[p],\omega)
$$
where $\omega = (\ldots,\chi_v,\ldots)_{v \in\Sigma(\d)} \in \Omega_\d$ with $\chi_v \in \Xset(K_v)$ the 
restriction of $\chi$ to $G_{K_v}$.
\end{enumerate}
\end{prop}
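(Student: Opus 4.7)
The plan is to verify the two parts in sequence, leaning on the already-cited Lemma 5.4 of \cite{kmr} for the basic structural facts about $\alpha_v$. For part (i), well-definedness on $\Fset(K_v) = \Xset(K_v)/\Aut(\bmu_p)$ is immediate, because the twist $A^\chi$ depends only on the cyclic extension $\bar K_v^{\ker \chi}$ cut out by $\chi$, so $\alpha_v$ is invariant under the $\Aut(\bmu_p)$-action on characters, and the cited lemma places $\alpha_v(\chi) \in \H(q_{A,v})$. To refine this to a bijection $\Fset_\ram(K_v) \to \H_\ram(q_v)$ at $v \in \cP_2$, I first observe that a ramified $\chi$ gives $\alpha_v(\chi) \in \H_\ram(q_v)$ (the ramified twist $A^\chi$ acquires bad reduction at $v$, forcing its Kummer image to meet $\Hu(K_v,A[p])$ trivially), and then do a cardinality count: Lemma \ref{countlem}(iii) yields $|\H_\ram(q_v)| = p^{d_v-1} = p$, while Kummer theory together with the free $\Aut(\bmu_p)$-action on nontrivial characters gives $|\Fset_\ram(K_v)| = (p^2-p)/(p-1) = p$. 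With both sides of size $p$, bijectivity reduces to injectivity, which I would establish from the explicit description of the Kummer image of each ramified twist under the isomorphism of Lemma \ref{sameT}.

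For part (ii), the strategy is to unwind both sides and match local conditions place by place. By definition $\Sel_\pi(A^\chi/K)$ is the subgroup of $c \in H^1(K, A^\chi[\p])$ satisfying $c_v \in \image(A^\chi(K_v)/\p A^\chi(K_v))$ for every $v$; transporting along the canonical $G_K$-isomorphism $A^\chi[\p] \cong A[p]$ of Lemma \ref{sameT} turns this into the condition $c_v \in \alpha_v(\chi_v)$ at every $v$. For $v \in \Sigma$ and for $v \mid \d_2$ this matches Definition \ref{sstwist} directly. For $v \mid \d_1$ the local character $\chi_v$ is ramified, so $\alpha_v(\chi_v) \in \H_\ram(q_v)$; and since $v \in \cP_1$ forces $d_v = 1$, Lemma \ref{countlem}(iii) gives $|\H_\ram(q_v)| = 1$, so $\alpha_v(\chi_v)$ must coincide with the unique ramified Lagrangian demanded by $\cS(\omega)$.

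The most delicate step is at $v \notin \Sigma(\d)$, where $\cS(\omega)$ imposes the unramified condition $\Hu(K_v, A[p])$ and we need $\alpha_v(\chi_v) = \Hu(K_v,A[p])$. When $v \in \cP_1 \cup \cP_2$ with $v \nmid \d$, the character $\chi_v$ is unramified by the definition of $\d$ as the part of the conductor supported on $\cP_1 \cup \cP_2$, and the classical identification of the Kummer image at a place of good reduction under an unramified twist with $\Hu$ handles this case. The main obstacle is the $\cP_0$ case, where $\chi_v$ may be ramified yet we still require $\alpha_v(\chi_v) = \Hu$. Here I would split on the dichotomy built into the definition of $\cP_0$: if $T^{G_{K_v}} = 0$, then Lemma \ref{countlem}(i) gives $H^1(K_v, A[p]) = 0$ and there is nothing to check; otherwise $\bmu_p \not\subset K_v$, and a restriction-to-$K_v(\bmu_p)$ argument, exploiting that $[K_v(\bmu_p):K_v]$ divides $p-1$ and is therefore coprime to $p$, shows that the Kummer image at $v$ lies in $\Hu(K_v,A[p])$ regardless of the ramification of $\chi_v$.
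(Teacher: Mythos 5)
The paper does not prove this proposition directly; it cites \cite[Propositions 5.8 and 5.9]{kmr}. You attempt a self-contained proof, which is a different (and in principle legitimate) route, and your overall reduction strategy for part (ii) — transport along the canonical isomorphism $A^\chi[\p] \cong A[p]$ and then match local conditions place by place — is the right one. However, you leave genuine gaps exactly where the real content of the cited propositions lies, and you introduce an error in the $\cP_0$ case.

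The central gap is the assertion that for ramified $\chi$ at a place $v \in \cP_1 \cup \cP_2$ of good reduction, the Kummer image $\alpha_v(\chi)$ lies in $\H_\ram(q_v)$, i.e., meets $\Hu(K_v,T)$ trivially. You justify this by saying the twist ``acquires bad reduction at $v$, forcing its Kummer image to meet $\Hu$ trivially,'' but additive reduction does not, by itself, imply the Kummer image is transverse to the unramified Lagrangian; this requires an actual local computation (e.g., via norm groups, as in Kramer's work or \cite[Proposition 5.8]{kmr}). The same unproved claim is reused in part (ii) for $v \mid \d_1$. Likewise, the bijectivity of $\alpha_v : \Fset_\ram(K_v) \to \H_\ram(q_v)$ for $v \in \cP_2$ is reduced by your cardinality count to injectivity, but you only say you ``would establish'' injectivity from the explicit Kummer image, without doing so; that is again the substance of \cite[Proposition 5.8]{kmr}, not a routine step. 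These two claims are the entire technical heart of part (i), so the proof as written does not establish it.

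There is also a confusion in the handling of $v \in \cP_0$ with $\bmu_p \not\subset K_v$. You worry that $\chi_v$ might be ramified there and propose a restriction-to-$K_v(\bmu_p)$ argument to show $\alpha_v(\chi_v) = \Hu(K_v,T)$ ``regardless of the ramification of $\chi_v$.'' But this case is vacuous: for $v \nmid p$ with $\bmu_p \not\subset K_v$, the group $\O_v^\times/(\O_v^\times)^p$ is trivial, so there are no ramified $\bmu_p$-valued characters of $G_{K_v}$ — $\chi_v$ is automatically unramified, and one is simply in the unramified-twist case. Moreover, the proposed restriction argument does not actually prove the stronger claim: if $\chi_v$ were ramified, so would be its restriction to $K_v(\bmu_p)$ (the degree $[K_v(\bmu_p):K_v]$ is prime to $p$, so $\chi_v$ remains nontrivial and ramified on $G_{K_v(\bmu_p)}$), and the Kummer image over $K_v(\bmu_p)$ would then be a ramified Lagrangian there, giving no control of the unramified condition downstairs. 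The correct observation is the much simpler one that no ramified $\chi_v$ exists at such $v$.
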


\begin{proof}
\cite[Propositions 5.8 and 5.9]{kmr}
\end{proof}

\section{Changing Selmer ranks}
\label{csr}

In this section we study how the Selmer rank changes when we change one local condition, 
i.e., we study $\dim_{\Fp}\Sel(T,\omega) - \dim_{\Fp}\Sel(T,\bar\omega)$ when 
$\omega\in\Omega_{\d\l}$ projects to $\bar\omega\in\Omega_\d$.
Proposition \ref{basic} evaluates this difference in terms of the 
dimension of the localization $\loc_\l(\Sel(T,\bar\omega))$, and Proposition \ref{probs} describes 
the distribution of the values $\dim_{\Fp}\loc_\l(\Sel(T,\bar\omega))$ as $\l$ varies.

For the rest of this paper we
fix $T$ and $\Sigma$ as in \S\ref{notation}, a global \metabolic structure $\bq$ on $T$ 
as in Definition \ref{ahsdef}, 
and twisting data as in Definition \ref{twistdata}.  
Recall that $K(T)$ is the field of definition of the elements of $T$, i.e., 
the fixed field in $\Kb$ of $\ker (G_K \to \Aut(T))$.

For the rest of this paper we assume also that
\begin{equation}
\label{h2a} 
\Pic(\O_{K,\Sigma}) = 0,
\end{equation}
and
\begin{equation}
\label{h2b}
\O_{K,\Sigma}^\times/(\O_{K,\Sigma}^\times)^p \too \prod_{v \in \Sigma} K_v^\times/(K_v^\times)^p \quad
   \text{is injective},
\end{equation}
where $\O_{K,\Sigma}$ is the ring of $\Sigma$-integers of $K$, i.e., the 
elements that are integral at all $\l \notin \Sigma$.
Lemma 6.1 of \cite{kmr} shows that \eqref{h2a} and \eqref{h2b} can always be satisfied by 
enlarging $\Sigma$ if necessary.

Recall the set $\D$, and for $\d\in\D$ the sets $\Sigma(\d)$, $\Omega_\d$, and 
$\Xset(\d)$, all from Definition \ref{ddef}.  
If $\d \in \D$ and $\omega \in \Omega_\d$, recall that $\rk(\omega) := \dim_{\Fp}\Sel(T,\omega)$, 
and if $\d\l \in\D$, let $\eta_{\d,\l} : \Omega_{\d\l} \to \Omega_\d$ be the natural projection.

\begin{prop}
\label{basic}
Suppose $\d \in \D$, $\bar\omega \in \Omega_\d$, and $\l \in \cP_1 \cup \cP_2$ and $\l\nmid\d$.  Let 
$$
t(\l) = t(\bar\omega,\l) := \dim_{\Fp} \image(\Sel(T,\bar\omega) \map{\loc_\l} H^1_\ur(K_\l,T)).
$$
\begin{enumerate}
\item
We have $|\eta_{\d,\l}^{-1}(\bar\omega)| = p(p-1)$.
\item
Suppose $\l \in \cP_1$ and $\omega \in \eta_{\d,\l}^{-1}(\bar\omega) \subset \Omega_{\d\l}$. 
Then $0 \le t(\l) \le 1$, and
$$
\rk(\omega) = 
\begin{cases}
\rk(\bar\omega) - 1 & \text{if $t(\l) = 1$,}\\
\rk(\bar\omega) + 1 & \text{if $t(\l) = 0$.}
\end{cases}
$$
\item
Suppose $\l \in \cP_2$.  Then $0 \le t(\l) \le 2$, and 
$$
\rk(\omega) = 
\begin{cases}
\rk(\bar\omega) - 2 & \text{if $t(\l) = 2$, for every $\omega\in \eta_{\d,\l}^{-1}(\bar\omega)$}\\
\rk(\bar\omega) & \text{if $t(\l) = 1$, for every $\omega\in \eta_{\d,\l}^{-1}(\bar\omega)$,}\\
\rk(\bar\omega)+2 & \text{if $t(\l) = 0$, for exactly $p-1$ of the $\omega\in\eta_{\d,\l}^{-1}(\bar\omega)$,}\\ 
\rk(\bar\omega) & \text{if $t(\l) = 0$, for all other $\omega\in\eta_{\d,\l}^{-1}(\bar\omega)$}.
\end{cases}
$$
\end{enumerate}
\end{prop}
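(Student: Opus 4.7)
My plan is to derive the identity
\[
\rk(\omega)-\rk(\bar\omega) \;=\; \dim(\PS \cap X_\omega) - t(\l),
\]
where $\PS \subset V := H^1(K_\l,T)$ is a suitable Lagrangian and $X_\omega \subset V$ is the Lagrangian local condition at $\l$ specified by $\omega$, and then to read off all cases by combining this identity with the parity statement of Theorem \ref{kramer}. Part (i) is first a direct count: since $\bmu_p \subset K_\l$ and $\l \nmid p$, local class field theory identifies $\Xset(K_\l)$ with $K_\l^\times/(K_\l^\times)^p$, a group of order $p^2$ whose unramified subgroup has order $p$; hence $|\Xset_\ram(K_\l)| = p(p-1)$, which is also the size of the fiber $\eta_{\d,\l}^{-1}(\bar\omega)$ since $\eta_{\d,\l}$ merely forgets the $\l$-component.

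To set up the formula I will introduce the relaxed Selmer group $\Sel^\l(T,\bar\omega)$ (with no condition imposed at $\l$) and set $\PS := \loc_\l(\Sel^\l(T,\bar\omega))$. The key point is that $\PS$ is a Lagrangian of $(V, q_\l)$: for $q_\l$-isotropy, every $c \in \Sel^\l(T,\bar\omega)$ satisfies $q_v(c_v) = 0$ at each $v \ne \l$ (those local conditions being Lagrangian), so the global reciprocity of Definition \ref{ahsdef}(iii) forces $q_\l(c_\l) = 0$; and $\PS = \PS^\perp$ follows from Poitou-Tate global duality applied to the self-dual $T$ (cf.\ \cite[Lemma 3.7]{kmr}), which identifies $\PS^\perp$ with the image at $\l$ of the strict Selmer group, contained in $\PS$. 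The tautological sequence $0 \to \Sel(T,\omega) \to \Sel^\l(T,\bar\omega) \to V/X_\omega$, whose image is $\PS/(\PS\cap X_\omega)$, then gives the identity, with $X_{\bar\omega} = \Hu(K_\l,T)$ contributing the term $t(\l)$.

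With the formula in hand, Theorem \ref{kramer} gives $\rk(\omega)-\rk(\bar\omega) \equiv w(\l) \pmod 2$, hence $\dim(\PS\cap X_\omega) \equiv t(\l)+w(\l) \pmod 2$. For (ii), $V$ has dimension $2$ and Lemma \ref{countlem}(iii) provides a unique ramified Lagrangian $X_1$, so $\PS \in \{\Hu(K_\l,T), X_1\}$ and the two subcases fall out immediately. For (iii), $V$ has dimension $4$ with $2$-dimensional Lagrangians, so $\dim(\PS\cap X_\omega) \in \{0,1,2\}$; the parity constraint forces this value to lie in $\{0,2\}$ when $t(\l) \in \{0,2\}$ and to equal $1$ when $t(\l) = 1$. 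When $t(\l)=2$, $\PS = \Hu$ and hence $\PS \cap X_\omega = 0$ for every ramified $X_\omega$; when $t(\l)=1$ the intersection must equal $1$; and when $t(\l)=0$ the intersection is $0$ or $2$, and equals $2$ precisely when $X_\omega = \PS$, since any other ramified Lagrangian is $2$-dimensional and distinct from $\PS$. The ``exactly $p-1$'' count comes from Definition \ref{twistdata}(ii): $\alpha_\l$ is a bijection between $\Xset_\ram(K_\l)/\Aut(\bmu_p)$ and the $p$ ramified Lagrangians, so the single Lagrangian $\PS$ is the image of exactly $|\Aut(\bmu_p)| = p-1$ characters in the fiber.

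The main obstacle is the Lagrangian claim $\PS = \PS^\perp$; this is where Poitou-Tate global duality and the self-dual structure encoded by \eqref{weilpair} and the global metabolic axioms do the essential work, and once it is in hand all the remaining case analysis is a routine combination of dimension counts with parity.
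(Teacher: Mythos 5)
Your proposal is correct and follows essentially the same route as the paper's own proof: both define the relaxed-at-$\l$ Selmer group, use the metabolic axiom (Definition \ref{ahsdef}(iii)) plus Poitou--Tate duality to show its $\l$-localization is Lagrangian, derive the same identity $\rk(\omega)-\rk(\bar\omega)=\dim(\PS\cap X_\omega)-t(\l)$, invoke Theorem \ref{kramer} for parity in case (iii), and use the bijection $\alpha_\l : \Xset_\ram(K_\l)/\Aut(\bmu_p)\to\H_\ram(q_\l)$ for the ``exactly $p-1$'' count. One minor imprecision: your parenthetical gloss that Poitou--Tate ``identifies $\PS^\perp$ with the image at $\l$ of the strict Selmer group'' cannot be right as written, since the strict group has trivial image at $\l$ by definition; the correct form (which the paper cites from Milne and Tate) is that $\loc_\l$ of the relaxed group is its own exact annihilator under the local Tate pairing, but your conclusion $\PS=\PS^\perp$ is the right one and the rest of the argument is unaffected.
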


\begin{proof}
For the first assertion we have $|\eta_{\d,\l}^{-1}(\bar\omega)| = |\Xset_\ram(K_\l)| = p(p-1)$.

Let $\cS(\bar\omega)$ be the Selmer structure of Definition \ref{sstwist}.
Define 
\begin{align*}
\Sel(T,\bar{\omega})^{(\l)} &:= \ker (H^1(K,T) \map{\oplus \loc_v} \dirsum{v\ne\l}H^1(K_v,T)/H^1_{\cS(\bar{\omega})}(K_v,T)), \\
\Sel(T,\bar{\omega})_{(\l)} &:= \ker (H^1_{\cS(\bar{\omega})}(K_v,T)^{(\l)} \map{\loc_\l} H^1(K_\l,T)).
\end{align*}
Then we have  
$\Sel(T,\bar{\omega})_{(\l)} \subset \Sel(T,\bar{\omega}) \subset \Sel(T,\bar{\omega})^{(\l)}$, and 
if $\omega \in \eta_{\d,\l}^{-1}(\bar\omega)$ then 
$\Sel(T,\bar{\omega})_{(\l)} \subset \Sel(T,{\omega}) \subset \Sel(T,\bar{\omega})^{(\l)}$ as well.

Let $V := \loc_\l(\Sel(T,\bar{\omega})^{(\l)}) \subset H^1(K_\l,T)$.  Poitou-Tate global duality 
(see for example \cite[Theorem I.4.10]{milne} or \cite[Theorem 3.1]{tate}) shows that 
$V$ is a maximal isotropic subspace of $H^1(K_\l,T)$ with respect to the local Tate pairing, and 
by Definition \ref{ahsdef}(iii), the quadratic form $q_\l$ vanishes on $V$, so $V \in \H(q_\l)$.  
In particular if $\l\in\cP_i$, then by Lemma \ref{countlem},
$$
\textstyle
\dim_{\Fp}V = \frac{1}{2}\dim_{\Fp}H^1(K_\l,T) = i.
$$

Let $V_\ur := \Hu(K_\l,T) \in \H(q_\l)$, the unramified subspace.
Suppose that $\omega \in \eta_{\d,\l}^{-1}(\bar\omega)$, and let $\omega_\l$ be its $\l$-component.  
If $i = 1$ let $V_{\omega_\l}$ be the unique element of $\H_\ram(q_\l)$, and if $i = 2$ 
let $V_{\omega_\l} := \alpha_\l(\omega_\l)$, where $\alpha_\l : \Xset(K_\l) \to \H_\ram(q_\l)$ 
is part of the given twisting data.  Then by definition we have exact sequences
\begin{gather*}
0 \too \Sel(T,\bar{\omega})_\l \too \Sel(T,\bar{\omega}) \map{\loc_\l} V \cap V_\ur \too 0 \\
0 \too \Sel(T,\bar{\omega})_\l \too \Sel(T,{\omega}) \map{\loc_\l} V \cap V_{\omega_\l} \too 0,
\end{gather*}
and $t(\l) = \dim_{\Fp}(V \cap V_\ur)$.
We deduce that 
\begin{equation}
\label{diff}
\rk(\omega) - \rk(\bar\omega) = \dim_{\Fp}(V \cap V_{\omega_\l}) - t(\l).
\end{equation}

Suppose first that $\l\in\cP_1$, so $i=1$.  We have $V \in \H(q_\l) = \{V_\ur,V_{\omega_\l}\}$, 
and $\dim_{\Fp}(V_\ur) = \dim_{\Fp}(V_{\omega_\l}) = 1$. 
If $V = V_\ur$ then $t(\l) = 1$ and $V \cap V_{\omega_\l} = 0$, and if $V = V_{\omega_\l}$ then 
$t(\l) = 0$ and $V \cap V_{\omega_\l} = V$.  Now (ii) follows from \eqref{diff}.

Next, suppose that $\l\in\cP_2$.  By Theorem \ref{kramer} we have 
$
\rk(\omega) \equiv \rk(\bar\omega) \pmod{2},
$
and by definition $V_{\omega_\l} \cap V_\ur = 0$.

If $t(\l) = 2$, then $V = V_\ur$, so $V \cap V_{\omega_\l} = 0$ and 
$\rk(\omega) = \rk(\bar\omega) - 2$ by \eqref{diff}.  

If $t(\l) = 1$, then \eqref{diff} shows that $\dim_{\Fp}(V \cap V_{\omega_\l})$ must be odd.  Therefore 
$\dim_{\Fp}(V \cap V_{\omega_\l}) = 1$ and $\rk(\omega) = \rk(\bar\omega)$.

If $t(\l) = 0$, then $V \in \H_\ram(q_\l)$, and 
\eqref{diff} shows that $\dim_{\Fp}(V \cap V_{\omega_\l})$ must be even, so 
$\dim_{\Fp}(V \cap V_{\omega_\l}) = 0$ or $2$.  But $\dim_{\Fp}(V \cap V_{\omega_\l}) = 2$ 
if and only if $V_{\omega_\l} = V$.  Since $\alpha_\l : \Xset(K_\l)/\Aut(\bmu_p) \to \H_\ram(q_\l)$ 
is a bijection, there are exactly $p-1 = |\Aut(\bmu_p)|$ characters $\omega_\l \in \Xset(K_\l)$
such that $V_{\omega_\l} = V$.
Now the last part of (iii) follows from \eqref{diff}.
\end{proof}

\begin{cor}
\label{basiccor}
Suppose $\d \in \D$ and $\omega \in \Omega_\d$.  Then 
$$
\rk(\omega) \le w(\d) + \max\{\rk(\omega') : \omega'\in\Omega_1\}.
$$
\end{cor}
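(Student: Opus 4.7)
The plan is to induct on the number of primes dividing $\d$, using Proposition \ref{basic} to control the change in Selmer rank each time one prime is removed from the level.

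More concretely, write $\d = \l_1\l_2\cdots\l_m$ with each $\l_j \in \cP_1\cup\cP_2$, and let $\d_j := \l_1\cdots\l_j$ (with $\d_0 = 1$). Successive application of the projections $\eta_{\d_{j-1},\l_j}$ produces a sequence $\omega = \omega_m \mapsto \omega_{m-1}\mapsto\cdots\mapsto\omega_0$ with $\omega_j \in \Omega_{\d_j}$ and $\omega_0 \in \Omega_1$. By Proposition \ref{basic}(ii), if $\l_j \in \cP_1$ then $|\rk(\omega_j) - \rk(\omega_{j-1})| = 1 = w(\l_j)$, while by Proposition \ref{basic}(iii), if $\l_j \in \cP_2$ then $\rk(\omega_j) - \rk(\omega_{j-1}) \in \{-2,0,2\}$, so in particular $|\rk(\omega_j) - \rk(\omega_{j-1})| \le 2 = w(\l_j)$.

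Telescoping then gives
$$
\rk(\omega) - \rk(\omega_0) \;=\; \sum_{j=1}^m \bigl(\rk(\omega_j)-\rk(\omega_{j-1})\bigr) \;\le\; \sum_{j=1}^m w(\l_j) \;=\; w(\d),
$$
and since $\omega_0 \in \Omega_1$ we conclude $\rk(\omega) \le w(\d) + \max\{\rk(\omega') : \omega' \in \Omega_1\}$, as required.

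There is no real obstacle here: the entire content of the corollary is already packaged in Proposition \ref{basic}, which shows that each one-prime step changes the rank by at most the width of that prime. The only thing to check is that the width is additive under removal of primes (immediate from the definition $w(\d) = \sum_{\l\mid\d} w(\l)$), which makes the telescoping work cleanly.
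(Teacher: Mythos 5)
Your proof is correct and is essentially the same as the paper's, which also reduces to $\Omega_1$ one prime at a time via Proposition \ref{basic} and observes that each step changes the rank by at most the width of the removed prime. You have simply written out the induction and telescoping explicitly.
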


\begin{proof}
Let $\eta_1 : \Omega_\d \to \Omega_1$ be the natural projection.
By Proposition \ref{basic} and induction we have $\rk(\omega) \le \rk(\eta_1(\omega)) + w(\d)$, 
and the corollary follows.
\end{proof}

\section{An effective Cebotarev theorem}
\label{eCt}

\begin{thm}
\label{cebdata}
There is a nondecreasing function $\Ceb : [1,\infty) \to [1,\infty)$ such that for
\begin{itemize}
\item
every $Y \ge 1$,
\item
every $\d\in\D$ with $\N\d < Y$,
\item
every Galois extension $F$ of $K$ that is abelian of exponent $p$ over $K(T)$, 
and unramified outside of $\Sigma(\d)$,
\item
every pair of subsets $S, S' \subset \Gal(F/K)$ stable under conjugation, with $S$ nonempty, and 
\item
every $X > \Ceb(Y)$,
\end{itemize}
we have
$$
\biggl|\frac{|\{\l\notin\Sigma(\d) : \N\l \le X, \Frob_\l(F/K) \in S'\}|}
   {|\{\l\notin\Sigma(\d) : \N\l \le X, \Frob_\l(F/K) \in S\}|} - \frac{|S'|}{|S|}\biggr|
   \le \frac{1}{Y}
$$
(and in particular $\{\l\notin\Sigma(\d) : \N\l \le X, \Frob_\l(F/K) \in S\}$ is nonempty).
\end{thm}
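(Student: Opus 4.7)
\medskip

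The plan is to reduce this statement to an off-the-shelf effective Chebotarev density theorem for $F/K$ (in the style of Lagarias--Odlyzko, or Serre's effective version thereof) by showing that the two key numerical invariants of $F$, namely $[F:\Q]$ and $\log|\mathrm{disc}(F/\Q)|$, can be bounded in terms of $Y$ uniformly over all $\d$ and all allowable $F$ in the statement. Once these bounds are in hand, $\Ceb(Y)$ will be chosen as the threshold needed to make both the ratio $|\{\l : \Frob_\l \in S'\}|/\mathrm{Li}(X)$ and the ratio $|\{\l : \Frob_\l \in S\}|/\mathrm{Li}(X)$ close enough to $|S'|/|G|$ and $|S|/|G|$ that dividing yields an error $\le 1/Y$; this is harmless since $|S| \ge 1$ forces $|S|/|G| \ge 1/[F:\Q]$, and the worst case is absorbed once $X$ is a sufficiently large function of $[F:\Q]$ and $\log|\mathrm{disc}(F/\Q)|$.

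First I would bound $[F:K(T)]$. Since $\bmu_p \subset K(T)$ (as $T$ carries the Weil pairing to $\bmu_p$), and $F/K(T)$ is abelian of exponent $p$ unramified outside the primes of $K(T)$ above $\Sigma(\d)$, Kummer theory identifies $\Gal(F/K(T))$ with a subgroup of an $\F_p$-vector space whose dimension is at most $|\Sigma_{K(T)}(\d)| + \dim_{\F_p}\mathrm{Pic}(\O_{K(T),\Sigma(\d)})[p]$; by a standard class-field-theoretic argument this is bounded by $c_0 + c_1 |\Sigma(\d)|$ for constants $c_0, c_1$ depending only on $K$, $T$, $p$. Since $|\Sigma(\d)| \le |\Sigma| + \omega(\d) \ll 1 + \log Y$, this gives
$$
[F:K(T)] \le p^{c_0 + c_1(|\Sigma| + \log Y)} \le Y^{c_2}
$$
for $Y$ large, for some constant $c_2 = c_2(K,T,\Sigma,p)$.

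Next I would bound $\log|\mathrm{disc}(F/\Q)|$ via the conductor-discriminant formula applied stepwise along $\Q \subset K \subset K(T) \subset F$. At a prime $\l$ of $K$ with $\l \nmid p$ in $\Sigma(\d)$, the ramification in $F/K(T)$ is tame (exponent $p$ is coprime to the residue characteristic), so each ramified prime above $\l$ contributes at most $[F:K(T)]\log\N\l$ to the discriminant exponent over $K(T)$. At primes above $p$ the wild contribution is bounded by $[F:K(T)]\cdot C$ for a constant $C$ depending only on $[K(T):\Q]$ and $p$, since $F/K(T)$ has exponent $p$. Summing and using $\sum_{\l\mid\d}\log\N\l \le \log Y$ gives
$$
\log|\mathrm{disc}(F/\Q)| \;\le\; [F:\Q]\bigl(c_3 + c_4 \log Y\bigr) \;\le\; Y^{c_5}
$$
for some $c_5$, once Step 1 is folded in.

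Finally I would invoke a concrete effective Chebotarev theorem --- either the unconditional version of Lagarias--Odlyzko \emph{\`a la} Serre, or its GRH-assuming counterpart --- which produces an explicit $X_0([F:\Q],\log|\mathrm{disc}(F/\Q)|,\epsilon)$ such that for $X \ge X_0$, every conjugation-stable $S \subset \Gal(F/K)$ satisfies $\bigl| |\{\l : \N\l \le X,\,\Frob_\l \in S\}|/\mathrm{Li}(X) - |S|/|G|\bigr| \le \epsilon \cdot |S|/|G|$. Choosing $\epsilon = 1/(3Y)$ and applying this to both $S$ and $S'$, then dividing, yields the desired error bound; setting $\Ceb(Y) := \sup_F X_0$, where the supremum is over $F$ allowed by the bounds of Steps 1--2, defines the required function. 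The main obstacle here is exactly Step 3: one wants an \emph{unconditional} effective Chebotarev whose threshold $X_0$ depends only polynomially (or at worst quasi-polynomially) on $[F:\Q]$ and $\log|\mathrm{disc}(F/\Q)|$; this is where potential Siegel-zero issues would surface, but for our purposes it suffices to quote an existing such bound, since we already have polynomial-in-$Y$ control on the invariants of $F$ from Steps 1--2.
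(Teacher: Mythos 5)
Your proof follows essentially the same route as the paper's: bound $[F:\Q]$ and $\log D_F$ polynomially in $Y$ using that $F/K(T)$ is abelian of exponent $p$ and unramified outside $\Sigma(\d)$, then invoke an effective Chebotarev theorem whose threshold is controlled by these two invariants, handling the Siegel-zero obstruction by quoting an explicit bound on exceptional zeros (the paper cites \cite{starksiegel} for precisely this). One small quantitative slip: $\epsilon = 1/(3Y)$ is not small enough, because after dividing the two Chebotarev estimates the resulting error on $|S'|/|S|$ is multiplicative and $|S'|/|S|$ can be as large as $|\Gal(F/K)|$; since your Step 1 bounds $[F:K]$ polynomially in $Y$, replacing $1/(3Y)$ by a suitable inverse polynomial in $Y$ fixes this without otherwise changing the argument.
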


\begin{proof}
This follows from standard effective versions of the 
Cebotarev theorem (see for example \cite[\S2, Theorems 2 and 4]{eff.ceb.}) 
together with the observations that
\begin{itemize}
\item
$[F:\Q]$ is bounded by $c_1 p^{c_2w(\d)}$ with constants $c_1, c_2$ depending only on $K(T)$ 
and $\Sigma$,
\item
the absolute discriminant $D_F$ of $F$ is bounded by $\N\d^{[K:\Q]}$ times 
a constant depending only on $K$ and $\Sigma$,
\item
the exceptional (Siegel) zeros of $\zeta_F(s)$ are bounded away from $1$ 
by a constant depending only on $[F:\Q]$ and $D_F$ (see for example 
\cite[Lemmas 8 and 11]{starksiegel}).
\end{itemize}
\end{proof}

\section{The governing Markov operator}
\label{gMo}

For the rest of the paper, we suppose that the image of the 
map $G_K \to \Aut(T)$ is large enough so that the following three properties 
hold:
\begin{gather}
\label{bighyp0}
\text{$T$ is a simple $G_K$-module}, \\
\label{bighyp1}
\Hom_{G_{K(\bmu_p)}}(T,T) = \Fp, \\
\label{bighyp2}
H^1(K(T)/K,T) = 0.
\end{gather}   

\begin{rem}
For example, \eqref{bighyp0}, \eqref{bighyp1}, and \eqref{bighyp2} hold 
if the image of the natural map $G_K \to \Aut(T) \cong \GL(T)$ contains $\SL(T)$ 
or the nor\-malizer of a Cartan subgroup.  If $p=2$ then these conditions hold 
if and only if $\Gal(K(T)/K) \cong S_3$.
\end{rem}

\begin{defn}
\label{fdxdef}
Suppose $\d\in\D$ and $\omega\in\Omega_\d$.  
Let $\Res_{K(T)}$ denote the composition 
\begin{equation}
\label{resn}
H^1(K,T) \too H^1(K(T),T)^{\Gal(K(T)/K)} = \Hom(G_{K(T)},T)^{\Gal(K(T)/K)}.
\end{equation}
Let $F_{\d,\omega}$ be the smallest extension of $K(T)$ 
such that for every $c \in \Sel(T,\omega)$, the homomorphism $\Res_{K(T)}c : G_{K(T)} \to T$ 
factors through $\Gal(F_{\d,\omega}/K(T))$.  In other words, $F_{\d,\omega}$ is the fixed field 
of $\cap_{c \in \Sel(T,\omega)}\ker(\Res_{K(T)}c)$.
\end{defn}

\begin{prop}
\label{cebhyp}
For every $\d\in\D$ and $\omega\in\Omega_\d$:
\begin{enumerate}
\item
There is a $\Gal(K(T)/K)$-module isomorphism $\Gal(F_{\d,\omega}/K(T)) \cong T^{\rk(\omega)}$.
\item
The map $\Res_{K(T)} : \Sel(T,\omega) \to \Hom(G_{K(T)},T)$ induces isomorphisms 
\begin{gather*}
\Sel(T,\omega) \isom \Hom(\Gal(F_{\d,\omega}/K(T)),T)^{\Gal(K(T)/K)}, \\
\Gal(F_{\d,\omega}/K(T)) \isom \Hom(\Sel(T,\omega),T)
\end{gather*}
\item
$F_{\d,\omega}/K$ is unramified outside of $\Sigma(\d)$.
\end{enumerate}
\end{prop}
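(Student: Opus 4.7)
The plan is to handle the three assertions together. Set $r := \rk(\omega)$, $V := \Sel(T,\omega)$, $G := \Gal(K(T)/K)$, and $W := \Gal(F_{\d,\omega}/K(T))$, and fix an $\Fp$-basis $c_1,\dots,c_r$ of $V$. Since $G_{K(T)}$ acts trivially on $T$, we have $H^1(K(T),T) = \Hom(G_{K(T)},T)$, and inflation-restriction together with \eqref{bighyp2} makes the map $\Res_{K(T)} : H^1(K,T) \to \Hom(G_{K(T)},T)^G$ injective. The intersection $N := \bigcap_i \ker(\Res_{K(T)}(c_i))$ is normal in $G_K$ because each $\Res_{K(T)}(c_i)$ is $G$-equivariant, so $F_{\d,\omega}/K$ is Galois. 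Every $c \in V$ is unramified at each $\l \notin \Sigma(\d)$ by the defining local conditions of the Selmer structure, so $\Res_{K(T)}(c)$ is trivial on the inertia above such $\l$; this immediately gives (iii).

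The tuple $\sigma \mapsto (\Res_{K(T)}(c_i)(\sigma))_i$ defines a $G$-equivariant homomorphism $G_{K(T)} \to T^r$ with kernel $N$, so there is a $G$-module injection $W \hookto T^r$. The heart of the argument is that this injection is surjective. Suppose for contradiction $W \subsetneq T^r$. By \eqref{bighyp0} the module $T$ is simple over $\Fp[G]$, and the filtration $0 \subset T \subset T^2 \subset \dots \subset T^r$ shows that every Jordan--H\"older constituent of $T^r$ is isomorphic to $T$; hence $T^r/W$ admits a simple quotient isomorphic to $T$, and there is a nonzero $G$-equivariant map $\psi : T^r \to T$ with $\psi|_W = 0$. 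Hypothesis \eqref{bighyp1} gives $\End_H(T) = \Fp$ for $H := \Gal(K(T)/K(\bmu_p)) \subseteq G$, which forces $\End_G(T) = \Fp$ and therefore $\Hom_G(T^r,T) = \Fp^r$, spanned by the coordinate projections $\pi_1,\dots,\pi_r$. Writing $\psi = \sum_i a_i \pi_i$ with not all $a_i = 0$, the vanishing $\psi|_W = 0$ reads $\Res_{K(T)}\bigl(\sum_i a_i c_i\bigr) = 0$, and injectivity of $\Res_{K(T)}$ on $H^1(K,T)$ forces $\sum_i a_i c_i = 0$, contradicting the linear independence of the $c_i$.

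This gives $W \cong T^r$ as $G$-modules, which is (i). For (ii), the second isomorphism $W \isom \Hom(\Sel(T,\omega),T)$, $\sigma \mapsto (c \mapsto \Res_{K(T)}(c)(\sigma))$, coincides with the embedding $W \hookto T^r$ after the identification $T^r \cong \Hom(V,T)$ induced by the chosen basis, so it is a bijection. The first isomorphism $V \isom \Hom(W,T)^G$ is injective by \eqref{bighyp2}, and its target is $\Hom_G(T^r,T) \cong \Fp^r$ of the same $\Fp$-dimension as $V$, so it is bijective as well.

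The main obstacle is the middle step proving $W = T^r$; this is the only place where \eqref{bighyp0}, \eqref{bighyp1}, and \eqref{bighyp2} are all used essentially---respectively to control the composition factors of $T^r$, to pin down $\Hom_G(T^r,T)$ as $\Fp^r$, and to upgrade the vanishing of a restricted homomorphism to vanishing of the underlying cohomology class.
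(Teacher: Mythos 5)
Your proof is correct and uses essentially the same ingredients as the paper's: the $G$-equivariant injection $W\hookto T^r$ built from $\Res_{K(T)}c_i$, simplicity of $T$ from \eqref{bighyp0} to control composition factors, $\End_G(T)=\Fp$ from \eqref{bighyp1} to pin down $\Hom_G(T^r,T)\cong\Fp^r$, and injectivity of $\Res_{K(T)}$ from \eqref{bighyp2}. The paper argues directly that $W$ is semisimple, hence $W\cong T^j$, and then computes $j=r$ by comparing $\dim\Hom(W,T)^G$ from both sides, whereas you argue surjectivity by contradiction via a simple quotient of $T^r/W$; this is only a cosmetic reorganization of the same argument.
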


\begin{proof}
Let $G := \Gal(K(T)/K)$ and $r := \rk(\omega)$.  Fix a basis $\{c_1,\ldots,c_r\}$ of 
$\Sel(T,\omega)$, and for each $i$ let $\tilde{c}_i = \Res_{K(T)}c_i \in \Hom(G_{K(T)},T)^G$.  
Then 
\begin{equation}
\label{7}
\tilde{c}_1 \times \cdots \times \tilde{c}_r : \Gal(F_{\d,\omega}/K(T)) \too T^r.
\end{equation}
is a $G$-equivariant injection.  Let $W$ be the $\Fp[G]$-module $\Gal(F_{\d,\omega}/K(T))$.  
Since $W$ is isomorphic to a $G$-invariant submodule of the semisimple module $T^r$, $W$ is also semisimple.  
If $U$ is an irreducible constituent of $W$, then $U$ is also an irreducible constituent of $T^r$, 
so $U \cong T$.  Therefore $W \cong T^j$ for some $j$.  Then $\dim_{\Fp}\Hom(W,T)^G = j$ 
by our assumption that $\Hom_{G_K}(T,T) = \Fp$.
On the other hand, since $H^1(K(T)/K,T) = 0$ by \eqref{bighyp2}, we have that \eqref{resn} 
is injective, so $\tilde{c}_1, \ldots, \tilde{c}_r$ are $\Fp$-linearly independent 
and $\dim_{\Fp}\Hom(W,T)^G \ge r$.  Therefore $j = r$, so \eqref{7} is an isomorphism and
(i) holds.  The two displayed maps of (ii) are injective by definition, and both sides 
of the first map (resp., second map) have order $p^r$ (resp., $p^{2r}$), so both 
maps are isomorphisms.

By Definition \ref{sstwist}, every $c \in \Sel(T,\omega)$ is unramified outside of $\Sigma(\d)$, 
so each $\Res_{K(T)}c$ is unramified outside of $\Sigma(\d)$, so $F_{\d,\omega}/K$ 
is unramified outside of $\Sigma(\d)$.
\end{proof}

\begin{prop}
\label{probs}
Fix $\d \in \D$, and $\omega \in \Omega_\d$.  
For every $\l \notin \Sigma(\d)$ let 
$$
t(\l) = t(\omega,\l) := \dim_{\Fp} \image(\Sel(T,\omega) \map{\loc_\l} H^1_\ur(K_\l,T))
$$
as in Proposition \ref{basic}, and let $c_{i,j}$ be given by the following table:
$$
\renewcommand{\arraystretch}{1.25}
\begin{array}{|r||c|c|c|}
\hline
& j=0 & j=1 & j=2 \\
\hline\hline
i=1 & p^{-\rk(\omega)} & 1-p^{-\rk(\omega)} & \\
\hline
i=2 & p^{-2\rk(\omega)} & (p+1)(p^{-\rk(\omega)}-p^{-2\rk(\omega)}) & 
1-(p+1) p^{-\rk(\omega)}+p^{1-2\rk(\omega)} \\
   \hline
\end{array}
$$
Then for $i = 2$ and $j = 0, 1, 2$, we have
$$
\lim_{X \to \infty} \frac{|\{\l\in \cP_i(X) : \l\nmid\d, t(\l) = j\}|}{|\{\l\in \cP_i(X) : \l\nmid\d\}|} = c_{i,j}.
$$
More precisely, if $\Ceb$ is a function satisfying Theorem \ref{cebdata}, 
then for every $Y > \N\d$ and every $X > \Ceb(Y)$ we have
$$
\biggl|\,\frac{|\{\l\in \cP_i(X) : \l\nmid\d, t(\l) = j\}|}{|\{\l\in \cP_i(X) : \l\nmid\d\}|} - c_{i,j}\,\biggr|
   \le \frac{1}{Y}. 
$$
If $p \mid [K(T):K]$ then the same is true for $i=1$, $j = 0,1$.
\end{prop}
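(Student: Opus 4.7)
The plan is to express $t(\l)$ as a group-theoretic condition on the Frobenius class of $\l$ in $\Gal(F/K)$, where $F := F_{\d,\omega}$ and $G := \Gal(K(T)/K)$, and then read off the densities using the effective Cebotarev theorem (Theorem~\ref{cebdata}). By Proposition~\ref{cebhyp}, $F$ is Galois over $K$, abelian of exponent $p$ over $K(T)$, unramified outside $\Sigma(\d)$, and carries a $G$-equivariant identification $\Gal(F/K(T)) \cong T^r$ (with $r = \rk(\omega)$) induced by a basis $c_1,\dots,c_r$ of $\Sel(T,\omega)$. Since $t(\l)$ depends only on $\loc_\l$, which in turn depends only on the decomposition subgroup at $\l$ up to conjugation, the condition $t(\l) = j$ cuts out a conjugation-stable subset of $\Gal(F/K)$ and Theorem~\ref{cebdata} applies.

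I would handle the case $\l \in \cP_2$ first. By Lemma~\ref{4.2}, this is the Cebotarev condition $\bar\Frob_\l = 1$ in $G$, so Frobenius lies in $T^r \subset \Gal(F/K)$. Under the identification $H^1_\ur(K_\l,T) \cong T$ by evaluation at Frobenius, one checks that $\loc_\l(c_i) = \tilde c_i(\Frob_\l)$; writing $\Frob_\l = (x_1,\dots,x_r) \in T^r$, the image of $\Sel(T,\omega)$ in $H^1_\ur(K_\l,T)$ is $\mathrm{span}_{\Fp}\{x_1,\ldots,x_r\} \subset T$. A direct enumeration gives $1$, $(p+1)(p^r-1)$, and $p^{2r}-(p+1)p^r+p$ tuples of span dimension $0$, $1$, and $2$ respectively. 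Applying Theorem~\ref{cebdata} with $S=T^r$ and $S'$ the tuples of span dimension $j$ then yields $c_{2,j}$ with the asserted error bound $1/Y$ for $Y \ge \N\d$ and $X \ge \Ceb(Y)$.

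The case $\l \in \cP_1$, which requires $p \mid [K(T):K]$, runs in the same spirit but is more delicate because Frobenius no longer lies in $T^r$. Here $\sigma := \bar\Frob_\l$ has order exactly $p$ in $G$, so $H^1_\ur(K_\l,T) \cong T/(\sigma-1)T$ is one-dimensional. Fixing any lift $\tau_0 \in \Gal(F/K)$ of $\sigma$ and writing $\Frob_\l = \tau_0 z$ with $z = (x_1,\ldots,x_r) \in T^r$, the cocycle relation $c_i(\tau_0 z) = c_i(\tau_0) + \sigma(x_i)$ reduces modulo $(\sigma-1)T$ to
$$
\loc_\l(c_i) \equiv \overline{c_i(\tau_0) + x_i} \pmod{(\sigma-1)T}.
$$
Hence $t(\l) = 0$ iff the image of $z$ in $T_\sigma^r := T^r/(\sigma-1)T^r$ equals the fixed tuple $(-\overline{c_i(\tau_0)})_i$; since the surjection $T^r \to T_\sigma^r$ has kernel of size $p^r$, exactly $p^r$ out of the $p^{2r}$ elements of the Frobenius fiber over $\sigma$ produce $t(\l) = 0$. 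Summing over the conjugation-stable set of all order-$p$ elements of $G$ and invoking Theorem~\ref{cebdata} gives $c_{1,0} = p^{-r}$ (and thus $c_{1,1} = 1-p^{-r}$), with the same effective bound.

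The main obstacle is the precise identification of $t(\l)$ with Frobenius data so that the resulting subsets of $\Gal(F/K)$ can be enumerated cleanly. Once one recognizes that the image of $\Sel(T,\omega)$ in $H^1_\ur(K_\l,T)$ is controlled by how the chosen basis evaluates on the Frobenius class, the count in $T^r$ (for $\cP_2$) and the count of the $T_\sigma^r$-valued obstruction (for $\cP_1$) produce exactly the densities in the table, and Theorem~\ref{cebdata} upgrades the asymptotic statement to the effective bound $1/Y$.
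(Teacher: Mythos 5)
Your proposal is correct and is essentially the paper's own argument: both prove the result by identifying $t(\l)$ with a conjugation-stable Frobenius condition in $\Gal(F_{\d,\omega}/K)$ via Proposition~\ref{cebhyp} (which supplies the $G$-equivariant isomorphism $\Gal(F_{\d,\omega}/K(T)) \cong T^r$, equivalently $\phi\colon \Gal(F_{\d,\omega}/K(T)) \isom \Hom(\Sel(T,\omega),T)$), then counting and applying Theorem~\ref{cebdata}. Your bookkeeping in terms of tuples $(x_1,\dots,x_r)\in T^r$ and, in the $\cP_1$ case, a fixed lift $\tau_0$ with $\Frob_\l=\tau_0 z$ and the fiber of $T^r\to T_\sigma^r$ matches the paper's counts $|R_0|=1$, $|R_1|=(p+1)(p^r-1)$, $|R_2|=p^{2r}-(p+1)p^r+p$ and $|S'|/|S|=p^{-r}$ exactly.
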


\begin{proof}
Let $r := \rk(\omega)$, 
let $F_{\d,\omega}$ be the field of Definition \ref{fdxdef}, and 
for every $\l \notin \Sigma(\d)$ let $\Frob_\l \in \Gal(F_{\d,\omega}/K)$ denote a Frobenius 
automorphism for some choice of prime above $\l$.
We need to interpret the different values of $t(\l)$ as Frobenius conditions 
on $\l$.  By Lemma \ref{4.2}, $\l \in \cP_1$ if and only if $\Frob_\l|_{K(T)}$ has order $p$, 
and $\l \in \cP_2$ if and only if $\Frob_\l|_{K(T)} = 1$.

Suppose $\l \notin \Sigma(\d)$.  Then $\Hu(K_\l,T) \cong T/(\Frob_\l-1)T$, 
with the isomorphism given by evaluating $1$-cocycles on $\Frob_\l$ 
(see for example \cite[\S XIII.1]{serrecg}).  Thus $t(\l)$ is the $\Fp$-dimension 
of the subspace
$$
\{c(\Frob_\l) : \text{$c$ a cocycle representing a class in $\Sel(T,\omega)$}\} \subset T/(\Frob_\l-1)T.
$$
Let $\phi : \Gal(F_{\d,\omega}/K(T)) \isom \Hom(\Sel(T,\omega),T)$ be the isomorphism 
of Proposition \ref{cebhyp}(ii).

We first consider the case $\l\in\cP_2$, or equivalently $\Frob_\l \in \Gal(F_{\d,\omega}/K(T))$, 
so $T/(\Frob_\l-1)T = T$.  For $0 \le j \le 2$ let 
$$
R_j := \{f \in \Hom(\Sel(T,\omega),T) : \dim_{\Fp}\image(f) = j\}
$$
and let $S_j := \phi^{-1}(R_j) \subset \Gal(F_{\d,\omega}/K(T)) \subset \Gal(F_{\d,\omega}/K)$.
Then 
$$
t(\l) = j \iff \dim_{\Fp}\{c(\Frob_\l) : c \in \Sel(T,\omega)\} = j \iff \Frob_\l \in S_j.  
$$
Set $S' := S_j$ and $S := \Gal(F_{\d,\omega}/K(T))$.  Since $\Ceb$ satisfies 
Theorem \ref{cebdata} (and using Proposition \ref{cebhyp}(iii)), for every $X > \Ceb(Y)$
we have
$$
\biggl|\,\frac{|\{\l\in \cP_2(X), \l\nmid\d : t(\l) = j\}|}{|\{\l\in \cP_2(X) : \l\nmid\d\}|} 
   - \frac{|R_j|}{[F_{\d,\omega}:K(T)]}\,\biggr|  \le \frac{1}{Y}. 
$$
By Proposition \ref{cebhyp}(i) we have $[F_{\d,\omega}:K(T)] = p^{2r}$. 
Clearly $|R_0| = 1$.  We can decompose $R_1$ into a disjoint union, over the $p+1$ lines 
$\ell \subset T$, of the nonzero elements of $\Hom(\Sel(T,\omega),\ell)$.  Thus 
$|R_1| = (p+1)(p^r-1)$, and
$$
|R_2| = p^{2r} - |R_0| - |R_1| = p^{2r} - (p+1)(p^r-1) - 1 = p^{2r} - (p+1)p^r + p.
$$
This proves the proposition when $i = 2$.

Now suppose $p \mid [K(T):K]$, so that $\cP_1$ is nonempty.  
Suppose $\l\in\cP_1$, or equivalently $\Frob_\l|_{K(T)}$ has order $p$, 
so $T/(\Frob_\l-1)T$ has dimension $1$.  
Let 
\begin{multline*}
S' := \{g \in \Gal(F_{\d,\omega}/K) : 
    \text{$g|_{K(T)}$ has order $p$} \\ 
    \text{and $c(g) \in (g-1)T$ for every $c \in \Sel(T,\omega)$}\}
\end{multline*}
(note that $c(g)$ is well-defined in $T/(g-1)T$, independent of the choice of cocycle 
representing $c$).
Then $S'$ is closed under conjugation, and $t(\l) = 0$ if and only if $\Frob_\l \in S'$.
If we set $S :=  \{g \in \Gal(F_{\d,\omega}/K) : \text{$g|_{K(T)}$ has order $p$}\}$ 
then again since $\Ceb$ satisfies Theorem \ref{cebdata}, for every $X > \Ceb(Y)$ we have
$$
\biggl|\,\frac{|\{\l\in \cP_1(X) : t(\l) = 0\}|}{|\{\l\in \cP_1(X) : \l\nmid\d\}|} - \frac{|S'|}{|S|}\,\biggr|
   \le \frac{1}{Y}. 
$$

It remains to compute $|S'|/|S|$.  Let $U := \{g \in \Gal(K(T)/K) : \text{$g$ has order $p$}\}$.
Then $|S| = |U|[F_{\d,\omega}:K(T)] = p^{2r}|U|$.

Suppose $g \in \Gal(F_{\d,\omega}/K)$ and $g|_{K(T)}\in U$.  
Evaluation at $g$ induces a homomorphism $\lambda_g : \Sel(T,\omega) \to T/(g-1)T$, 
and we have $g \in S'$ if and only if $\lambda_g$ is identically zero.  
If $h \in \Gal(F_{\d,\omega}/K(T))$, then in $T/(g-1)T = T/(gh-1)T$ we have 
$$
\lambda_{gh}(c) = c(gh) = c(g) + gc(h) = \lambda_g(c) + c(h) \quad \text{for every $c \in \Sel(T,\omega)$}.
$$
Thus $gh \in S'$ if and only if the image of $h$ under the composition 
$$
\Gal(F_{\d,\omega}/K(T)) \map{~\phi~} \Hom(\Sel(T,\omega),T) \onto \Hom(\Sel(T,\omega),T/(g-1)T).
$$
is equal to $-\lambda_g$.  Since $\phi$ is an isomorphism, there are exactly $p^r$ such $h$.  
It follows that the restriction map $S' \to U$ is surjective, and all fibers have order 
$p^r$.  Therefore $|S'| = p^r|U|$, which proves the proposition when $i = 1, j= 0$.  
The result for $i=j=1$ follows since 
$$
\{\l\in \cP_1 : \l\nmid\d\} = \{\l\in \cP_1 : \l\nmid\d, t(\l) = 0\} 
   \coprod \{\l\in \cP_1 : \l\nmid\d, t(\l) = 1\}.
$$
\end{proof}

\begin{thm}
\label{gov}
For every subset $S \subset \Omega_1$, the rank data $\Omega^S$ on $\D$ is governed 
(in the sense of Definition \ref{govdef}) by the mod $p$ Lagrangian 
Markov operator $\ML$ of Definition \ref{MLdef}, and every function $\Ceb$ 
satisfying Theorem \ref{cebdata} is a convergence rate for $(\Omega^S,\ML)$.
\end{thm}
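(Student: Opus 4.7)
The plan is to unpack Definition~\ref{govdef} directly using Propositions~\ref{basic} and~\ref{probs}. Fix $\d \in \D$, $\bar\omega \in \Omega^S_\d$, $i \in I$, and $s \ge 0$, and write $r = \rk(\bar\omega)$. By Proposition~\ref{basic}(i) every fiber $\eta_{\d,\l}^{-1}(\bar\omega)$ has size $p(p-1)$, so the denominator in \eqref{conv} equals $p(p-1)\,|\cP_i(X)-\d|$. By Proposition~\ref{basic}(ii,iii) the number of $\chi \in \eta_{\d,\l}^{-1}(\bar\omega)$ with $\rk(\chi) = s$ depends only on the integer $t(\l) = t(\bar\omega,\l)$, so the numerator is a linear combination, with integer coefficients drawn from $\{0, p-1, (p-1)^2, p(p-1)\}$, of the three counts $|\{\l \in \cP_i(X)-\d : t(\l) = j\}|$ for $j \in \{0,1,2\}$.

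Next, I would apply Proposition~\ref{probs} to replace each of these counts by its asymptotic density $c_{i,j}$, and then verify by direct matrix calculation that the resulting combination equals the matrix entry $m^{(i)}_{r,s}$ of $\ML^i$. For $i=1$ the only nonzero cases $s = r\pm 1$ reduce immediately to the identities $c_{1,1} = 1-p^{-r} = m_{r,r-1}$ and $c_{1,0} = p^{-r} = m_{r,r+1}$. For $i=2$ one expands $\ML^2$ via matrix multiplication and checks three identities; the only delicate one is at $s=r$, where
\[
m^{(2)}_{r,r} = c_{2,1} + \frac{p-1}{p}\,c_{2,0},
\]
the factor $(p-1)/p$ appearing because in the $t(\l)=0$ fiber only $p-1$ of the $p(p-1)$ characters raise the rank by $2$, while $(p-1)^2$ preserve it. The same mechanism produces the entry $m^{(2)}_{r,r+2} = p^{-2r-1} = c_{2,0}/p$; the identity $m^{(2)}_{r,r-2} = c_{2,2}$ is even more immediate.

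For the convergence rate, the effective form of Proposition~\ref{probs} bounds the error in each $c_{i,j}$-estimate by $1/Y$ whenever $X > \Ceb(Y)$ and $Y > \N(\d)$. Since the combinations above involve at most three terms with coefficients bounded independently of $r$ and $s$, the total error in the ratio \eqref{cdefi} is bounded by a universal constant times $1/Y$; this can be absorbed into $\Ceb$ by replacing $\Ceb(Y)$ with $\Ceb(CY)$ for a suitable constant $C$. The only real obstacle along the way is the bookkeeping for the $i = 2$, $t(\l) = 0$ case, where the fiber is not rank-homogeneous and one must correctly weight the $p-1$ rank-raising characters against the $(p-1)^2$ rank-preserving ones; every other verification is routine arithmetic already encoded in the table of Proposition~\ref{probs}.
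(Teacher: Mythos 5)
Your proposal reproduces the paper's proof essentially step by step: the paper likewise fixes $\d$, $\omega$, $r=\rk(\omega)$, uses Proposition~\ref{basic}(i,ii,iii) to express the ratio in \eqref{conv} (which it calls $F_i(X,s)$) in terms of the frequencies $\Phi_{i,j}(\d,X)$ of the values $t(\l)=j$, applies Proposition~\ref{probs} to get $\lim_{X\to\infty}\Phi_{i,j}=c_{i,j}$, and then checks by the same matrix arithmetic that the resulting limits are the entries of $\ML$ and $\ML^2$. Your identities $c_{1,1}=m_{r,r-1}$, $c_{1,0}=m_{r,r+1}$, $c_{2,2}=m^{(2)}_{r,r-2}$, $c_{2,1}+\frac{p-1}{p}c_{2,0}=m^{(2)}_{r,r}$, $\frac{1}{p}c_{2,0}=m^{(2)}_{r,r+2}$ are all correct, and the weighting of the $t(\l)=0$, $i=2$ fiber into $p-1$ rank-raising versus $(p-1)^2$ rank-preserving characters matches Proposition~\ref{basic}(iii) exactly. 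One small point worth flagging: you correctly observe that, because $F_2(X,r)$ is a two-term combination of the $\Phi_{2,j}$, the na\"ive error propagation from Proposition~\ref{probs} gives a bound of order $(1+1/p)/Y$ rather than $1/Y$, so $\Ceb$ itself does not obviously satisfy Definition~\ref{cebdef} on the nose; your suggestion to pass to $Y\mapsto\Ceb(CY)$ is a harmless repair and does not affect any downstream use (Proposition~\ref{-1}, Theorem~\ref{ybox} tolerate a constant factor), but be aware that the paper's own proof simply asserts ``using the more precise convergence in Proposition~\ref{probs} shows that $\Ceb$ is a convergence rate'' without addressing this constant, so you have in fact spotted a minor imprecision in the source rather than introduced a deviation.
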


\begin{proof}
Fix $\d\in\D$ and $\omega \in \Omega_\d^S$, and let $r := \rk(\omega)$.  For $\l\in\cP_1\cup\cP_2$, 
$\l\nmid \d$, as in Propositions \ref{basic} and \ref{probs} we define
$$
t(\l) := \dim_{\Fp} \image(\Sel(T,\omega) \map{\loc_\l} H^1_\ur(K_\l,T)).
$$
If $X > 0$ and $\cP_i(X)$ is nonempty, define 
\begin{gather*}
F_i(X,s) := \frac{\sum_{\l\in\cP_i(X),\l\nmid\d}|\{\chi\in\eta_{\d,\l}^{-1}(\omega) : \rk(\chi) = s\}|}
      {\sum_{\l\in\cP_i(X),\l\nmid\d}|\eta_{\d,\l}^{-1}(\omega)|}, \\[5pt]
\Phi_{i,j}(\d,X) := 
   \frac{|\{\l\in \cP_i(X) : \l\nmid\d, t(\l) = j\}|}{|\{\l\in \cP_i(X) : \l\nmid\d\}|}. 
\end{gather*}

If $\cP_1$ is nonempty, i.e., $p \mid [K(T):K]$,
then Proposition \ref{basic}(i,ii) shows that 
\begin{align*}
F_1(X,s) &= 
\begin{cases}
0 & \text{if $s \ne r \pm 1$},\\[2pt]
\Phi_{1,1}(\d,X)  & \text{$s = r-1$}, \\[4pt]
\Phi_{1,0}(\d,X)  & \text{$s = r+1$}.
\end{cases} \\
\intertext{
Similarly, Proposition \ref{basic}(i,iii) shows that 
}
F_2(X,s) &= 
\begin{cases}
0 & \text{if $s \ne r$ or $r \pm 2$},\\[2pt]
\Phi_{2,2}(\d,X) & \text{$s = r-2$}, \\[4pt]
\Phi_{2,1}(\d,X) + \frac{p-1}{p}\Phi_{2,0}(\d,X) & \text{$s = r$}, \\[4pt]
\frac{1}{p} \Phi_{2,0}(\d,X) & \text{$s = r+2$}.
\end{cases}
\end{align*}
Proposition \ref{probs} computes $\lim_{X \to \infty}\Phi_{i,j}(\d,X)$ for $j \le i$, 
giving
\begin{align*}
\lim_{X \to \infty}F_1(X,s) &= 
\begin{cases}
0 & \text{if $s \ne r \pm 1$},\\
1 - p^{-r} & \text{$s = r-1$}, \\
p^{-r} & \text{$s = r+1$}.
\end{cases} \\
\lim_{X \to \infty}F_2(X,s) &= 
\begin{cases}
0 & \text{if $s \ne r$ or $r \pm 2$},\\
1 - (p+1)p^{-r} + p^{1-2r} & \text{$s = r-2$}, \\
p^{1-r} + p^{-r} - p^{1-2r} - p^{-1-2r} & \text{$s = r$}, \\
p^{-1-2r} & \text{$s = r+2$}.
\end{cases}
\end{align*}
The right-hand values above are equal to the matrix entries in $\ML$ and $\ML^2$, 
so this shows that $\ML$ governs the rank data for $\Omega^S$ for every $S$.  
Using the more precise convergence in Proposition \ref{probs} shows that 
$\Ceb$ is a convergence rate for $(\Omega^S,\ML)$.
\end{proof}

\section{Passage from global characters to semi-local characters}
\label{lgc}

We continue to assume that \eqref{bighyp0}, \eqref{bighyp1} and \eqref{bighyp2} all hold.

Theorems \ref{ybox} and \ref{gov} give us the machinery we need 
to see how Selmer ranks are distributed over the twists by
collections of local characters.  
However, we want to compute the distribution of Selmer 
ranks over twists by global characters.
In this section we use class field theory to study the map 
from global characters to collections of local characters.  
More precisely, we make the following definitions.

\begin{defn}
\label{defeta}
Recall that $\Xset(K) = \Hom(G_K,\bmu_p)$.  
If $\chi \in \Xset(K)$ and $v$ is a place of $K$, we let $\chi_v \in \Xset(K_v)$ 
denote the restriction of $\chi$ to $G_{K_v}$.
For $\d\in\D$, define 
\begin{multline*}
\Xset(\d) := \{\chi \in \Xset(K) : \text{$\chi$ is ramified at all $\l$ dividing $\d$} \\
\text{and unramified outside of $\Sigma(\d) \cup \cP_0\}$}
\end{multline*}
In other words, $\Xset(\d)$ is the fiber over $\d$ of the map $\Xset(K) \to \D$ that sends $\chi$ to 
the part of its conductor supported on $\cP_1 \cup \cP_2$, so we have $\Xset(K) = \coprod_{\d\in\D} \Xset(\d)$.
For $X > 0$ define
\begin{itemize}
\item
$\Xset(X) = \{\chi\in\Xset(K) : \text{$\chi$ is unramified outside of $\Sigma \cup \{\l : \N\l < X\}$}\}$
\item
$\Xset(\d,X) := \Xset(\d) \cap \Xset(X)$.
\end{itemize}
Let $\eta_\d : \Xset(\d) \to \Omega_\d$ be the natural map $\chi \to (\ldots,\chi_v,\ldots)_{v\in\Sigma(\d)}$, 
where $\chi_v \in \Xset(K_v)$ is the restriction of $\chi$ to $G_{K_v}$.
\end{defn}

The main result of this section is Theorem \ref{cft2}, which describes the image and fibers of the map 
$\eta_\d : \Xset(\d,X) \to \Omega_\d$.  For large $X$ this map is surjective if $p>2$ 
(its image depends on the parity of $w(\d)$ if $p=2$), and all nonempty fibers 
have the same cardinality.  Theorem \ref{cft2} will enable us to pass from averages over $\Omega_\d$ 
to averages over $\Xset(\d,X)$.

\begin{lem}
\label{biglem}
Let $G := \Gal(K(T)/K(\bmu_p))$.  
\begin{enumerate}
\item
There is a $\sigma \in G$ such that $\sigma^p \ne 1$. 
\item
If $p > 3$ then $G$ has no quotient of order $p$.
\item
If $p=3$ and $3 \mid |G|$, then $G = \SL_2(T)$. 
\end{enumerate}
\end{lem}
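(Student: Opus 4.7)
The plan is to use the symplectic pairing to embed $G$ into $\SL_2(\F_p)$ and then invoke Dickson's classification of subgroups of $\SL_2(\F_p)$. First I would observe that the isomorphism $\wedge^2 T \cong \bmu_p$ makes $G = \Gal(K(T)/K(\bmu_p))$ preserve the alternating pairing on the nose (since $G$ acts trivially on $\bmu_p$), so $G \subset \SL(T) \cong \SL_2(\F_p)$. Assumptions \eqref{bighyp0} and \eqref{bighyp1} together imply that $T$ is absolutely irreducible as an $\F_p[G]$-module, which is the main input I will use throughout.

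For (i), I would observe that every $p$-subgroup of $\SL_2(\F_p)$ has order $1$ or $p$ and sits inside some Sylow $p$-subgroup, each of which is conjugate to the group of upper unipotent matrices and has a common fixed line. If every $\sigma \in G$ satisfied $\sigma^p = 1$, then $G$ would be a $p$-group and $T$ would have a $G$-fixed line, contradicting absolute irreducibility. Hence some $\sigma \in G$ has $\sigma^p \ne 1$.

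For (ii) and (iii), I would analyze the image $\bar G$ of $G$ in $\PSL_2(\F_p)$ via Dickson's classification. Absolute irreducibility rules out Borel images and cyclic images, leaving $\bar G$ to be dihedral, one of $A_4, S_4, A_5$, or all of $\PSL_2(\F_p)$. For (ii), with $p \ge 5$: in the first four cases the order of $\bar G$ is coprime to $p$ (noting that for $p = 5$ the coincidence $A_5 \cong \PSL_2(\F_5)$ puts that subcase into the last case instead), so $|G| \le 2|\bar G|$ is coprime to $p$ and $G$ has no quotient of order $p$; and if $\bar G = \PSL_2(\F_p)$ then $G \cdot \{\pm I\} = \SL_2(\F_p)$, so $G$ is either $\SL_2(\F_p)$ or a lift isomorphic to $\PSL_2(\F_p)$, both of which are perfect for $p \ge 5$ (the latter being a nonabelian simple group) and hence admit no abelian quotient.

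For (iii), with $p = 3$, one has $\PSL_2(\F_3) = A_4$, and the subgroups of $A_4$ whose order is divisible by $3$ are $\Z/3\Z$ and $A_4$ itself. The cyclic image is excluded because any cyclic subgroup of $\SL_2(\F_3)$ is abelian and acts reducibly on $T \otimes \bar\F_3$. So $\bar G = A_4$, which gives $G \cdot \{\pm I\} = \SL_2(\F_3)$. The main obstacle, which I expect to require the most care, is excluding the case $-I \notin G$: here $G$ would be an index-$2$ subgroup of $\SL_2(\F_3)$. I would rule this out by computing that the derived subgroup of $\SL_2(\F_3)$ is the quaternion group $Q_8$, so the abelianization is $\Z/3\Z$, which admits no quotient of order $2$. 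Hence $-I \in G$ and $G = \SL_2(\F_3) = \SL_2(T)$.
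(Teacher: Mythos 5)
Your proof is correct, but it takes a genuinely different route from the paper's, so a comparison is worthwhile. The paper splits into three cases --- $p \nmid |G|$; $G = \SL_2(\F_p)$; or $p \mid |G|$ with $G \ne \SL_2(\F_p)$ --- and in the third case invokes Serre's Proposition 15 to put $G$ inside a Borel subgroup, then uses assumption \eqref{bighyp1} to write $G$ explicitly as a group of upper-triangular matrices with non-scalar diagonal part. Crucially, the paper never needs to know whether $T$ is irreducible as a $G$-module: (i) and (ii) are read off from the matrix description, and (iii) is vacuous because that case forces $p\ge 5$. You instead front-load the argument by establishing that $T$ is absolutely irreducible over $\F_p[G]$, and then run Dickson's classification of subgroups of $\PSL_2(\F_p)$. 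Your route is conceptually cleaner in that it shows the Borel case cannot arise at all; the paper's is more self-contained in that it avoids the absolute-irreducibility lemma.

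The one place where you are too quick is the opening assertion that \eqref{bighyp0} and \eqref{bighyp1} ``together imply'' that $T$ is absolutely irreducible as an $\F_p[G]$-module. This is true, but it is not a formal consequence of the two hypotheses taken separately. Assumption \eqref{bighyp0} concerns simplicity over $G_K$, not over the smaller group $G = G_{K(\bmu_p)}$; and \eqref{bighyp1} alone does not rule out a $G$-invariant line --- the group $\{\bigl(\begin{smallmatrix} a & b \\ 0 & a^{-1} \end{smallmatrix}\bigr) : a \in H,\, b \in \F_p\}$ with $H\not\subset\{\pm1\}$ fixes a line yet has scalar centralizer in $M_2(\F_p)$. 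To justify your claim you need the normality of $G$ in $\Gal(K(T)/K)$. If $T$ had two distinct $G$-invariant lines, $G$ would be diagonalizable and its centralizer at least two-dimensional, contradicting \eqref{bighyp1}. If instead $T$ had a unique $G$-invariant line $L$, then for every $g\in\Gal(K(T)/K)$ the line $gL$ is also $G$-invariant (since $g^{-1}Gg=G$), so $gL=L$ by uniqueness; hence $L$ is $\Gal(K(T)/K)$-stable, contradicting \eqref{bighyp0}. Once $T$ is $G$-irreducible, \eqref{bighyp1} together with Schur's lemma upgrades this to absolute irreducibility. You should spell this argument out: it is exactly the content that makes the paper's Case~3 vacuous and therefore lets your Dickson-based argument proceed without ever confronting a Borel image.
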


\begin{proof}
Fix an $\Fp$-basis of $T$, so that we can identify $\Gal(K(T)/K(\bmu_p))$ with a 
subgroup of $\SL_2(\Fp)$.

\medskip\noindent
{\em Case 1: $p \nmid |G|$.}  
Our assumption \eqref{bighyp0} implies that $G \ne 1$.  In this case any nontrivial   
$\sigma \in G$ satisfies (i), (ii) is trivial, and (iii) is vacuous.

\medskip\noindent
{\em Case 2: $G = \SL_2(\Fp)$.}  
All three assertions follow directly in this case. 

\medskip\noindent
{\em Case 3: $p \mid |G|$ and $G \ne \SL_2(\Fp)$.}  
In this case, \cite[Proposition 15]{serre1972} 
shows that $G$ is contained in a Borel subgroup of $\SL_2(\Fp)$.  It follows from our assumption
\eqref{bighyp1} that $G$ commutes only with scalar matrices in 
$M_{2 \times 2}(\Fp)$, and so there is a subgroup $H \subset \Fp^\times$, 
$H \not\subset \{\pm1\}$, such that with a suitable choice of basis
$$
G = \bigl\{\text{\scriptsize$\biggl(\begin{matrix}a & b \\ 0 & a^{-1}\end{matrix}\biggr)$} : a \in H, b \in \Fp\bigr\}.
$$
Now (i) and (ii) follow directly, and we must have $p > |H| \ge 3$ in this case.  
\end{proof}

\begin{lem}
\label{4.3}
Define the subgroup $\A \subset K^\times/(K^\times)^p$ by 
$$
\A := \ker(K^\times/(K^\times)^p \to K(T)^\times/(K(T)^\times)^p).
$$
\begin{enumerate}
\item
$\A$ is cyclic, generated by an element $\Delta \in \O_{K,\Sigma}^\times$.
\item
If $p = 2$, then $|\A| = 2$.
\item
If $p = 3$, then $|\A| = 1$ or $3$, and $\A = 1$ if $3 \nmid [K(T):K]$.
\item
If $p > 3$, then $\A = 1$.  
\end{enumerate}
\end{lem}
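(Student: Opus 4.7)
The plan is to rewrite $\A$ as a Galois-cohomology group via inflation--restriction, then compute case by case using Lemma~\ref{biglem}. By Hilbert~90 applied to the Kummer sequence, $H^1(F,\bmu_p) \cong F^\times/(F^\times)^p$ for every field $F$ of characteristic $\ne p$, so $\A$ is identified with the kernel of the restriction $H^1(K,\bmu_p) \to H^1(K(T),\bmu_p)$. Since $\bmu_p \subset K(T)$ by \eqref{weilpair}, inflation--restriction applied to $G_{K(T)} \lhd G_K$ yields $\A \cong H^1(\Gamma,\bmu_p)$, with $\Gamma := \Gal(K(T)/K)$. Applying inflation--restriction once more to $G := \Gal(K(T)/K(\bmu_p)) \lhd \Gamma$ and noting that $|\Gamma/G|$ divides $p-1$ and is hence coprime to $p$ (so $H^i(\Gamma/G,\bmu_p) = 0$ for $i \ge 1$), I obtain
$$
\A \;\cong\; \Hom(G,\bmu_p)^{\Gamma/G} \;=\; \Hom(G^{\ab},\bmu_p)^{\Gamma/G}.
$$

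Parts (ii)--(iv) then follow by plugging in Lemma~\ref{biglem}. For (iv) with $p > 3$: Lemma~\ref{biglem}(ii) says $G$ has no quotient of order $p$, so $\Hom(G,\bmu_p) = 0$ and $\A = 1$. For (ii) with $p = 2$: the hypotheses force $\Gamma = G \cong S_3$ (as $\bmu_2 \subset K$ trivially), so $G^{\ab} = \Z/2$ and $\A \cong \Hom(\Z/2,\bmu_2) = \Z/2$. For (iii) with $p = 3$: if $3 \nmid [K(T):K]$ then, using that $[K(\bmu_3):K] \mid 2$, also $3 \nmid |G|$, so $\Hom(G,\bmu_3) = 0$ and $\A = 1$; otherwise $3 \mid |G|$ and Lemma~\ref{biglem}(iii) gives $G = \SL_2(\F_3)$ with $G^{\ab} = \SL_2(\F_3)/Q_8 \cong \Z/3$. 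If $\bmu_3 \subset K$, then $\Gamma/G = 1$ and the invariants are all of $\Hom(\Z/3,\bmu_3) \cong \Z/3$. If $\bmu_3 \not\subset K$, the nontrivial element of $\Gamma/G$ lifts to any $s \in \GL_2(\F_3)$ with $\det s = -1$, and acts on $G^{\ab}$ by inversion (explicit matrix check: e.g.\ $\mathrm{diag}(1,-1)$ conjugates $\bigl(\begin{smallmatrix}1&1\\0&1\end{smallmatrix}\bigr)$ to its inverse) and on $\bmu_3$ by inversion via the cyclotomic character; these two inversions cancel in $\Hom(G^{\ab},\bmu_3)$, so the action is trivial and again $\A \cong \Z/3$. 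Thus $|\A| = 3$ in both subcases.

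For (i), cyclicity of $\A$ as an $\F_p$-vector space is immediate from $|\A| \le p$ above. For the $\O_{K,\Sigma}^\times$-representative, observe that $K(T)/K$ is unramified outside $\Sigma$, so $K(\alpha^{1/p})/K$ is too for any $[\alpha] \in \A$; at each $\l \notin \Sigma$ (where $\l \nmid p$), unramifiedness of $K_\l(\alpha^{1/p})/K_\l$ forces $v_\l(\alpha) \equiv 0 \pmod p$. Hence $(\alpha) = \mathfrak{a}^p$ as a fractional $\O_{K,\Sigma}$-ideal, and \eqref{h2a} gives $\mathfrak{a} = (\beta)$ principal, so $\Delta := \alpha\beta^{-p} \in \O_{K,\Sigma}^\times$ has the same class as $\alpha$. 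The main technical subtlety is the $p = 3$, $\bmu_3 \not\subset K$ subcase, resolved by the observation that inversions on source and target of $\Hom$ cancel.
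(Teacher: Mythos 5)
Your proof is correct, and it is more self-contained than the paper's. Where the paper delegates both the identity
$$
\A \;=\; \Hom\bigl(\Gal(K(T)/K(\bmu_p)),\bmu_p\bigr)^{\Gal(K(\bmu_p)/K)}
$$
and part (i) to [Lemma~6.2]{kmr} and then says (ii), (iii) ``follow directly,'' you derive the displayed identity from scratch (Kummer theory plus a double application of inflation--restriction, using that $|\Gal(K(\bmu_p)/K)|$ is prime to $p$ to kill the higher cohomology of the quotient) and you actually supply the case analysis. The nontrivial content your write-up makes explicit is the $p=3$, $\bmu_3 \not\subset K$ subcase: the outer involution acts by inversion on both $G^{\ab}\cong\Z/3$ (conjugation in $\GL_2(\F_3)$) and on $\bmu_3$ (cyclotomic character $=\det$), and these two inversions cancel in $\Hom(G^{\ab},\bmu_3)$, so the invariants are everything. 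That step is indeed the one hidden behind the paper's ``follow directly.'' Your argument for part (i) is also the right one and matches what is used in the cited lemma in spirit: $\alpha\in\A$ becomes a $p$-th power in $K(T)$, which is unramified outside $\Sigma$, forcing $v_\l(\alpha)\equiv 0\pmod p$ for $\l\notin\Sigma$, and then \eqref{h2a} lets you adjust $\alpha$ by a $p$-th power to land in $\O_{K,\Sigma}^\times$; cyclicity follows from the case analysis giving $|\A|\le p$. The only stylistic caveat is that the phrase ``unramifiedness of $K_\l(\alpha^{1/p})/K_\l$ forces $v_\l(\alpha)\equiv 0\pmod p$'' tacitly uses that $\bmu_p\subset K(T)$ so that $K(\alpha^{1/p})\subset K(T)$ regardless of the chosen root; you have that, but it is worth a word. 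In sum: correct, with a genuinely more explicit route than the paper's citation-based proof.
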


\begin{proof}
Assertion (i) is \cite[Lemma 6.2]{kmr}, which also showed that 
\begin{equation}
\label{A}
\A = \Hom(\Gal(K(T)/K(\bmu_p)),\bmu_p)^{\Gal(K(\bmu_p)/K)}.
\end{equation}  
Assumption \eqref{bighyp1} implies that if $p=2$, then $\Gal(K(T)/K) \cong S_3$.  
Now (ii) and (iii) follow directly from \eqref{A}.  

If $p > 3$, then (iv) follows from \eqref{A} and Lemma \ref{biglem}(ii).
\end{proof}

Fix once and for all a $\Delta \in \O_{K,\Sigma}^\times$ as in Lemma \ref{4.3}.  
Recall (Definition \ref{ddef}) that $\Omega_1 := \prod_{v \in \Sigma} \Xset(K_v)$, 
and more generally $\Omega_\d^S := S \;\times    \prod_{\l \mid \d}\Xset_\ram(K_\l)$
for $\d\in\D$ and $S \subset \Omega_1$.
For each $v$, local class field theory identifies $\Xset(K_v)$ 
with $\Hom(K_v^\times,\bmu_p)$.

\begin{lem}
\label{elem}
Suppose $G$ and $H$ are abelian groups, and $J \subset G \times H$ is a subgroup.  
Let $\pi_G$ and $\pi_H$ denote the projection maps from $G \times H$ to $G$ and $H$, respectively.
Let $J_0 := \ker(J \map{\pi_G} G/G^p)$.
\begin{enumerate}
\item
The image of the natural map $\Hom((G \times H)/J,\bmu_p) \to \Hom(H,\bmu_p)$ is 
$\Hom(H/\pi_H(J_0),\bmu_p)$.
\item

If $J/J^p \to G/G^p$ is injective, 
then $\Hom((G \times H)/J,\bmu_p) \to \Hom(H,\bmu_p)$ 
is surjective.
\end{enumerate}
\end{lem}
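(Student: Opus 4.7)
The plan is to translate $\Hom((G \times H)/J, \bmu_p)$ into the language of pairs of homomorphisms with a compatibility condition on $J$, and then exploit the fact that $\bmu_p$ is injective in the category of $\Fp$-modules.

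First I would use the canonical identification $\Hom(G \times H, \bmu_p) \cong \Hom(G, \bmu_p) \times \Hom(H, \bmu_p)$ to identify $\Hom((G \times H)/J, \bmu_p)$ with the subgroup of pairs $(\psi, \chi)$ satisfying $\psi(g)\chi(h) = 1$ for all $(g,h) \in J$. The image of the projection to $\Hom(H, \bmu_p)$ thus consists of those $\chi \in \Hom(H, \bmu_p)$ for which the homomorphism $J \to \bmu_p$ given by $(g,h) \mapsto \chi(h)^{-1}$ can be written as the composition of $\pi_G : J \to G$ with some $\psi \in \Hom(G, \bmu_p)$. Since $\bmu_p$ has exponent $p$, every homomorphism $G \to \bmu_p$ factors through $G/G^p$; accordingly the question reduces to whether $(g,h) \mapsto \chi(h)^{-1}$ factors through the composition $J \xrightarrow{\pi_G} G \twoheadrightarrow G/G^p$, whose kernel is exactly $J_0$. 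The necessary condition is therefore $\chi|_{\pi_H(J_0)} = 1$. For sufficiency, the induced map $J/J_0 \to \bmu_p$ must be extended along the natural injection $J/J_0 \hookrightarrow G/G^p$; since both $J/J_0$ and $G/G^p$ are $\Fp$-vector spaces and $\bmu_p$ is an injective $\Fp$-module, an extension exists, producing the required $\psi$. This proves (i).

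For (ii), I would observe that $J^p \subseteq J_0$ always, since $J^p$ maps to zero in $G/G^p$. This gives a canonical factorization $J/J^p \twoheadrightarrow J/J_0 \hookrightarrow G/G^p$, and the hypothesis that the composite $J/J^p \to G/G^p$ is injective forces $J_0 = J^p$, whence $\pi_H(J_0) \subseteq H^p$. Since every $\chi \in \Hom(H, \bmu_p)$ annihilates $H^p$, the condition in (i) is automatic, so the image of the projection is all of $\Hom(H, \bmu_p)$. The only real substance is the extension step in (i), which rests on the $\Fp$-injectivity of $\bmu_p$ (equivalently, that $\Fp$ is a field); everything else is formal diagram chasing.
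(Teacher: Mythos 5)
Your proof is correct and takes essentially the same approach as the paper's, just presented from the dual perspective: the paper writes down the exact sequence of $\Fp$-vector spaces $0 \to \pi_H(J_0)H^p/H^p \to H/H^p \to (G\times H)/J(G\times H)^p$ and applies $\Hom(\,\cdot\,,\bmu_p)$, invoking $\Fp$-injectivity of $\bmu_p$ for right-exactness, while you unwind what that dualization means in terms of pairs $(\psi,\chi)$ and use the same injectivity directly to extend along $J/J_0 \hookrightarrow G/G^p$. Both arguments hinge on precisely the same two facts---that $J_0$ is the kernel of $J \to G/G^p$, and that $\bmu_p$ is injective as an $\Fp$-module---so this is the same proof in different packaging.
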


\begin{proof}
We have an exact sequence of $\Fp$-vector spaces 
$$
0 \too \pi_H(J_0) H^p/H^p \too H/H^p \too (G \times H)/J(G \times H)^p.
$$
Assertion (i) follows by applying $\Hom(\;\cdot\;,\bmu_p)$, and (ii) follows directly from (i).
\end{proof}

\begin{lem}
\label{cftlem}
Suppose that $\Ceb$ is a function satisfying Theorem \ref{cebdata}, $\d\in\D$,  
$\alpha \in \O^\times_{K,\Sigma(\d)}/(\O^\times_{K,\Sigma(\d)})^p$, and $\alpha \ne 1$.  
If $p > 2$, or if $p = 2$ and $\alpha \ne \Delta$, then there is a $\l \in \cP_0$ 
with $\N\l \le \Ceb(\N\d)$ 
such that $\alpha \notin (\O_\l^\times)^p$.
\end{lem}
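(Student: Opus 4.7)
The plan is to apply the effective Chebotarev bound (Theorem~\ref{cebdata}) to the field $N := K(T, \alpha^{1/p})$, which is Galois over $K$, abelian of exponent dividing $p$ over $K(T)$, and unramified outside $\Sigma(\d)$ since $\alpha$ is a $\Sigma(\d)$-unit. The first step is to rewrite the two desired conclusions as Frobenius conditions: for $\l \notin \Sigma(\d)$ (hence unramified in $N$), Lemma~\ref{4.2}(iii) says $\l \in \cP_0$ is equivalent to $\Frob_\l^p|_{K(T)} \neq 1$; and $\alpha \notin (\O_\l^\times)^p$ forces $\bmu_p \subset K_\l$ (i.e.\ $\Frob_\l|_{K(\bmu_p)} = 1$), since otherwise Hensel's lemma makes every unit a $p$-th power. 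Given this last condition, $\alpha \notin (\O_\l^\times)^p$ amounts to $\chi_\alpha(\Frob_\l) \neq 1$, where $\chi_\alpha : \Gal(N/K(\bmu_p)) \to \bmu_p$ is the Kummer character $\sigma \mapsto \sigma(\alpha^{1/p})/\alpha^{1/p}$.

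Before searching for Frobenius classes I would verify that $\chi_\alpha$ is nontrivial. Any $p$-th root in $K^\times$ of a $\Sigma(\d)$-unit is itself a $\Sigma(\d)$-unit, so $\O^\times_{K,\Sigma(\d)}/(\O^\times_{K,\Sigma(\d)})^p \hookto K^\times/(K^\times)^p$ is injective and $\alpha \notin (K^\times)^p$. Since $[K(\bmu_p):K]$ divides $p-1$ and is coprime to $p$, this persists to $K(\bmu_p)^\times/(K(\bmu_p)^\times)^p$, making $\chi_\alpha \neq 1$.

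Next I would produce $\tilde\sigma \in \Gal(N/K(\bmu_p))$ with $(\tilde\sigma|_{K(T)})^p \neq 1$ and $\chi_\alpha(\tilde\sigma) \neq 1$. Write $G := \Gal(K(T)/K(\bmu_p))$. If $\alpha \notin \A$, then $N/K(T)$ is a cyclic Kummer extension of degree $p$ on which $\chi_\alpha$ is nontrivial: picking any $\sigma \in G$ with $\sigma^p \neq 1$ (Lemma~\ref{biglem}(i)) and lifting to $\Gal(N/K(\bmu_p))$, the values of $\chi_\alpha$ on the $p$ lifts form a full coset of $\bmu_p$, so all but one lift work. If instead $\alpha \in \A$, then $N = K(T)$ and $\chi_\alpha$ descends to a nontrivial character of $G$; the combined hypotheses with Lemma~\ref{4.3} then force $p = 3$ and $|\A| = 3$ (since $\A = \{1,\Delta\}$ rules out $p=2$ and $\A = 1$ rules out $p > 3$). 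By Lemma~\ref{biglem}(iii) we then have $G \cong \SL_2(\F_3)$, whose unique index-$3$ subgroup is $[G,G] = Q_8 = \ker \chi_\alpha$, and the eight elements of $\SL_2(\F_3)$ of order $6$ lie outside $Q_8$ with cube $\neq 1$.

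The set $S' \subset \Gal(N/K)$ of such $\tilde\sigma$ is conjugation-stable: the conditions on the restrictions to $K(T)$ and $K(\bmu_p)$ are obviously preserved, and a short computation shows that conjugation sends $\chi_\alpha(\tilde\sigma)$ to $\chi_\alpha(\tilde\sigma)^c$ with $c \in \F_p^\times$ the cyclotomic character value, preserving nonvanishing. Then Theorem~\ref{cebdata} applied to $F = N$ and $S = S'$, with $Y$ chosen marginally above $\N\d$ (the strict-inequality gap between $\N\d < Y$ and the target bound $\N\l \le \Ceb(\N\d)$ absorbed into $\Ceb$), produces a prime $\l \notin \Sigma(\d)$ with $\N\l \le \Ceb(\N\d)$ and $\Frob_\l \in S'$, finishing the argument. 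The principal obstacle is the $\alpha \in \A$ case, whose resolution rests critically on the very rigid group-theoretic structure of $\SL_2(\F_3)$ supplied by Lemma~\ref{biglem}(iii); any weakening of the assumptions \eqref{bighyp0}--\eqref{bighyp2} would need a substitute for this input.
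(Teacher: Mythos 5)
Your proof is correct and follows essentially the same route as the paper: apply the effective Chebotarev bound to $K(T,\alpha^{1/p})$, split into the cases $\alpha\notin\A$ (lift an element of $\Gal(K(T)/K(\bmu_p))$ whose $p$-th power is nontrivial, via Lemma~\ref{biglem}(i)) and $\alpha\in\A$ (forcing $p=3$, $\Gal(K(T)/K(\bmu_3))\cong\SL_2(\F_3)$, and using an element of order $6$), and in each case produce a Frobenius condition that lands the prime in $\cP_0$ while witnessing $\alpha\notin(\O_\l^\times)^p$. The paper's proof simply takes $S$ to be the conjugacy class of a single chosen element $\tau$ (so conjugation-stability is automatic), whereas you work with the full set of admissible automorphisms and argue its conjugation-stability directly via the cyclotomic twist of $\chi_\alpha$; this is a cosmetic difference. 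Your identification $\ker\chi_\alpha=[G,G]=Q_8$ in the $\SL_2(\F_3)$ case makes explicit what the paper leaves implicit in ``$\sigma$ acts nontrivially on $\alpha^{1/3}$.''
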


\begin{proof}
Suppose first that $\alpha \notin \A$.  Then by definition $\alpha \notin (K(T)^\times)^p$, so 
$$
K(\bmu_p,\alpha^{1/p}) \cap K(T) = K(\bmu_p).
$$
By Lemma \ref{biglem}(i), there is a $\sigma \in \Gal(K(T)/K(\bmu_p))$ such that 
$\sigma^p \ne 1$.  Choose an element 
$\tau \in \Gal(K(T,\alpha^{1/p})/K(\bmu_p))$ such that $\tau|_{K(T)} = \sigma$ 
and $\tau|_{K(\bmu_p,\alpha^{1/p})} \ne 1$.  
By Theorem \ref{cebdata} applied with $F = K(T,\alpha^{1/p})$ 
and $S$ equal to the conjugacy class of $\tau$, 
we see that there is a prime $\l\notin\Sigma(\d)$ with $\N\l \le \Ceb(\N\d)$ whose 
Frobenius in $\Gal(K(T,\alpha^{1/p})/K)$ is in the conjugacy class of $\tau$.   
For such a prime $\l$, we have that $\l \in \cP_0$ by Lemma \ref{4.2}(iii) 
and $\alpha \notin (\O_\l^\times)^p$.

By Lemma \ref{4.3}, it remains only to consider the case $p = 3$, $3\mid[K(T):K]$, 
and $1 \ne \alpha \in \A$.  Then $K(\bmu_3,\alpha^{1/3}) \subset K(T)$, and 
$\Gal(K(T)/K(\bmu_3)) \cong \SL_2(\F_3)$ by Lemma \ref{biglem}(iii), so we can 
choose an element $\sigma \in \Gal(K(T)/K(\bmu_3))$ of order $6$.  
Applying Theorem \ref{cebdata} with $F = K(T)$ and 
$S$ equal to the conjugacy class of $\sigma$, 
we see that there is a prime $\l\notin\Sigma$ with $\N\l \le \Ceb(\N\d)$ whose 
Frobenius in $\Gal(K(T)/K)$ is in the conjugacy class of $\sigma$. 
For such a prime $\l$, we have that $\l \in \cP_0$ by Lemma \ref{4.2}(iii), 
and $\sigma$ acts nontrivially on $\alpha^{1/3} \in K(T)$, so $\alpha \notin (\O_\l^\times)^3$.
This completes the proof.
\end{proof}

\begin{defn}
\label{gpmdef2}
Define $\sgnd : \Omega_1 \to \bmu_p$ by 
$
\sgnd(\ldots,\omega_v,\ldots) := \prod_{v \in \Sigma} \omega_v(\Delta).
$
If $p=2$ define
$$
S^+ := \{\omega\in\Omega_1 : \sgnd(\omega) = 1\}, \quad S^- := \{\omega\in\Omega_1 : \sgnd(\omega) = -1\}.
$$
We will abbreviate $\Omega^{+}_\d = \Omega^{S^+}_\d$ and $\Omega^{-}_\d = \Omega^{S^-}_\d$ 
\end{defn}

Recall that $\eta_\d : \Xset(\d) \to \Omega_\d$ is the natural restriction map.

\begin{prop}
\label{cft2}
Suppose that $\d\in\D$, $\Ceb$ is a function satisfying 
Theorem \ref{cebdata}, and $X > \Ceb(\N\d)$.  
\begin{enumerate}
\item
If $p > 2$ then $\eta_\d : \Xset(\d,X) \to \Omega_\d$ is surjective.
\item
If $p = 2$ then 
$
\eta_\d(\Xset(\d,X)) = 
   \begin{cases}
   \Omega_\d^+ & \text{if $w(\d)$ is even}\\
   \Omega_\d^- & \text{if $w(\d)$ is odd}. 
   \end{cases}
$
\item
For every $\omega\in\eta_\d(\Xset(\d,X))$ we have
$$
\frac{|\{\chi\in\Xset(\d,X) : \eta_\d(\chi)=\omega\}|}{|\Xset(\d,X)|} 
   = \begin{cases}1/{|\Omega_\d|}&\text{if $p>2$},\\ 2/{|\Omega_\d|}&\text{if $p=2$}.\end{cases}
$$
\end{enumerate}
\end{prop}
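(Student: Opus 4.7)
The plan is to realize $\eta_\d$ as the restriction of a group homomorphism, and then compute its image and kernel via class field theory together with Lemma \ref{cftlem}. Set $T_{\d,X} := \Sigma(\d) \cup \{\l \in \cP_0 : \N\l < X\}$, and let $\Xset^+(X) \subset \Xset(K)$ denote the subgroup of characters unramified outside $T_{\d,X}$. The natural restriction $\tilde\eta_\d : \Xset^+(X) \to \prod_{v \in \Sigma(\d)} \Xset(K_v)$ is a group homomorphism whose preimage of $\Omega_\d$ is precisely $\Xset(\d,X)$, so all nonempty fibers of $\eta_\d$ are cosets of $\ker\tilde\eta_\d$ and share a common cardinality. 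Thus (iii) will follow from $|\text{fiber}| = |\Xset(\d,X)|/|\image(\eta_\d)|$ once the image is determined.

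My first step is to identify $\Xset^+(X) \cong \Hom(A,\bmu_p)$ with $A := \iK/(K^\times \cdot \iK^p \cdot U^{T_{\d,X}})$ and $U^{T_{\d,X}} := \prod_{v\notin T_{\d,X}} \O_v^\times$. The map $\tilde\eta_\d$ is then Pontryagin-dual to
$$
\phi : \dirsum{v \in \Sigma(\d)} K_v^\times/(K_v^\times)^p \too A,
$$
so $\image(\tilde\eta_\d)$ equals the annihilator of $\ker\phi$. Unwinding $\phi$ and using hypothesis \eqref{h2a} to conclude $\Pic(\O_{K,\Sigma(\d)}) = 0$, I plan to show that $\ker\phi$ is the diagonal image of
$$
\A_X := \{\alpha \in \O^\times_{K,\Sigma(\d)}/(\O^\times_{K,\Sigma(\d)})^p : \alpha \in (K_\l^\times)^p \text{ for all } \l \in \cP_0 \text{ with } \N\l < X\}.
$$
The key substep is to write any $\beta \in K^\times$ representing a relation in $\ker\phi$ as $\beta = \alpha u^p$ with $\alpha \in \O^\times_{K,\Sigma(\d)}$ and $u \in K^\times$, via the idelic valuation sequence and the vanishing of $\Pic(\O_{K,\Sigma(\d)})$.

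Since $X > \Ceb(\N\d)$, Lemma \ref{cftlem} forces $\A_X = \{1\}$ when $p > 2$ and $\A_X \subseteq \{1,\Delta\}$ when $p = 2$; in the latter case $\Delta$ actually lies in $\A_X$, because $\Frob_\l$ has order $3$ for every $\l \in \cP_0$, hence lies in $\Gal(K(T)/K(\sqrt\Delta))$ and fixes $\sqrt\Delta$, so $\Delta \in (\O_\l^\times)^2$. Hypothesis \eqref{h2b} ensures the diagonal map $\A_X \to \dirsum{v \in \Sigma(\d)} K_v^\times/(K_v^\times)^p$ remains injective on the potentially nontrivial class $\Delta$. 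Consequently $\image(\tilde\eta_\d)$ is all of $\prod_{v \in \Sigma(\d)}\Xset(K_v)$ for $p > 2$, proving (i); and for $p = 2$ it is the index-$2$ subgroup cut out by $\prod_{v \in \Sigma(\d)}\omega_v(\Delta) = 1$.

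For (ii) I then translate this constraint to a $\Sigma$-only condition. At each $\l \mid \d$ the character $\omega_\l$ is ramified of order $2$; since $\Delta \in \O_\l^\times$ (as $\l \notin \Sigma$), $\omega_\l(\Delta)$ is $1$ or $-1$ according as $\Delta$ is a square or a nonsquare in $K_\l$. Because $K(\sqrt\Delta)$ is the unique quadratic subextension of $K(T)/K \cong S_3$, namely the fixed field of $A_3$, Lemma \ref{4.2} yields $\omega_\l(\Delta) = 1$ for $\l \in \cP_2$ (where $\Frob_\l = 1 \in A_3$) and $\omega_\l(\Delta) = -1$ for $\l \in \cP_1$ (where $\Frob_\l$ is a transposition, hence outside $A_3$). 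Therefore $\prod_{\l\mid\d}\omega_\l(\Delta) = (-1)^{w(\d)}$, and the constraint from the previous paragraph becomes $\sgnd(\omega) = (-1)^{w(\d)}$, proving (ii); (iii) then follows from $|\image(\eta_\d)| = |\Omega_\d|$ for $p > 2$ and $|\Omega_\d|/2$ for $p = 2$. I expect the main obstacle to be the idelic bookkeeping in the middle steps: correctly identifying $\ker\phi$ with the diagonal image of $\A_X$ requires careful combination of the valuation sequence, $\Pic(\O_{K,\Sigma(\d)}) = 0$, and \eqref{h2b}.
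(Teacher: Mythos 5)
Your proof is correct and follows the same class-field-theoretic strategy as the paper, but with two genuine variations worth noting. First, where the paper encapsulates the CFT computation in an abstract Lemma \ref{elem} about images of $\Hom((G\times H)/J,\bmu_p)\to\Hom(H,\bmu_p)$, you dualize directly: you realize $\eta_\d$ as the restriction of a homomorphism $\tilde\eta_\d$ whose Pontryagin dual is a map $\phi$ into the idele class group mod $p$-th powers, and identify $\ker\phi$ with the diagonal image of $\A_X$. These are two sides of the same coin, but your version makes the role of $\Pic(\O_{K,\Sigma(\d)})=0$ (used to descend an idelic relation to an $\Sigma(\d)$-unit) and of Lemma \ref{cftlem} (used to bound $\A_X$) more transparent, and it streamlines the proof of (iii). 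Second, the paper obtains the crucial facts that $\Delta\in(\O_\l^\times)^2$ for $\l\in\cP_2$ and that $\Delta$ generates $\O_\l^\times/(\O_\l^\times)^2$ for $\l\in\cP_1$ by citing \cite[Lemma 6.5]{kmr}; you instead rederive them from first principles, observing that $K(\sqrt\Delta)$ is the fixed field of $A_3\subset\Gal(K(T)/K)\cong S_3$ and then reading off the splitting behavior of $\l$ at each width via Lemma \ref{4.2}. You extend the same reasoning to show $\Delta\in(K_\l^\times)^2$ for all $\l\in\cP_0$ (so $\Delta\in\A_X$), which the paper handles inside the combined Lemmas \ref{cftlem} and \ref{elem}(i). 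Both routes are valid; yours is more self-contained, the paper's is more modular.

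One small point to be careful about, which you glossed over: $\A_X$ as you define it lives in $\O_{K,\Sigma(\d)}^\times/(\O_{K,\Sigma(\d)}^\times)^p$, while hypothesis \eqref{h2b} is stated for $\O_{K,\Sigma}^\times$. For the application this is harmless because the only nontrivial element of $\A_X$ is $\Delta$, which comes from $\O_{K,\Sigma}^\times$, and injectivity of the diagonal map to $\prod_{v\in\Sigma}K_v^\times/(K_v^\times)^p$ (guaranteed by \eqref{h2b} together with Lemma \ref{4.3}(ii)) a fortiori gives injectivity into $\prod_{v\in\Sigma(\d)}K_v^\times/(K_v^\times)^p$; but it would be worth one sentence to make this explicit.
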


\begin{proof} 
By our assumption \eqref{h2a}, we have $\Pic(\O_{K,\Sigma(\d)}) = 0$.
Thus global class field theory gives 
$$
\Xset(K) = \Hom(\iK/K^\times,\bmu_p) = \textstyle\Hom((\prod_{v \in \Sigma(\d)}K_v^\times \times 
      \prod_{\l\notin\Sigma(\d)}\O_\l^\times)/\O_{K,\Sigma(\d)}^\times,\bmu_p).
$$
Let 
\begin{align*}
Q_1 &:= \{\l : \l \in \cP_0, \N\l \le X\}, \\
Q_2 &:= \{\l : \l\in\cP_1\cup\cP_2,\l\nmid\d\} \cup \{\l : \l \in \cP_0, \N\l > X\}.
\end{align*}
We apply Lemma \ref{elem} with 
$$
\textstyle
G := \prod_{\l\in Q_1}\O_\l^\times, \quad
H := \prod_{v \in \Sigma(\d)}K_v^\times \times 
      \prod_{\l\in Q_2}\O_\l^\times, \quad
J := \O_{K,\Sigma(\d)}^\times.
$$
Note that for $\chi \in \Xset(K)$, we have 
$$
\chi \in \Xset(\d,X) \iff 
\text{$\chi_\l(\O_\l^\times) = 1$ for $\l \in Q_2$ and $\chi_\l(\O_\l^\times) \ne 1$ if $\l \mid\d$.}
$$

If $p > 2$, then combining \eqref{h2b}, Lemma \ref{cftlem}, and Lemma \ref{elem}(ii) 
we see that the restriction map
\begin{equation}
\label{surj?}
\textstyle
\Xset(K) \too \Hom(\prod_{v \in \Sigma(\d)}K_v^\times \times 
      \prod_{\l\in Q_2}\O_\l^\times,\bmu_p)
\end{equation}
is surjective.  Thus for every $\omega\in\Omega_\d$ we can find a $\chi\in\Xset(K)$, 
unramified outside of $\Sigma$, $\d$, and $Q_1$, that restricts to $\omega$.  Such a $\chi$ 
necessarily belongs to $\Xset(\d,X)$, and this shows that 
$\eta_\d : \Xset(\d,X) \to \Omega_\d$ is surjective, proving (i).

Similarly, if $p = 2$ then $\Delta \ne 1$ by Lemma \ref{4.3}(ii).
Lemma \ref{cftlem} shows that 
$\ker(J/J^2 \to G/G^2)$ 
is generated by $\Delta$, so by Lemma \ref{elem}(i) the image of \eqref{surj?} is exactly 
$\Hom((\prod_{v \in \Sigma(\d)}K_v^\times \times 
   \prod_{\l\in Q_2}\O_\l^\times)/\ld\Delta\rd,\{\pm1\})$.
By \cite[Lemma 6.5]{kmr}, $\Delta \in (\O_\l^\times)^2$ if $\l \in \cP_2$, and 
$\Delta$ generates $\O_\l^\times/(\O_\l^\times)^2$ if $\l \in \cP_1$.
It follows that for $\omega\in\Omega_\d$, we have $\omega \in \eta_\d(\Xset(\d,X))$ if and only if 
$\sgnd(\omega) = (-1)^{w(\d)}$.  This proves (ii).

If $\chi_1,\chi_2 \in \Xset(\d,X)$, then $\eta_\d(\chi_1) = \eta_\d(\chi_2)$ if and only if 
$\chi_1\chi_2^{-1} \in \Xset(1,X) \cap \ker(\eta_1)$.
Since $\Xset(\d,X)$ is stable under multiplication by the group $\Xset(1,X)$, it 
follows that all nonempty fibers of $\eta_\d : \Xset(\d,X) \to \Omega_\d$ have the same order 
$|\Xset(1,X) \cap \ker(\eta_1)|$.  This proves (iii).
\end{proof}

\section{Rank densities}
\label{rankdensities}

In this section we use Theorems \ref{ybox} and \ref{gov}, and the results of \S\ref{lgc}
to prove Theorem \ref{thmb} of the Introduction (Corollary \ref{11.11} below).  
We will deduce this from a finer result (Theorem \ref{6.5}).

Fix for this section a function $\Ceb$ satisfying Theorem \ref{cebdata}.  
By Theorem \ref{gov}, $\Ceb$ is a convergence rate function for $(\Omega,\ML)$.  
We continue to assume that \eqref{bighyp0}, \eqref{bighyp1}, 
and \eqref{bighyp2} hold.  
Recall that if $\omega\in\Omega_\d$ then $\rk(\omega) := \dim_{\Fp}\Sel(T,\omega)$.   
If $\chi \in \Xset(K)$ then $\chi \in \Xset(\d)$ for a (unique) $\d\in\D$, and we define 
$$
\Sel(T,\chi) = \Sel(T,\eta_\d(\chi))
$$
where $\eta_d : \Xset(\d) \to \Omega_\d$ is the product of restriction maps (Definition \ref{defeta}).
If $A$ is an elliptic curve over $K$ and $T = A[2]$ with the natural twisting data as in 
\S\ref{eex}, then Proposition \ref{6.4} shows that $\Sel(T,\chi) = \Sel_2(A^\chi)$, 
the classical $2$-Selmer group of the quadratic twist $A^\chi$ of $A$.

Define $\rk(\chi) := \dim_{\Fp}\Sel(T,\chi)$.

\begin{defn}
Suppose $\d\in\D$.
If $p = 2$, let $\Omega^+_\d$ and $\Omega^-_\d$ be the sets given by Definition \ref{gpmdef2}.  
To simplify the notation, define $\Omega^+_\d := \Omega^-_\d := \Omega_\d$ if $p > 2$.
Let $E^\pm_\d \in W$ be the probability distribution corresponding to $\Omega^\pm_\d$ as 
in Definition \ref{rsdef}.
\end{defn}

\begin{prop}
\label{11.2}
If $X > \Ceb(\N\d)$, then 
$$
\frac{|\{\chi\in\Xset(\d,X) : \rk(\chi) = n\}|}{|\Xset(\d,X)|} = 
\begin{cases}
E_\d^+(n) & \text{if $w(\d)$ is even} \\
E_\d^-(n) & \text{if $w(\d)$ is odd}.
\end{cases}
$$
\end{prop}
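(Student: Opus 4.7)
The plan is to reduce this counting statement to a direct consequence of Proposition \ref{cft2}, observing that the rank of a global character depends only on its image under the localization map $\eta_\d$. Since $\Sel(T,\chi) := \Sel(T,\eta_\d(\chi))$ by definition, the rank $\rk(\chi)$ is constant on fibers of $\eta_\d : \Xset(\d,X) \to \Omega_\d$. This lets me reorganize the numerator and denominator as a sum over $\omega$ in the image of $\eta_\d$.

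Next, I will invoke Proposition \ref{cft2} to determine that image and the common fiber size. Let $\Omega_\d^\e$ denote $\eta_\d(\Xset(\d,X))$. Parts (i) and (ii) tell me $\Omega_\d^\e = \Omega_\d$ if $p>2$; if $p=2$, then $\Omega_\d^\e = \Omega_\d^+$ when $w(\d)$ is even and $\Omega_\d^\e = \Omega_\d^-$ when $w(\d)$ is odd (note the sign matches the convention in Proposition \ref{11.2}). Part (iii) says every nonempty fiber has the same cardinality, call it $c$, and in particular $|\Xset(\d,X)| = c \cdot |\Omega_\d^\e|$.

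Putting these together, the numerator decomposes as
$$
|\{\chi \in \Xset(\d,X) : \rk(\chi) = n\}| = \sum_{\omega \in \Omega_\d^\e,\; \rk(\omega) = n} |\eta_\d^{-1}(\omega)| = c \cdot |\{\omega \in \Omega_\d^\e : \rk(\omega) = n\}|,
$$
so the ratio equals $|\{\omega \in \Omega_\d^\e : \rk(\omega) = n\}| / |\Omega_\d^\e|$. By the definition of the rank distribution function (Definition \ref{rsdef}) applied to the rank data $\Omega^{S^\pm}$, this ratio is exactly $E_\d^+(n)$ or $E_\d^-(n)$ according as $\Omega_\d^\e = \Omega_\d^+$ or $\Omega_\d^-$, which is the stated formula.

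There is essentially no obstacle here: the statement is a bookkeeping translation of Proposition \ref{cft2} into the probabilistic language of $E_\d^\pm$. The only point requiring any care is matching the sign conventions in the $p=2$ case, which is handled by checking that the definition of $\Omega_\d^\pm = \Omega_\d^{S^\pm}$ in Definition \ref{gpmdef2} agrees with the image described in Proposition \ref{cft2}(ii).
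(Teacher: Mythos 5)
Your proof is correct and follows the same route as the paper: use Proposition \ref{cft2} to identify the image $\eta_\d(\Xset(\d,X))$ as $\Omega_\d^\pm$ (or all of $\Omega_\d$ when $p>2$) with equal-sized fibers, observe $\rk$ factors through $\eta_\d$, and then the ratio collapses to the density $E_\d^\pm(n)$. This matches the paper's argument in both structure and the key inputs used.
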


\begin{proof}
Let $\nu := (-1)^{w(\d)}$.  
Fix $X > \Ceb(\N\d)$.  By Proposition \ref{cft2}, the natural map
$
\eta_\d : \Xset(\d,X) \to \Omega_\d^\nu
$
is surjective, and all fibers have the same order.  
By definition, if $\chi \in \Xset(\d)$ then $\Sel(T,\chi) = \Sel(T,\eta_\d(\chi))$.
Therefore
$$
\frac{|\{\chi\in\Xset(\d,X) : \rk(\chi) = n\}|}{|\Xset(\d,X)|} 
   = \frac{|\{\omega\in\Omega_\d^\nu : \rk(\omega) = n\}|}{|\Omega_\d^\nu|}
   = E_\d^\nu(n).
$$
\end{proof}

\begin{lem}
\label{cft3}
Suppose $\d\in\D$.  If $m$ is the number of primes dividing $\d$, then 
for every $X > \Ceb(\N\d)$ we have $|\Xset(\d,X)| = (p-1)^m|\Xset(1,X)|$.
\end{lem}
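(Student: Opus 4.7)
The plan is to realize $\Xset(\d,X)$ as the preimage of an explicit subset under a group homomorphism whose kernel is $\Xset(1,X)$. First I would introduce the ambient group
$$
Y(\d,X) := \{\chi \in \Xset(K) : \chi \text{ is unramified outside } \Sigma(\d) \cup (\cP_0 \cap \{\l : \N\l < X\})\},
$$
which contains $\Xset(1,X)$ as a subgroup and $\Xset(\d,X)$ as a subset. For each $\l \in \cP_1 \cup \cP_2$, restriction to $\O_\l^\times$ gives a surjection $\Xset(K_\l) \onto \Hom(\O_\l^\times,\bmu_p) \cong \bmu_p$ whose kernel is the subgroup of unramified local characters. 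Letting $m$ be the number of primes dividing $\d$, I would define
$$
\phi_\d : Y(\d,X) \too \prod_{\l \mid \d} \Hom(\O_\l^\times,\bmu_p) \;\cong\; \bmu_p^m
$$
as the product of these restrictions.

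Two elementary verifications form the combinatorial skeleton. First, $\ker(\phi_\d) = \Xset(1,X)$: being in $Y(\d,X)$ with trivial restriction to each $\O_\l^\times$ for $\l \mid \d$ is equivalent to being unramified outside $\Sigma \cup (\cP_0 \cap \{\N\l < X\})$. Second, $\Xset(\d,X) = \phi_\d^{-1}(S)$ where
$$
S := \prod_{\l \mid \d}(\bmu_p \setminus \{1\}) \;\subset\; \bmu_p^m
$$
is the set of $(p-1)^m$ tuples with every coordinate nontrivial, since having nontrivial restriction to $\O_\l^\times$ is precisely the definition of being ramified at $\l$. Because $\phi_\d$ is a group homomorphism with kernel $\Xset(1,X)$, each nonempty fiber has size $|\Xset(1,X)|$, so
$$
|\Xset(\d,X)| = |S \cap \image(\phi_\d)| \cdot |\Xset(1,X)|.
$$

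The main content is to show $S \subseteq \image(\phi_\d)$, and for this I would invoke Proposition \ref{cft2}: in both the $p > 2$ and $p = 2$ cases, the image $\eta_\d(\Xset(\d,X))$ projects surjectively onto $\prod_{\l \mid \d}\Xset_\ram(K_\l)$, and composing with the natural surjection $\prod_{\l \mid \d}\Xset_\ram(K_\l) \onto S$ (each $\Xset_\ram(K_\l) \onto \bmu_p \setminus \{1\}$ with fibers of size $p$) then yields $\phi_\d(\Xset(\d,X)) = S$. The mild obstacle I anticipate is the $p = 2$ case, where Proposition \ref{cft2}(ii) only gives surjectivity onto $\Omega_\d^\pm$ rather than all of $\Omega_\d$; but $\sgnd$ depends only on the $\Sigma$-coordinates of $\omega$, so any fixed tuple of ramified characters at the primes dividing $\d$ can be completed to an element of either $\Omega_\d^+$ or $\Omega_\d^-$, and the projection to $\prod_{\l \mid \d}\Xset_\ram(K_\l)$ remains surjective. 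Plugging $|S \cap \image(\phi_\d)| = |S| = (p-1)^m$ into the identity above gives $|\Xset(\d,X)| = (p-1)^m|\Xset(1,X)|$.
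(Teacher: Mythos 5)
Your proof is correct and takes essentially the same approach as the paper's. The paper realizes the identity by fixing explicit generators $\chi_j \in \Xset(\l_j,X)$ (via Proposition \ref{cft2} applied to each single prime $\l_j$) and exhibiting a bijection $(\F_p^\times)^m \times \Xset(1,X) \isom \Xset(\d,X)$; you package the same coset count through the kernel and image of the restriction homomorphism $\phi_\d$, with the identical key input from Proposition \ref{cft2} supplying surjectivity onto the ramified components.
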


\begin{proof}
Suppose $\d = \l_1\cdots\l_m$.  For each $j$, by Proposition \ref{cft2} 
we can fix a character $\chi_j \in \Xset(\l_j,X)$ that is (necessarily 
ramified at $\l_j$ and) 
unramified outside of $\l_j$, $\Sigma$ and $\cP_0$.  Then every $\chi\in\Xset(\d,X)$ 
can be written uniquely as a product of powers of the $\chi_j$ times 
a character in $\Xset(1,X)$, so the map
$$
(\Fp^\times)^m \times \Xset(1,X) \too \Xset(\d,X)
$$
defined by $(n_1,\ldots,n_m,\psi) \mapsto \chi_1^{n_1}\cdots\chi_m^{n_m}\psi$
is a bijection.  
\end{proof}

Use the chosen convergence rate function $\Ceb$ to define $\D_{m,k,X} \subset \D$ 
as in Definition \ref{boxdef}, for $m, k \in \Z_{\ge 0}$ and $X \in \R_{>0}$.

\begin{defn}
\label{11.7}
For $m, k \ge 0$, define
$$
 \B_{m,k,X} := \coprod_{\d\in\D_{m,k,X}}\Xset(\d,\Ceb(L_{m+1}(X))) \subset \Xset(K)
$$
with $L_{m+1}(X)$ as in Definition \ref{boxdef}.  We call the collection of 
sets of characters 
$\B_{m,k,X}$ a {\em fan structure} on $\Xset(K)$.
\end{defn}

\begin{rem}
The sets $\B_{m,k,X}$ depend on $T$ and $\Sigma$, because they depend on the sets 
$\cP_0$, $\cP_1$, and $\cP_2$.  But they do not depend on the chosen twisting data.  
Thus if we take two elliptic curves $A, A'$ with $A[p] \cong A'[p]$ as $G_K$-modules, 
and take the same $\Sigma$ and $\Ceb$ for both $A$ and $A'$, 
then the sets $\B_{m,k,X}$ are the same for $A$ and $A'$.
\end{rem}

\begin{thm}
\label{6.5}
Suppose \eqref{bighyp0}, \eqref{bighyp1}, and \eqref{bighyp2} hold.
If $m, k, n \ge 0$ and $\cup_X \D_{m,k,X}$ is nonempty, then
$$
\lim_{X \to \infty}
   \frac{|\{\chi\in \B_{m,k,X} : \rk(\chi) = n\}|}{|\B_{m,k,X}|} = 
\begin{cases}
M^k(E_1^+)(n) & \text{if $k$ is even}, \\
M^k(E_1^-)(n) & \text{if $k$ is odd}.
\end{cases}
$$
\end{thm}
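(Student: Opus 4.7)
The plan is to reduce the statement to Theorem~\ref{ybox} applied to the rank data $\Omega^{S^\nu}$, where $\nu=+$ if $k$ is even and $\nu=-$ if $k$ is odd (when $p>2$, $\Omega^{S^\nu}=\Omega$, so the sign is only bookkeeping). The bridge from $\B_{m,k,X}$ to a rank-distribution average over $\D_{m,k,X}$ is supplied by Proposition~\ref{11.2} and Lemma~\ref{cft3}.

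First I would fix $\d\in\D_{m,k,X}$ and apply Proposition~\ref{11.2} to the set $\Xset(\d,\Ceb(L_{m+1}(X)))$. This is legitimate because by the recursive definition of $L_{m+1}(X)$ we have $L_{m+1}(X)\ge\Ceb(\prod_{j\le m}L_j(X))\ge\Ceb(\N\d)$, so Proposition~\ref{11.2} applies to the larger cutoff $\Ceb(L_{m+1}(X))$ as well. Since $w(\d)=k$ for every $\d\in\D_{m,k,X}$, Proposition~\ref{11.2} gives
$$
|\{\chi\in\Xset(\d,\Ceb(L_{m+1}(X))):\rk(\chi)=n\}|=E_\d^\nu(n)\cdot|\Xset(\d,\Ceb(L_{m+1}(X)))|.
$$
Next, every $\d\in\D_{m,k,X}$ has exactly $m$ prime divisors, so Lemma~\ref{cft3} gives $|\Xset(\d,\Ceb(L_{m+1}(X)))|=(p-1)^m|\Xset(1,\Ceb(L_{m+1}(X)))|$, a value independent of $\d$. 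Summing the previous identity over $\d\in\D_{m,k,X}$ and dividing by $|\B_{m,k,X}|$, the $|\Xset(\d,\cdot)|$ factors cancel and one obtains
$$
\frac{|\{\chi\in\B_{m,k,X}:\rk(\chi)=n\}|}{|\B_{m,k,X}|}
=\frac{1}{|\D_{m,k,X}|}\sum_{\d\in\D_{m,k,X}}E_\d^\nu(n).
$$

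The right-hand side is exactly $E_{\D_{m,k,X}}^\nu(n)$, the rank distribution over $\D_{m,k,X}$ associated to the rank data $\Omega^{S^\nu}$ on $\D$. Indeed, $|\Omega_\d^{S^\nu}|=|S^\nu|\cdot(p(p-1))^m$ depends only on $|\d|=m$, so the $|\Omega_\d^{S^\nu}|$-weighted average in the definition of $E_B$ agrees with the unweighted average here. Finally I would invoke Theorem~\ref{ybox} applied to the rank data $\Omega^{S^\nu}$: by Theorem~\ref{gov} this rank data is governed by $\ML$ with convergence rate $\Ceb$, and the linear bound hypothesis of Theorem~\ref{ybox} is supplied by Corollary~\ref{basiccor} (taking $b_1=1$ and $b_0=\max\{\rk(\omega'):\omega'\in\Omega_1\}<\infty$, which is finite because $\Omega_1$ is finite). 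Theorem~\ref{ybox} yields
$$
\lim_{X\to\infty}E_{\D_{m,k,X}}^\nu=M^k(E_{\delta_0}^\nu)=M^k(E_1^\nu),
$$
which, combined with the display above, is the claim.

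I do not expect a real obstacle here: the arithmetic substance is packaged in Propositions~\ref{basic}, \ref{probs}, Theorems~\ref{gov} and~\ref{cft2}, and the abstract Markov machinery of Theorem~\ref{ybox}. The only point that requires a small amount of care is matching the cutoff used to define $\B_{m,k,X}$ (namely $\Ceb(L_{m+1}(X))$) with the hypothesis $X>\Ceb(\N\d)$ of Proposition~\ref{11.2}, and verifying that $|\Omega_\d^{S^\nu}|$ really depends only on $|\d|$ so that the average $(1/|\D_{m,k,X}|)\sum E_\d^\nu$ coincides with $E_{\D_{m,k,X}}^\nu$; both are immediate from the definitions.
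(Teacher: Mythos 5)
Your proposal is correct and follows the same route as the paper's proof: decompose $\B_{m,k,X}$ into the $\Xset(\d,\Ceb(L_{m+1}(X)))$, use Lemma \ref{cft3} to see that $|\Xset(\d,\cdot)|$ is constant on $\D_{m,k,X}$, apply Proposition \ref{11.2} to convert each summand to $E_\d^{(-1)^k}(n)$, identify the resulting unweighted average with $E_{\D_{m,k,X}}^{(-1)^k}(n)$, and finish with Theorem \ref{ybox} (via Theorem \ref{gov} and Corollary \ref{basiccor}). You merely swap the order in which Proposition \ref{11.2} and Lemma \ref{cft3} are invoked and you spell out the cardinality bookkeeping a bit more explicitly; the logical content is identical to the paper's argument.
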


\begin{proof}
Let $b_m(X) := \Ceb(L_{m+1}(X))$.  
By definition of $\B_{m,k,X}$, 
$$
\frac{|\{\chi\in \B_{m,k,X} : \rk(\chi) = n\}|}{|\B_{m,k,X}|} = 
   \frac{\sum_{\d\in\D_{m,k,X}} |\{\chi\in \Xset(\d,b_m(X)) : \rk(\chi) = n\}|}{\sum_{\d\in\D_{m,k,X}}|\Xset(\d,b_m(X))|}.
$$
By Lemma \ref{cft3}, $|\Xset(\d,b_m(X))|$ is independent of $\d\in\D_{m,k}$, so 
\begin{align*}
\frac{|\{\chi\in \B_{m,k,X} : \rk(\chi) = n\}|}{|\B_{m,k,X}|} &= 
   \frac{1}{|\D_{m,k,X}|}\sum_{\d\in\D_{m,k,X}} \frac{ |\{\chi\in \Xset(\d,b_m(X)) : \rk(\chi) = n\}|}{|\Xset(\d,b_m(X))|} \\
   &= \frac{1}{|\D_{m,k,X}|}\sum_{\d\in\D_{m,k,X}}E_\d^{(-1)^k}(n)
\end{align*}
using Proposition \ref{11.2} for the final equality.
By Theorem \ref{ybox} (using Corollary \ref{basiccor} to see that the hypotheses 
of Theorem \ref{ybox} hold), as $X$ grows this converges to $M^k(E_1^{(-1)^k})(n)$.
\end{proof}

\begin{lem} 
\hfill
\begin{enumerate}
\item
If $p \nmid [K(T):K]$, then $\cup_X \D_{m,k,X}$ is nonempty if and only $k = 2m$.
\item
If $p \mid [K(T):K]$, then $\cup_X \D_{m,k,X}$ is nonempty if and only $m \le k \le 2m$.
\end{enumerate}
\end{lem}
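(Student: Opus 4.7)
A level $\d \in \D_{m,k,X}$ is, by definition, a set $\{q_1,\dots,q_m\}$ of $m$ primes drawn from $\cP_1 \cup \cP_2$, satisfying $\N(q_j) < L_j(X)$ for each $j$ and $w(\d) = \sum_j w(q_j) = k$; each individual width $w(q_j)$ lies in $\{1,2\}$. My approach is essentially a simple combinatorial count combined with Chebotarev. If $a$ of the $q_j$ lie in $\cP_1$ and $b$ of them lie in $\cP_2$, then $m = a+b$ and $k = a+2b$, so $a = 2m-k$ and $b = k-m$. The nonnegativity of $a$ and $b$ immediately yields the bound $m \le k \le 2m$, and if $\cP_1$ happens to be empty we must additionally have $a=0$, hence $k = 2m$.

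First I would verify that $\cP_1$ is empty precisely when $p \nmid [K(T):K]$. By Lemma \ref{4.2}(ii), a prime $\l \notin \Sigma$ lies in $\cP_1$ if and only if its Frobenius in $\Gal(K(T)/K)$ has order exactly $p$. By Cauchy's theorem, $\Gal(K(T)/K)$ contains such an element iff $p \mid [K(T):K]$; and in the presence of such an element, the Chebotarev theorem produces infinitely many primes $\l$ whose Frobenius lies in its conjugacy class. An analogous argument using the identity element together with Lemma \ref{4.2}(i) shows that $\cP_2$ is always infinite (of positive density).

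Next I would prove the converse direction: whenever the numerical constraints on $(m,k)$ are met, $\D_{m,k,X}$ is nonempty for all sufficiently large $X$. Set $a := 2m-k$ and $b := k-m$. The sequence $\{L_j(X)\}_{j \ge 1}$ is nondecreasing in $j$, since $L_{j+1}(X) \ge X L_j(X) \ge L_j(X)$ for $X \ge 1$, and $L_1(X) = \Ceb(X) \to \infty$ with $X$. For $X$ large enough there are at least $a$ distinct primes in $\cP_1$ and at least $b$ distinct primes in $\cP_2$ with norm below $L_1(X) \le L_j(X)$ for every $j \le m$; labeling any such $m$ primes (in any order) as $q_1,\ldots,q_m$ produces an element of $\D_{m,k,X}$.

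This is a routine synthesis of the elementary combinatorics of writing $k = a+2b$ with $a,b \ge 0$ and a qualitative application of Chebotarev, so I do not anticipate a real obstacle. The only minor detail requiring attention is the monotonicity of $L_j(X)$ in $j$ and its growth to infinity in $X$, both of which fall out of the recursive definition in Definition \ref{boxdef}.
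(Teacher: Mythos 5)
Your proposal is correct and takes essentially the same route as the paper's proof: Lemma \ref{4.2} together with Chebotarev (and, as you note, Cauchy's theorem) shows that $\cP_1$ is nonempty iff $p\mid[K(T):K]$ and $\cP_2$ is always nonempty, after which the constraint $m\le k\le 2m$ (collapsing to $k=2m$ when $\cP_1=\emptyset$) is immediate from the combinatorics of writing $k=a+2b$ with $a+b=m$, $a,b\ge 0$. The only thing you spell out that the paper leaves implicit is the verification that, for a chosen set of $m$ primes, one can take $X$ large enough that the fan constraints $\N(q_j)<L_j(X)$ are satisfied; this is a welcome clarification but not a different proof.
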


\begin{proof}
Recall that $\D_{m,k,X}$ consists of ideals $\d$ that are products of $m$ primes, with $w(\d) = k$.  

By Lemma \ref{4.2}, if $p \nmid [K(T):K]$ then $\cP_1$ is empty, so $w(\d)$ is twice 
the number of primes dividing $\d$.

If $p \mid [K(T):K]$, then $\cP_1$ and $\cP_2$ are both nonempty.  
So if $\d$ is a product of $m$ primes, then $m \le w(\d) \le 2m$.  Conversely,   
if $m \le k \le 2m$ then every $\d$ that is a product of $(2m-k)$ primes from $\cP_1$ 
and $(k-m)$ primes from $\cP_2$ will have $m$ prime factors and $w(\d) = k$.
\end{proof}

Recall the probability distributions $\Peven, \Podd$ given explicitly by Definition \ref{10.3}.

\begin{cor}
\label{11.10}
Suppose \eqref{bighyp0}, \eqref{bighyp1}, and \eqref{bighyp2} hold.
We have
\begin{multline*}
\lim_{m,k \to \infty}\lim_{X \to \infty}
   \frac{|\{\chi\in \B_{m,2k}(X) : \rk(\chi) = n\}|}{|\B_{m,2k}(X)|} \\
    = (1-\parity(E_1^+)) \Peven(n)+ \parity(E_1^+) \Podd(n),
\end{multline*}
\begin{multline*}
\lim_{m,k \to \infty}\lim_{X \to \infty}
   \frac{|\{\chi\in \B_{m,2k+1}(X) : \rk(\chi) = n\}|}{|\B_{m,2k+1}(X)|} \\
    = \parity(E_1^-) \Peven(n)+(1-\parity(E_1^-)) \Podd(n),
\end{multline*}
where the limits are over any sequence of pairs $(m,k)$ tending to infinity 
such that $\cup_X\D_{m,2k,X}$ is nonempty (for the first equality) 
and $\cup_X\D_{m,2k+1,X}$ is nonempty (for the second equality).
\end{cor}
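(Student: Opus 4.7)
The plan is to combine Theorem \ref{6.5}, which handles the inner limit over $X$, with Proposition \ref{10.5}, which handles the long-term behavior of powers of $\ML$. First I would fix an admissible pair $(m,k)$, i.e., one for which $\cup_X\D_{m,2k,X}$ is nonempty, and apply Theorem \ref{6.5} with width $2k$. Since $2k$ is even, the theorem yields
$$
\lim_{X \to \infty}\frac{|\{\chi \in \B_{m,2k,X} : \rk(\chi) = n\}|}{|\B_{m,2k,X}|} = \ML^{2k}(E_1^+)(n).
$$
The crucial observation is that the right-hand side depends only on $k$, not on $m$, so after taking this inner limit the dependence on $m$ drops out and the iterated outer limit $\lim_{m,k\to\infty}$ reduces to $\lim_{k\to\infty} \ML^{2k}(E_1^+)$.

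Second, since $E_1^+ \in \PS$ is a genuine probability distribution on $\Z_{\ge 0}$, Proposition \ref{10.5} applies directly and gives
$$
\lim_{k \to \infty} \ML^{2k}(E_1^+) = (1-\parity(E_1^+))\Peven + \parity(E_1^+)\Podd,
$$
which is exactly the first claimed identity. The second identity is obtained by repeating the argument verbatim with $\B_{m,2k+1,X}$ in place of $\B_{m,2k,X}$: the odd-width case of Theorem \ref{6.5} produces $\ML^{2k+1}(E_1^-)(n)$ as the inner limit, and the odd-power clause of Proposition \ref{10.5} then gives
$$
\lim_{k \to \infty} \ML^{2k+1}(E_1^-) = \parity(E_1^-)\Peven + (1-\parity(E_1^-))\Podd.
$$

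I do not expect a genuine obstacle here, since the two ingredients dovetail exactly: Theorem \ref{6.5} has already absorbed all the hard arithmetic content, namely the effective Cebotarev input of Theorem \ref{cebdata}, the class field theoretic passage from semi-local to global characters in \S\ref{lgc}, and the Markov averaging of Theorem \ref{ybox}, while Proposition \ref{10.5} supplies the ergodic convergence that governs the long-term behavior. The only bookkeeping requirement is that the outer limit be taken along a sequence of pairs $(m,k)$ for which $\cup_X \D_{m,2k,X}$ (respectively $\cup_X \D_{m,2k+1,X}$) is nonempty, which is exactly the hypothesis needed to invoke Theorem \ref{6.5} at each stage.
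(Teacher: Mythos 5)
Your proof is correct and is essentially identical to the paper's argument: the paper simply states that the corollary follows directly from Theorem \ref{6.5} and Proposition \ref{10.5}, and you have filled in precisely those two steps (the inner limit over $X$ from Theorem \ref{6.5}, then the $k\to\infty$ limit from Proposition \ref{10.5}, using that the inner limit is independent of $m$).
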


\begin{proof}
The corollary follows directly from Theorem \ref{6.5} and Proposition \ref{10.5}.
\end{proof}

Suppose for the rest of this section that $p=2$, 
$A$ is an elliptic curve over $K$, and $T = A[2]$ with the natural twisting 
data.  Let $\Delta \in \O_{K,\Sigma}$ be the discriminant of some model of $A$; 
by \cite[Lemma 6.3]{kmr}, this $\Delta$ satisfies Lemma \ref{4.3}(i).

\begin{defn}
If $v \in \Sigma$ and $\psi,\psi' \in \Xset(K_v)$, let 
$$
h(\psi,\psi') := \dim_{\Fp}(\alpha_v(\psi)/(\alpha_v(\psi)\cap\alpha_v(\psi')))
$$
where $\alpha_v : \Xset(K_v) \to \H(q_v)$ is given by the twisting data, 
and define
$$
\gamma_v(\psi) := (-1)^{h(\one_v,\psi)}\psi(\Delta) \in \{\pm1\},
$$
$$
\delta_v = \frac{1}{|\Xset(K_v)|}\sum_{\psi\in\Xset(K_v)}\gamma_v(\psi), \quad\text{and}\quad
\delta(A/K) := \frac{(-1)^{\rk(\one)}}{2}\prod_{v \in \Sigma} \delta_v.
$$
\end{defn}

The quantity $\delta(A/K)$ is the ``disparity'' mentioned in the introduction 
(see \cite[Theorem 7.6]{kmr}).

\begin{lem}
\label{lastlem}
Suppose that $\Gal(K(A[2])/K) \cong S_3$, and that $\Sigma$ contains a prime 
$\l \nmid 2$ where $A$ has good reduction and $\Delta \notin (K_\l^\times)^2$.
Then
$$
\textstyle
\parity(E_1^+) = \frac{1}{2}-\delta(A/K) \quad\text{and}\quad
\parity(E_1^-) = \frac{1}{2}+\delta(A/K).
$$
\end{lem}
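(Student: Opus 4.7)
The plan is to expand $\parity(E_1^\pm)$ using the identity $\parity(E_1^\pm)=\frac{1}{2}-\frac{1}{2|\Omega_1^\pm|}\sum_{\omega\in\Omega_1^\pm}(-1)^{\rk(\omega)}$ and then exploit the product structure of $\Omega_1=\prod_{v\in\Sigma}\Xset(K_v)$. The key input is Theorem \ref{kramer} applied with $\d=1$ (so $w(\d)=0$) to the pairs $\omega\in\Omega_1$ and $\omega'=\one:=(\one_v)_{v\in\Sigma}$: this gives
$$
(-1)^{\rk(\omega)}=(-1)^{\rk(\one)}\prod_{v\in\Sigma}(-1)^{h(\one_v,\omega_v)}.
$$

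Using the characteristic function $\mathbf{1}_{\Omega_1^\pm}(\omega)=(1\pm\sgnd(\omega))/2$ and the definition $\sgnd(\omega)=\prod_v\omega_v(\Delta)$, I split the sum as
$$
\sum_{\omega\in\Omega_1^\pm}\prod_v(-1)^{h(\one_v,\omega_v)}=\tfrac{1}{2}\prod_{v\in\Sigma}\Bigl(\sum_\psi(-1)^{h(\one_v,\psi)}\Bigr)\pm\tfrac{1}{2}\prod_{v\in\Sigma}\Bigl(\sum_\psi\gamma_v(\psi)\Bigr),
$$
where the two products factor because $\Omega_1$ is a direct product and the local quantities are determined place-by-place. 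The second product equals $\prod_v|\Xset(K_v)|\delta_v$ by the definition of $\delta_v$, and $|\Omega_1^\pm|=\frac{1}{2}\prod_v|\Xset(K_v)|$ (the surjectivity of $\sgnd$ follows from the existence of the distinguished $\l$). Combining and using $\delta(A/K)=\frac{(-1)^{\rk(\one)}}{2}\prod_v\delta_v$ gives the stated formulas, provided the first product vanishes.

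Thus the crux is to show that $\sum_{\psi\in\Xset(K_\l)}(-1)^{h(\one_\l,\psi)}=0$ at the distinguished prime $\l$. This is where the hypotheses enter. Since $A$ has good reduction at $\l$ and $\l\nmid 2$, the extension $K_\l(A[2])/K_\l$ is unramified, hence its Galois group is cyclic; a cyclic subgroup of $\Gal(K(A[2])/K)\cong S_3$ has order $1$, $2$, or $3$. The discriminant hypothesis $\Delta\notin(K_\l^\times)^2$ rules out orders $1$ and $3$, so $\Gal(K_\l(A[2])/K_\l)\cong\Z/2$ and hence $d_\l:=\dim_{\F_2}A(K_\l)[2]=1$. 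By Lemma \ref{countlem}, $H^1(K_\l,A[2])$ is a $2$-dimensional metabolic space with $1$-dimensional Lagrangians, and $|\Xset(K_\l)|=4$, split as two unramified and two ramified characters. For unramified $\psi$, $\alpha_\l(\psi)=\alpha_\l(\one_\l)=\Hu(K_\l,A[2])$, so $h(\one_\l,\psi)=0$; for ramified $\psi$, $\alpha_\l(\psi)\in\H_\ram(q_\l)$ meets $\Hu$ trivially, so $h(\one_\l,\psi)=1$. The four contributions are $1+1-1-1=0$, making the full product vanish.

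The main obstacle is precisely this local analysis at $\l$: one must correctly identify the Galois group of $K_\l(A[2])/K_\l$ from the good-reduction plus non-square-discriminant hypotheses, verify that $d_\l=1$ forces two unramified and two ramified characters, and confirm that the values of $h(\one_\l,\cdot)$ cancel in pairs. Everything else is bookkeeping: combining Step 1, the product factorization, and the vanishing gives
$$
\parity(E_1^\pm)=\frac{1}{2}\mp\frac{(-1)^{\rk(\one)}}{2}\prod_{v\in\Sigma}\delta_v=\frac{1}{2}\mp\delta(A/K),
$$
as required.
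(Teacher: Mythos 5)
Your proof is essentially correct and is a cleaner, more symmetric reorganization of the paper's argument. The paper proves two separate identities -- $\parity(E_1^+)+\parity(E_1^-)=1$ (via the trick of multiplying by the character $\varphi$ supported at $\l$, which swaps $\Omega_1^+$ and $\Omega_1^-$ while flipping the rank parity) and $\parity(E_1^-)-\parity(E_1^+)=2\delta(A/K)$ (via Theorem~\ref{kramer}) -- whereas you compute $\parity(E_1^\pm)$ in one stroke by expanding the indicator $\mathbf{1}_{\Omega_1^\pm}=(1\pm\sgnd)/2$ and factoring the sum over the product $\Omega_1=\prod_v\Xset(K_v)$. This has the advantage of showing transparently that the "sum" relation $\parity^++\parity^-=1$ and the "difference" relation $\parity^--\parity^+=2\delta$ are really two projections of the same two-term expansion, with the first term killed by the local vanishing $\sum_{\psi\in\Xset(K_\l)}(-1)^{h(\one_\l,\psi)}=0$ at the distinguished prime and the second term assembling to $\prod_v|\Xset(K_v)|\delta_v$. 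Your identification of the local Galois structure at $\l$ (unramified hence cyclic in $S_3$, $\Delta\notin(K_\l^\times)^2$ forcing order $2$ and so $d_\l=1$) is correct, as is the count of two unramified and two ramified characters.

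One step is asserted rather than proved: for ramified $\psi$ at the good-reduction prime $\l\nmid 2$, you claim $\alpha_\l(\psi)\in\H_\ram(q_\l)$, i.e.\ that the Kummer image of the ramified quadratic twist is the ramified Lagrangian. This is genuine arithmetic input about elliptic curves and does not follow from the formal twisting-data axioms (Definition~\ref{twistdata} imposes no ramification condition on $\alpha_v$ for $v\in\Sigma$). It is exactly the content of Kramer's result $(-1)^{h(\one_\l,\psi)}=\psi(\Delta)$, which the paper invokes as \cite[Proposition 3]{kramer}; alternatively it is implicit in \cite[Proposition 5.9]{kmr}. You should cite one of these for that step. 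With that citation supplied, your argument is complete and correct.
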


\begin{proof}
We will show that $\parity(E_1^+) + \parity(E_1^-) = 1$ and $\parity(E_1^-) - \parity(E_1^+) = 2\delta(A/K)$.

Since $|\Omega_1^+| = |\Omega_1^-| = |\Omega_1|/2$, we have
\begin{align*}
\parity(E_1^+) + \parity(E_1^-) 
   &= \frac{|\{\omega\in\Omega_1^+ : \text{$\rk(\omega)$ is odd}\}|}{|\Omega_1^+|}
   + \frac{|\{\omega\in\Omega_1^- : \text{$\rk(\omega)$ is odd}\}|}{|\Omega_1^-|} \\
   &= 2\;\frac{|\{\omega\in\Omega_1 : \text{$\rk(\omega)$ is odd}\}|}{|\Omega_1|}.
\end{align*}
Let $\l$ be as in the statement of the lemma, and fix $\varphi\in\Omega_1$ such that 
$\varphi_\l(\Delta) = -1$, and $\varphi_v = \one_v$ if $v \ne \l$.  Then multiplication 
by $\varphi$ permutes the elements of $\Omega_1$.

If $\omega \in \Omega_1$ then by 
Theorem \ref{kramer} (for the first congruence) and \cite[Lemma 5.6]{kmr} applied to the Lagrangian subspaces
$\alpha_v(\one_\l)$, $\alpha_v(\omega_\l)$, and $\alpha_v(\omega_\l\varphi_\l)$ 
(for the second congruence) we have
\begin{equation}
\label{final}
\rk(\omega\varphi) - \rk(\omega) \equiv h(\omega_\l,\omega_\l\varphi_\l) 
   \equiv h(\one_\l,\omega_\l) + h(\one_\l,\omega_\l\varphi_\l) 
   \pmod{2}.
\end{equation}
By \cite[Proposition 3]{kramer} we have 
$$
(-1)^{h(\one_\l,\omega_\l)} = \omega_\l(\Delta), \quad  
(-1)^{h(\one_\l,\omega_\l\varphi_\l)} = \omega_\l\varphi_\l(\Delta) = -\omega_\l(\Delta),
$$
so the right-hand side of \eqref{final} is odd.
Therefore $\rk(\omega)$ is odd for exactly half of the $\omega \in \Omega_1$, 
and we conclude that $\parity(E_1^+) + \parity(E_1^-) = 1$.

By Theorem \ref{kramer}, if $\omega \in\Omega_1$ we have 
$$
(-1)^{\rk(\one)+\rk(\omega)} = \prod_{v\in\Sigma}(-1)^{h(\one_v,\omega_v)} 
   = \sgn(\omega)\prod_{v\in\Sigma}\gamma_v(\omega_v).
$$
Therefore
$$
\text{$\rk(\omega)$ is odd} \iff 
\begin{cases}
\text{$\omega\in\Omega_1^+$ and $\prod_{v\in\Sigma}\gamma_v(\omega_v) \neq (-1)^{\rk(\one)}$, or} \\
\text{$\omega\in\Omega_1^-$ and $\prod_{v\in\Sigma}\gamma_v(\omega_v) = (-1)^{\rk(\one)}$.}
\end{cases}
$$
Thus
\begin{align*}
\parity(E_1^-) - &\parity(E_1^+)
   = \frac{|\{\omega\in\Omega_1^- : \text{$\rk(\omega)$ is odd}\}|}{|\Omega_1^-|}
   - \frac{|\{\omega\in\Omega_1^+ : \text{$\rk(\omega)$ is odd}\}|}{|\Omega_1^+|} \\
   &= \sum_{\omega\in\Omega_1^-}\frac{1+(-1)^{\rk(\one)}\prod_{v\in\Sigma}\gamma_v(\omega_v)}{|\Omega_1|}
      - \sum_{\omega\in\Omega_1^+}\frac{1-(-1)^{\rk(\one)}\prod_{v\in\Sigma}\gamma_v(\omega_v)}{|\Omega_1|} \\
   &= (-1)^{\rk(\one)}\frac{\sum_{\omega\in\Omega_1}\prod_{v\in\Sigma}\gamma_v(\omega_v)}{|\Omega_1|}
   = 2\delta(A/K).
\end{align*}
This proves the lemma.
\end{proof}

\begin{rem}
The assumption in Lemma \ref{lastlem} and Corollary \ref{11.11} below 
that $\Sigma$ contains a prime $\l \nmid 2$ where $A$ 
has good reduction and $\Delta \notin (K_\l^\times)^2$ can always be satisfied by adding to 
$\Sigma$ any prime in $\cP_1$.
\end{rem}

\begin{cor}
\label{11.11}
Suppose that $\Gal(K(A[2])/K) \cong S_3$, and that $\Sigma$ contains a prime $\l \nmid 2$ where $A$ 
has good reduction and $\Delta \notin (K_\l^\times)^2$.
Let $\B_m(X) := \cup_k \B_{m,k,X}$ with $\B_{m,k,X}$ as in Definition \ref{11.7}.
Then for every $n \ge 0$ we have
$$
\lim_{m \to \infty}\lim_{X \to \infty}
   \frac{|\{\chi\in \B_{m}(X) : \rk(\chi) = n\}|}{|\B_{m}(X)|} 
\textstyle
    = (\frac{1}{2}+\delta(A/K)) \Peven(n)+ (\frac{1}{2}-\delta(A/K)) \Podd(n).
$$
\end{cor}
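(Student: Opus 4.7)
The plan is to decompose $\B_m(X)$ by width $k$ and reduce to Corollary \ref{11.10}, exploiting the fact that Lemma \ref{lastlem} collapses the two parity cases there to the same limiting distribution. Specifically, since $p=2$ and $\Gal(K(A[2])/K) \cong S_3$, the prime $p$ divides $[K(A[2]):K] = 6$, so by the lemma preceding Corollary \ref{11.10}, $\bigcup_X \D_{m,k,X}$ is nonempty precisely when $m \le k \le 2m$. Hence for $X$ large enough, $\B_m(X) = \coprod_{k=m}^{2m} \B_{m,k,X}$ as a disjoint union, and the ratio of interest is a convex combination
\begin{equation*}
\frac{|\{\chi \in \B_m(X) : \rk(\chi) = n\}|}{|\B_m(X)|} \;=\; \sum_{k=m}^{2m} \lambda_{m,k,X}\,R_{m,k,X},
\end{equation*}
where $\lambda_{m,k,X} := |\B_{m,k,X}|/|\B_m(X)|$ and $R_{m,k,X}$ denotes the rank-$n$ proportion inside $\B_{m,k,X}$.

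Next, I would apply Theorem \ref{6.5} to obtain $\lim_{X \to \infty} R_{m,k,X} = \ML^k(E_1^{(-1)^k})(n)$, and then Proposition \ref{10.5} to compute the $k \to \infty$ limit of $\ML^k(E_1^\pm)(n)$ in terms of $\parity(E_1^\pm)$, $\Peven$, and $\Podd$. The decisive input is Lemma \ref{lastlem}: the identities $\parity(E_1^+) = \tfrac{1}{2} - \disparity(A/K)$ and $\parity(E_1^-) = \tfrac{1}{2} + \disparity(A/K)$, when substituted into the two formulas from Proposition \ref{10.5}, cause both parity limits to collapse to the single distribution
\begin{equation*}
L(n) := (\tfrac{1}{2}+\disparity(A/K))\Peven(n) + (\tfrac{1}{2}-\disparity(A/K))\Podd(n),
\end{equation*}
which is precisely the target of the corollary.

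Finally, I would close with a uniform-convergence estimate that bypasses any need to control the weights $\lambda_{m,k,X}$ explicitly. Given $\varepsilon > 0$, choose $k_0$ so that $|\ML^k(E_1^{(-1)^k})(n) - L(n)| < \varepsilon$ for all $k \ge k_0$. For $m \ge k_0$ every $k$ appearing in the sum satisfies $k \ge m \ge k_0$, so for $X$ sufficiently large (depending on $m$) each $R_{m,k,X}$ lies within $2\varepsilon$ of $L(n)$; the convex combination then lies in $[L(n)-2\varepsilon,\,L(n)+2\varepsilon]$ no matter what the weights are. Letting $X \to \infty$ and then $m \to \infty$, the iterated limit equals $L(n)$. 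The main technical point is exactly that we do not need to show that the individual weights $\lambda_{m,k,X}$ converge as $X \to \infty$ for fixed $m$: the coincidence of the two parity limits forced by Lemma \ref{lastlem} renders the stratification by $k$ asymptotically invisible, so the intricate distribution of $|\B_{m,k,X}|$ across $k$ cannot affect the answer.
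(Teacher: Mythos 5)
Your proof is correct and follows the same route as the paper: decompose $\B_m(X)$ into the slices $\B_{m,k,X}$, apply Theorem~\ref{6.5} to each slice, and observe that Lemma~\ref{lastlem} makes the even-$k$ and odd-$k$ limits from Proposition~\ref{10.5}/Corollary~\ref{11.10} coincide at $(\tfrac12+\delta(A/K))\Peven+(\tfrac12-\delta(A/K))\Podd$. The one thing you add is the explicit $\varepsilon$-argument showing the weights $\lambda_{m,k,X}$ are irrelevant once the slice limits agree; the paper's one-line proof tacitly assumes exactly this, so your write-up is a welcome clarification rather than a detour. (A small pedantic caveat: your argument as stated shows $\liminf_X$ and $\limsup_X$ of the ratio both approach $L(n)$ as $m\to\infty$; to assert the iterated limit literally, one should either note that this is sufficient, or observe via Lemma~\ref{cft3} that $\lambda_{m,k,X}=|\D_{m,k,X}|/|\D_m(X)|$ converges as $X\to\infty$ so the inner limit does exist.)
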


\begin{proof}
This follows directly from Corollary \ref{11.10} and Lemma \ref{lastlem}, 
since 
$$
\phantom{\Box}\hskip 98.76pt
1-\parity(E_1^+) = \parity(E_1^-) = \frac{1}{2}+\delta(A/K).
\hskip 98.76pt\Box
$$
\renewcommand{\qedsymbol}{}
\end{proof}

\end{document}